\newtheorem{theorem}{Theorem}
\newtheorem{corollary}[theorem]{Corollary}
 \newtheorem{lemma}[theorem]{Lemma}
\newtheorem{definition}[theorem]{Definition}
\newenvironment{proof}{\begin{trivlist}
    \item[\hskip\labelsep{\bf Proof.}]}{$\hfill\Box$\end{trivlist}}
\newcommand{\Vol}{{\rm Vol}}
\newcommand{\real}{\mathbb{R}}
\newcommand{\Pcal}{\mathcal{P}}
\newcommand{\rd}{\, {\rm d}}
\newcommand{\cS}{S}
\newcommand{\bsd}{\boldsymbol{d}}
\newcommand{\bsx}{\boldsymbol{x}}
\newcommand{\bsy}{\boldsymbol{y}}
\newcommand{\bsh}{\boldsymbol{h}}
\newcommand{\bsj}{\boldsymbol{j}}
\newcommand{\bsk}{\boldsymbol{k}}
\newcommand{\bsl}{\boldsymbol{l}}
\newcommand{\bsr}{\boldsymbol{r}}
\newcommand{\bsell}{\boldsymbol{l}}
\newcommand{\bsa}{\boldsymbol{a}}
\newcommand{\bsq}{\boldsymbol{q}}
\newcommand{\bsp}{\boldsymbol{p}}
\newcommand{\bszero}{\boldsymbol{0}}
\newcommand{\bsgamma}{\boldsymbol{\gamma}}
\newcommand{\bsalpha}{\boldsymbol{\alpha}}
\newcommand{\bsone}{\boldsymbol{1}}
\newcommand{\uu}{u}
\newcommand{\walb}{{\rm wal}}
\title{Quasi-Monte Carlo numerical integration on $\mathbb{R}^s$: digital nets and worst-case error}
\author{Josef Dick\footnote{School of Mathematics and Statistics, The University of New South Wales, NSW 2052, Australia; email: josef.dick@unsw.edu.au}}
\begin{document}

\maketitle

\begin{abstract}
Quasi-Monte Carlo rules are equal weight quadrature rules defined
over the domain $[0,1]^s$. Here we introduce quasi-Monte Carlo type
rules for numerical integration of functions defined on
$\mathbb{R}^s$. These rules are obtained by way of some
transformation of digital nets such that locally one obtains qMC
rules, but at the same time, globally one also has the required
distribution. We prove that these rules are optimal for numerical
integration in spaces of bounded fractional variation. The analysis
is based on certain tilings of the Walsh phase plane. Numerical
results demonstrate the efficiency of the method.
\end{abstract}

\noindent
{\bf Key words.} numerical integration, quasi-Monte Carlo, digital net, Walsh model

\noindent
{\bf AMS subject classifications.} 11K38, 11K45, 65C05

\section{Introduction}

Traditionally, quasi-Monte Carlo (qMC) rules are equal weight quadrature formulae defined on $[0,1]^s$, i.e.,
\begin{equation*}
Q_{P}(f) = \frac{1}{N} \sum_{n=0}^{N-1} f(\bsx_n),
\end{equation*}
where $P= \{\bsx_0,\ldots, \bsx_{N-1}\} \subset [0,1]^s$ are quadrature points. These rules are used to approximate integrals of the form
\begin{equation*}
\int_{[0,1]^s} f(\bsx) \, {\rm d} \bsx,
\end{equation*}
see \cite{DP09, niesiam} for more information.

In practice, however, the integrals one needs to approximate are usually over domains other than $[0,1]^s$, often $\mathbb{R}^s$. In order for qMC rules to be used in this case, one requires a transformation mapping from $\mathbb{R}^s$ to $[0,1]^s$ \cite{KSWW}. This step is of great importance, indeed different transformations can yield very different results \cite{KDSWW}. Furthermore, usually known theory does not predict which transformations from $\mathbb{R}^s$ to $[0,1]^s$ will yield the best results~\cite{KDSWW}.

To circumvent this problem, we introduce quasi-Monte Carlo type
rules for $\mathbb{R}^s$. Admittedly, there are still choices to be
made by the user of our method. In fact, one could view the approach
proposed here as a discretized transformation, where the transformed
point set yields locally qMC rules, but globally has a given
distribution.

To understand what properties a point set locally and globally must
satisfy, we analyze the worst-case error of numerical integration of
functions on $\mathbb{R}^s$ with bounded fractional variation of order $\alpha$ and
which satisfy a decay condition as one moves away from the origin.
Roughly speaking, if $\alpha = 1$ then the functions we consider have square integrable partial mixed derivatives up to order one in each coordinate. That is, for a partition $\mathcal{P}_{\mathbb{R}^s}$ of $\mathbb{R}^s$ into subintervals $J=\prod_{i=1}^s [a_i, b_i)$ we have
\begin{equation*}
\sum_{J \in \mathcal{P}_{\mathbb{R}^s}} V^2_J(f) <\infty,
\end{equation*}
where $$V_J(f) = \left(\int_J \left|\frac{\partial^s f}{\partial x_1
\cdots \partial x_s} \right|^2\,\mathrm{d} \boldsymbol{x}
\right)^{1/2}.$$ The value $V_{J}(f)$ depends on the function as
well as the location $J$. We assume that for $J=\prod_{i=1}^s
[a_i,b_i)$ we have $V_{J}(f) \rightarrow 0$ as $\max_{1\le i \le s}
\min(|a_i|,|b_i|) \rightarrow \infty$ with a certain rate of decay.
Furthermore, also the function $f$ itself must satisfy a certain
rate of decay as $\|\bsx\| \rightarrow \infty$ (where $\|\cdot \|$
denotes some norm in $\mathbb{R}^s$).

For these functions, we now construct quadrature rules for which one obtains an integration error bounded above by $N^{-\alpha} (\log N)^{c(s,\alpha)}$ (for some $c_{s,\alpha} > 0$ depending only on $s$ and $\alpha$), where $N$ denotes the number of quadrature points and $s$ the dimension. The focus in this paper is on the asymptotic convergence rate as $N \rightarrow \infty$. The study of tractability \cite{NovWoz} is left for future work.

The design of optimal quadrature rules depends on several properties
of the functions considered. One is the smoothness of the functions.
If the function is concentrated on a bounded region, then the
quadrature rule should be able to integrate functions with
smoothness $\alpha$ locally with sufficient accuracy. In other
words, one needs enough flexibility to have optimal quadrature rules
locally. Further, since we can only use a finite number of
quadrature points $N$, for fixed $N$ we can only cover a finite
region. Here the assumption of the decay of $f$ and the condition on
$V_J(f)$ comes in. It tells us where to concentrate our efforts. The
local property allows us to design quadrature rules which are
optimal over subcubes of the form $\prod_{i=1}^s [a_i, b_i]$, the
global property tells us how important each of those subcubes is and
how their importance is distributed in the space $\mathbb{R}^s$. For
example, assume that $f$ and $V_{J}(f)$ decay at least like
$\prod_{i=1}^s (1+ |x_i|)^{-1}$ as $\|\bsx\| \rightarrow \infty$,
where $\bsx = (x_1,\ldots, x_s)$. Then the most important subcubes
are arranged in a hyperbolic cross around the origin.

Below we show how one can map a digital net defined on $[0,1]^s$ such that in each of the subcubes, inside this hyperbolic cross region, one obtains a digitally shifted digital net, and at the same time, the density of the points in each subcube decreases according to the rate of decay of $f$ and $V_{J}(f)$. Our approach is flexible enough to also be able to handle other rates of decay of $f$ and $V_{J}(f)$. Another way to think about this procedure is the following: discretize the transformation from $[0,1]^s$ to $\mathbb{R}^s$ such that a digital net in $[0,1]^s$ is mapped to several nets in subcubes of $\mathbb{R}^s$, such that the subcubes and number of points therein are distributed according to a given decay rate.

The cost of constructing the point set in $\mathbb{R}^s$ is similar
to the cost of constructing the underlying digital net over a finite
field $\mathbb{Z}_b$, assuming that the one-dimensional projections
have already been defined. To be more precise, instead of using a
mapping $\mathbb{Z}_b^m \mapsto [0,1)$ as is done for digital nets,
one uses a look-up table which defines a mapping $\mathbb{Z}_b^m
\mapsto \mathbb{R}$. The cost of constructing the vectors in
$\mathbb{Z}_b^m$ is the same for digital nets in $[0,1)^s$ and in
$\mathbb{R}^s$. The one-dimensional projections on the other hand
need to be carefully designed in advance, as we illustrate in
Section~\ref{sec_examples_all}. Since those are designed in advance,
we do not count it towards the construction cost of an individual
point set.

A numerical test in Matlab, see Section~\ref{sec_numresult}, reveals
that the computation using the method introduced here is considerably faster
than using digital nets and the inverse normal cumulative
distribution function (in the numerical example considered here, the proposed method is more than ten times faster). The advantage is that in the proposed method
one does not need to compute the inverse normal cumulative
distribution function, which can be time consuming.

Conceivably one could also create the global property by hand: Take a number of digital nets $P_1,\ldots, P_K$ (for instance from a digital sequence),  partition $\mathbb{R}^s$ into subcubes $C_1,\ldots, C_K$, and put the digital net $P_k$ in $C_k$ for $1 \le k \le K$. In this case, the global property is simply designed by hand (or even adaptively). This method can be become very involved if the dimension $s$ is large. Our approach on the other hand builds on a tensor product structure, which simplifies matters, especially in high dimensions. To fit, or nearly fit, the integrand to such a structure it may be advantageous
to use the method proposed here in conjunction with some variance
reduction method like principal component analysis, Brownian bridge
constructions or similar methods \cite{Lem}.

We give an overview of the paper. The analysis of the integration
error is based on time-frequency analysis. To that end, we introduce
tilings of the Walsh phase plane \cite{Thiele,Thiele2}, which gives
us the flexibility to consider subcubes of different size and adjust
them to the rate of decay of the function $f$ and the variation
$V_{J}(f)$. This is done in Section~\ref{sec_Walshmodel}. In
Section~\ref{sec_decay_Walsh} we analyze the rate of decay of the
Walsh coefficients based on the smoothness of $f$, the rate of decay
of $f$, and, roughly speaking, the rate of decay of $V_{J}(f)$.

In Section~\ref{sec_num_int} we analyze the integration error: We prove an upper bound on the integration error which consists of two parts. One part deals with the remainder, that is, the region where there are no quadrature points. For this part one relies on the the rate of decay of $f$ to obtain an upper bound. The other part is concerned with the integration error one commits when using a quadrature rule. For this part we use estimates of the integration error for qMC rules based on digital nets.

The construction of the quadrature points is introduced in
Section~\ref{sec_construction}. We show how one can map a digital
net in $[0,1]^s$ into subcubes of $\mathbb{R}^s$ such that each
subcube contains a digitally shifted digital net, where the size of
each digitally shifted digital net is according to a given
distribution. In Section~\ref{sec_error_bound} we give upper bounds
on the integration error when one uses the construction of
Section~\ref{sec_construction}. We show that under certain decay
rates of $f$ and $V_{J}(f)$, the convergence of the integration
error is optimal up to some power of $\log N$. In
Section~\ref{sec_examples} we provide three concrete examples of how
our bounds can be used in a few situations. In
Section~\ref{sec_numresult} we represent some numerical results
which demonstrate the efficiency of the method.


For the convenience of the reader we introduce some notation used throughout the paper in the following subsection.

\subsubsection*{Notation}

For $s \ge 1$ let $S = \{1,\ldots, s\}$. For $u \subseteq S$ we denote by $|u|$ the number of elements in $u$.

The set of complex number is denoted by $\mathbb{C}$, the set of
real numbers is denoted by $\mathbb{R}$, the set of integers by
$\mathbb{Z}$, the set of natural numbers by $\mathbb{N}$, the set of
nonnegative integers by $\mathbb{N}_0$, and the finite field of
prime order $b$ by $\mathbb{Z}_b$. Further let $\mathbb{R}_0^+ = \{x
\in \mathbb{R}: x \ge 0\}$ and $\mathbb{R}^+ = \{x \in \mathbb{R}: x
> 0\}$. For $c \in \mathbb{C}$ we write $\overline{c}$ for the
complex conjugate of $c$. For $k, l \in \mathbb{Z}$ we write $k | l$
if $k$ divides $l$.

We always write vectors $\bsh = (h_1,\ldots, h_s), \bsj = (j_1,\ldots, j_s), \bsk = (k_1,\ldots, k_s)$, and so on. Further we set $\bszero = (0,\ldots, 0)$ and $\boldsymbol{1} = (1,\ldots, 1)$ - the dimension of these vectors is apparent from the context in which they occur. Further, for $u \subseteq S$ we set $\bsh_u = (h_i)_{i\in u}$ and $(\bsh_u, \bszero)$ is the vector whose $i$th coordinate is $h_i$ for $i \in u$ and $0$ otherwise. For short we write $b^{\bsj} = (b^{j_1}, \ldots, b^{j_s})$ and $b^{-\bsj} = (b^{-j_1}, \ldots, b^{-j_s})$. For vectors $\bsk, \bsl$ in $\mathbb{R}^s$, $\mathbb{Z}^s$, $\mathbb{N}^s$, and so on, we set $\bsk \star \bsl = (k_1 l_1, \ldots, k_s l_s)$. In particular we write $b^{\bsj} \star \bsl = (b^{j_1} l_1, \ldots, b^{j_s} l_s)$. Further, for $\bsl = (l_1,\ldots, l_s)$ we set
\begin{equation*}
|\bsl|_1 = |l_1| + \cdots + |l_s|.
\end{equation*}
Further we write $|\bsx|_\infty = \max_{1 \le i \le s} |x_i|$ and $|\bsx|
= (|x_1|, \ldots, |x_s|)$.

For $\bsr \in \mathbb{N}_0^s$ we set
\begin{equation*}
u_{\bsr} = \{i \in S: r_i \neq 0\}.
\end{equation*}

We define the $L_2$ inner product for $f, g \in L_2(\mathbb{R}^s)$ as usual by
\begin{equation*}
\langle f, g \rangle_{L_2} = \int_{\mathbb{R}^s} f(\bsx) \overline{g(\bsx)} \, {\rm d} \bsx.
\end{equation*}
We say that $f$ is orthogonal to $g$ if $\langle f, g \rangle_{L_2} = 0$.

For $J \subset \mathbb{R}^s$ we define the characteristic function by
\begin{equation*}
1_{J}(\bsx) = \left\{\begin{array}{ll} 1 & \mbox{if } \bsx \in J, \\ 0 & \mbox{otherwise}. \end{array} \right.
\end{equation*}

Let $P = \{\bsx_0,\ldots, \bsx_{N-1}\} \subset \mathbb{R}^s$, $\Lambda = \{\lambda_0, \ldots, \lambda_{N-1}\} \subset \mathbb{R}$. We define the quadrature rule
\begin{equation*}
Q_{P,\Lambda}(f) = \sum_{n=0}^{N-1} \lambda_n f(\bsx_n).
\end{equation*}

For $k \in \mathbb{N}$ with base $b$ representation $k = \kappa_0 +
\kappa_1 b + \cdots + \kappa_{a-1} b^{a-1}$, we define the vector $\vec{k} =
(\kappa_0,\ldots, \kappa_{m-1})^\top \in \mathbb{Z}_b^m$, where we
set $\kappa_{a+1} = \cdots = \kappa_{m-1} = 0$ if $a < m$.

\section{The Walsh model}\label{sec_Walshmodel}

For the convenience of the reader we repeat some elementary results
concerning the Walsh model, see \cite{Thiele, Thiele2} for more
detailed information, which is based on Walsh functions, see
\cite{Fine, Walsh}. Let $b \ge 2$ be an integer, $k = \kappa_0 + b
\kappa_1 + \cdots + \kappa_{a} b^{a} \in \mathbb{N}_0$, $x = x_c
b^{c} + x_{c-1} b^{c-1} + \cdots \in \mathbb{R}$ for some $a, c \in
\mathbb{N}_0$, be the base $b$ representations of $k$ and $x$. Then
we define the $k$th Walsh function by
\begin{equation*}
{\rm wal}_k(x) = \omega_b^{\kappa_0 x_{-1} + \cdots + \kappa_a x_{-a-1}} \, 1_{[0,1)}(x),
\end{equation*}
where $\omega_b = {\rm e}^{2 \pi {\rm i}/b}$.

We define translations and dilations of ${\rm wal}_k$. For $j, l \in \mathbb{Z}$ let
\begin{equation*}
w_{j,k,l} = b^{-j/2} {\rm wal}_{k}(b^{-j} x - l).
\end{equation*}
Note that the support of $w_{j,k,l}$ is given by $[b^j l, b^j (l+1))$.

The system $\{w_{j,k,l}: k \in \mathbb{N}_0, j, l \in \mathbb{Z}\}$
is overdetermined in $L_2(\mathbb{R})$. Nonetheless, one can
identify subsets which are complete orthonormal systems in
$L_2(\mathbb{R})$.

Let $j,k,l$ be integers with $k \ge 0$. To each function $w_{j,k,l}$ there corresponds a tile $T$ given by
\begin{equation*}
T = T_{j,k,l} = [ b^j l, b^j (l+1)) \times [b^{-j} k, b^{-j} (k+1)).
\end{equation*}

For the convenience of the reader we prove some results concerning
the orthogonality and completeness of Walsh functions. See also
\cite{Thiele} and the references therein, where these results and
further information can be found. The following results will be
sufficient for our purposes here.

\begin{lemma}\label{lem_orthogonal}
Let $j, j', k, k', l, l'$ be integers such that $k, k' \ge 0$. Then $w_{j,k,l}$ is orthogonal to $w_{j', k', l'}$ if and only if $T_{j,k,l} \cap T_{j', k', l'} = \emptyset$.
\end{lemma}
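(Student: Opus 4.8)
The plan is to compute the inner product $\langle w_{j,k,l}, w_{j',k',l'}\rangle_{L_2}$ directly and to show it is nonzero precisely when the two tiles meet. Since the claim is symmetric in the two index triples, I would first assume without loss of generality that $j \le j'$. The spatial supports $[b^j l, b^j(l+1))$ and $[b^{j'}l', b^{j'}(l'+1))$ are both $b$-adic intervals, hence either disjoint or nested, the smaller being that of $w_{j,k,l}$. If they are disjoint the two functions have disjoint support, so the inner product vanishes, and the spatial edges of the tiles are already disjoint; this case is immediate. The substance is therefore the nested case $[b^j l, b^j(l+1)) \subseteq [b^{j'}l', b^{j'}(l'+1))$, where orthogonality must be decided from the frequency data alone.

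In the nested case I would substitute $y = b^{-j}x - l$ to rewrite the integral, up to a nonzero prefactor $b^{-t/2}$, as $\int_0^1 {\rm wal}_k(y)\,\overline{{\rm wal}_{k'}(b^{-t}(y+\rho))}\,{\rm d}y$, where $t = j'-j \ge 0$. The spatial nesting forces $l' = \lfloor l/b^t\rfloor$, and a short computation shows the translation $l'$ cancels, leaving $\rho = l - b^t l' \in \{0,\ldots, b^t-1\}$. The crux is then a digit-level identity: writing $y$ and $\rho$ in base $b$ and using the definition of ${\rm wal}_{k'}$, the low $t$ digits of $b^{-t}(y+\rho)$ come from $\rho$ and contribute only a constant phase $\omega_b^{C}$ independent of $y$, while the remaining digits reproduce the digits of $y$ shifted by $t$. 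This yields ${\rm wal}_{k'}(b^{-t}(y+\rho)) = \omega_b^{C}\,{\rm wal}_{k''}(y)$ with $k'' = \lfloor k'/b^t\rfloor$.

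With this identity in hand, the classical orthonormality of the Walsh system on $[0,1)$, $\int_0^1 {\rm wal}_k(y)\overline{{\rm wal}_{k''}(y)}\,{\rm d}y = \delta_{k,k''}$ (itself a consequence of the geometric character sum $\sum_{\zeta=0}^{b-1}\omega_b^{c\zeta} = b$ if $b \mid c$ and $0$ otherwise), finishes the computation: the inner product is nonzero iff $k = \lfloor k'/b^t\rfloor$. It remains to read this condition off the tiles. The frequency edges $[b^{-j}k, b^{-j}(k+1))$ and $[b^{-j'}k', b^{-j'}(k'+1))$ are also $b$-adic; scaling by $b^j$ turns them into $[k,k+1)$ and $[b^{-t}k', b^{-t}(k'+1))$, and the smaller is nested in the larger exactly when $\lfloor k'/b^t\rfloor = k$. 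Thus in the nested case the inner product is nonzero iff the frequency edges meet, and since the spatial edges already meet, iff the tiles intersect. The degenerate case $j=j'$ ($t=0$) collapses to the same bookkeeping and reduces directly to orthonormality on a common unit interval.

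I expect the digit-level identity to be the main obstacle: one must track carefully how multiplication by $b^{-t}$ shifts the base-$b$ expansion and confirm that the contribution of $\rho$ factors out as a single unimodular constant independent of $y$, including when $l$ is negative, where the floor in $l' = \lfloor l/b^t\rfloor$ and hence the reduction $\rho = l - b^t l' \in \{0,\ldots,b^t-1\}$ must be applied consistently. Once that identity is secured, the rest is the standard dichotomy that $b$-adic intervals are either nested or disjoint, applied separately to the spatial and frequency edges of the tiles.
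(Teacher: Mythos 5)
Your proposal is correct and follows essentially the same route as the paper: reduce to the case $j\le j'$, dispose of disjoint spatial supports, substitute to bring the integral onto $[0,1)$, factor out a unimodular constant coming from the translation, identify the remaining factor as $\walb_{\lfloor k'/b^{j'-j}\rfloor}$, and invoke orthonormality of the Walsh system together with the nested-or-disjoint dichotomy for $b$-adic intervals to match the condition $k=\lfloor k'/b^{j'-j}\rfloor$ with intersection of the frequency edges. The digit-level identity you flag as the main obstacle is exactly the step the paper carries out via the identity $\walb_{k'}(yb^{j-j'}+b^{j-j'}l-l')=\walb_{k'}(b^{j-j'}l-l')\,\walb_{\lfloor k'b^{j-j'}\rfloor}(y)$, and your description of it is accurate.
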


\begin{proof}
Assume that $$T_{j,k,l} \cap T_{j',k',l'} = \emptyset.$$ If $$[b^j l, b^j (l+1)) \cap [b^{j'} l', b^{j'} (l'+1)) = \emptyset,$$ then the functions $w_{j,k,l}$ and $w_{j',k',l'}$ have disjoint support and hence are orthonormal. Assume now that $$[b^j l, b^j (l+1)) \cap [b^{j'} l', b^{j'} (l'+1)) =  [\max\{b^{j}l, b^{j'}l'\}, \min\{b^j(l+1), b^{j'} (l'+1)\}) \neq \emptyset$$ and
\begin{equation}\label{kint}
[b^{-j} k, b^{-j} (k+1)) \cap [b^{-j'} k', b^{-j'} (k'+1)) = \emptyset.
\end{equation}

We consider the case $j \le j'$ (the other case can be shown
analogously). Then $[b^{-j} k, b^{-j}(k+1)) \subseteq [b^{-j'}k', b^{-j'}(k'+1))$ and therefore $$\max\{b^jl, b^{j'} l'\} = b^j l$$ and
$$\min\{b^j(l+1), b^{j'}(l'+1)\} = b^j (l+1).$$ Therefore, using the
substitution $x = b^j (y+l)$, we obtain
\begin{eqnarray}\label{innerproductww}
\int_{\mathbb{R}} w_{j,k,l}(x) \overline{w_{j',k', l'}(x)} \, {\rm
d} x & = & b^{-j/2 - j'/2} \int_{b^j l}^{b^{j} (l+1)} {\rm
wal}_k(b^{-j} x - l)
\overline{{\rm wal}_{k'}(b^{-j'} x - l')} \, {\rm d} x \nonumber \\
& = & b^{(j-j')/2} \int_0^1 {\rm wal}_k(y) \, \overline{{\rm
wal}_{k'}(y b^{j-j'} + b^{j-j'} l - l')} \, {\rm d} y \nonumber \\ &
= & b^{(j-j')/2} \overline{{\rm wal}_{k'}(b^{j-j'} l - l')} \int_0^1
{\rm wal}_{k}(y) \, \overline{{\rm wal}_{k'}(y b^{j-j'})} \, {\rm d}
y \nonumber \\ & = & b^{(j-j')/2} \overline{{\rm wal}_{k'}(b^{j-j'}
l - l')} \int_0^1 {\rm wal}_{k}(y) \, \overline{{\rm wal}_{\lfloor
k' b^{j-j'} \rfloor}(y)} \, {\rm d} y.
\end{eqnarray}

From \eqref{kint} it follows for $j \le j'$ we either have $b^{-j'}
k' \ge b^{-j} (k+1)$ or $b^{-j'}(k'+1) \le b^{-j} k$. In both cases
we have $k \neq \lfloor k' b^{j-j'} \rfloor$ and hence
\eqref{innerproductww} yields $0$.

Now assume that $T_{j,k,l} \cap T_{j',k',l'} \neq \emptyset$. Then the support of $w_{j,k,l}$ and $w_{j',k',l'}$ is not disjoint. Using the same arguments as above we arrive at \eqref{innerproductww}. Assuming again $j \le j'$, we have $[b^{-j'} k', b^{-j'}(k'+1)) \subseteq [b^{-j} k, b^{-j} (k+1))$ which implies $k = \lfloor k' b^{j-j'} \rfloor$. Hence \eqref{innerproductww} yields
\begin{equation}\label{innerproductww2}
\int_{\mathbb{R}} w_{j,k,l}(x) \, \overline{w_{j',k',l'}(x)} \, {\rm
d} x = b^{(j-j')/2} \overline{{\rm wal}_{k'}(b^{j-j'} l - l')} \neq
0
\end{equation}
and the result follows.
\end{proof}

\begin{lemma}\label{lem_bro}
Let $j,k,l \in \mathbb{Z}$ such that $k \ge 0$ and $b | l$. Then
\begin{equation*}
{\rm span} \, \{w_{j,k,l}, w_{j,k,l+1}, \ldots, w_{j,k, l+b-1}\} =
{\rm span} \, \{w_{j+1,k b,l/b}, w_{j+1, k b + 1, l/b}, \ldots,
w_{j+1, kb + b-1, l/b}\}.
\end{equation*}
\end{lemma}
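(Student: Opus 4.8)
The plan is to recognize this as the standard statement that a rectangle in the Walsh phase plane can be tiled in two ways --- ``vertically'' by $b$ tiles of the same frequency band but consecutive spatial intervals, or ``horizontally'' by $b$ tiles of the same spatial interval but consecutive frequency bands --- and that the two associated orthonormal systems span the same subspace of $L_2(\mathbb{R})$. First I would record the geometry: the tiles $T_{j,k,l},\ldots,T_{j,k,l+b-1}$ on the left and the tiles $T_{j+1,kb,l/b},\ldots,T_{j+1,kb+b-1,l/b}$ on the right both partition the rectangle $[b^jl, b^j(l+b)) \times [b^{-j}k, b^{-j}(k+1))$. The hypothesis $b \mid l$ guarantees $l/b \in \mathbb{Z}$, so the right-hand functions are well defined, and all $2b$ functions are supported on the common interval $[b^jl, b^j(l+b))$. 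By Lemma~\ref{lem_orthogonal}, the $b$ left-hand functions are pairwise orthogonal (their tiles are disjoint, as their spatial supports are), and likewise the $b$ right-hand functions; each has unit $L_2$-norm. Hence both spans are $b$-dimensional, and it suffices to prove one inclusion.

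Next I would establish the digit-shift identity for Walsh functions: writing $k = \kappa_0 + \kappa_1 b + \cdots$ and $0 \le r < b$, the integer $kb+r$ has lowest base-$b$ digit $r$ and remaining digits those of $k$ shifted up by one, so for $t \in [0,1)$ one has ${\rm wal}_{kb+r}(t) = \omega_b^{r\,\lfloor bt\rfloor}\,{\rm wal}_k(\{bt\})$, where $\{bt\} = bt - \lfloor bt\rfloor$. Then I would apply the change of variables $s = b^{-j}x - l = b(b^{-(j+1)}x - l/b)$, so that the argument $t = b^{-(j+1)}x - l/b$ of the right-hand Walsh function satisfies $bt = s$, giving $\lfloor bt\rfloor = \lfloor s\rfloor$ and $\{bt\} = \{s\}$. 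Setting $i = \lfloor s\rfloor \in \{0,\ldots,b-1\}$ on the support, the identity yields $w_{j+1,kb+r,l/b}(x) = b^{-(j+1)/2}\omega_b^{ri}\,{\rm wal}_k(\{s\})$, while $w_{j,k,l+i'}(x) = b^{-j/2}\,{\rm wal}_k(\{s\})$ when $\lfloor s\rfloor = i'$ and vanishes otherwise. Matching coefficients produces the explicit relation
\begin{equation*}
w_{j+1,kb+r,l/b} = b^{-1/2}\sum_{i=0}^{b-1}\omega_b^{ri}\,w_{j,k,l+i},
\end{equation*}
valid for each $r \in \{0,\ldots,b-1\}$.

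This exhibits every right-hand function as a linear combination of the left-hand functions, so the right span is contained in the left span; equality then follows since both spaces have dimension $b$. I expect the only delicate point to be the careful bookkeeping of base-$b$ digits and the interplay of the floor and fractional parts under the rescaling $s = bt$ --- in particular verifying that $\lfloor s\rfloor$ is exactly the first fractional digit of $t$ --- since everything else is dimension counting and orthogonality already supplied by Lemma~\ref{lem_orthogonal}. As a remark, the coefficient matrix $(\omega_b^{ri})_{r,i}$ is the $b\times b$ discrete Fourier matrix, whose unitarity (after the factor $b^{-1/2}$) reflects that both systems are orthonormal bases of the same space and lets one invert the relation to write the left-hand functions in terms of the right-hand ones directly.
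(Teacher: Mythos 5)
Your proposal is correct and follows essentially the same route as the paper: the heart of both arguments is the explicit expansion $w_{j+1,kb+r,l/b} = b^{-1/2}\sum_{i=0}^{b-1}\omega_b^{ri}\,w_{j,k,l+i}$, which the paper obtains by computing the coefficients $b^{-1/2}\,{\rm wal}_r(i/b)$ from the inner-product formula \eqref{innerproductww2}, while you derive it directly from the base-$b$ digit structure of $kb+r$. The only cosmetic difference is that you obtain the reverse inclusion by orthonormality and dimension counting, whereas the paper writes out the inverse expansion explicitly.
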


\begin{proof}
Using \eqref{innerproductww2} we obtain for $0 \le r < b$ that
\begin{equation*}
\sum_{s=0}^{b-1} \langle w_{j+1,kb+r,l/b}, w_{j,k,l+s}\rangle_{L_2} \,
w_{j,k,l+s}(x) = b^{-1/2} \sum_{s=0}^{b-1} {\rm wal}_r(s/b)
w_{j,k,l+s}(x) = w_{j+1,kb+r,l/b}(x)
\end{equation*}
and
\begin{equation*}
\sum_{s=0}^{b-1} \langle w_{j,k,l+r}, w_{j+1,kb+s,l/b} \rangle_{L_2} \,
w_{j+1,k b + s, l/b}(x)  = b^{-1/2} \sum_{s=0}^{b-1} \overline{{\rm
wal}_s(r/b)} w_{j+1,kb+s,l/b}(x)  =  w_{j,k,l+r}(x),
\end{equation*}
hence the result follows.
\end{proof}

The dual of Lemma~\ref{lem_bro} in terms of the corresponding tiles
can be stated in the following manner. Let $j,k,l$ be as in
Lemma~\ref{lem_bro}. Then:
\begin{quote}
the tiles $T_{j,k,l}, T_{j,k,l+1},\ldots,
T_{j,k,l+b-1}$ cover the same area as

the tiles $T_{j+1,kb, l/b}, T_{j+1, kb+1, l/b}, \ldots, T_{j+1, kb+ b-1, l/b}$.
\end{quote}

\begin{lemma}\label{lem_bro2}
Let $\tau$ and $\tau'$ be two finite sets of tiles such that all pairs of tiles in $\tau$ are disjoint and all pairs of tiles in $\tau'$ are disjoint. Let $W$ and $W'$ be the corresponding sets of Walsh functions. Then
$$\bigcup_{T \in \tau} T = \bigcup_{T' \in \tau'} T' \quad \Longleftrightarrow \quad {\rm
span}\, W  = {\rm span} \, W'.$$
\end{lemma}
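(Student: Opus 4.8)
The plan is to bypass the geometric rearrangement of Lemma~\ref{lem_bro} and derive both implications from a single Parseval-type identity for the orthonormal families $W$ and $W'$. Throughout I write $w_T$ for the Walsh function attached to a tile $T$. First I would record two preliminaries. Since the tiles in $\tau$ are pairwise disjoint, Lemma~\ref{lem_orthogonal} shows that $W$ is an orthogonal family, and because each generator satisfies $\|w_{j,k,l}\|_{L_2}=1$ (the substitution $y=b^{-j}x-l$ together with $|{\rm wal}_k|\equiv1$ on $[0,1)$ gives $\int_{\mathbb{R}}|w_{j,k,l}|^2=\int_0^1 1\rd y=1$, consistent with $\Vol(T_{j,k,l})=b^{j}b^{-j}=1$), the set $W$ is orthonormal; the same holds for $W'$. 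Hence ${\rm span}\,W$ is a closed subspace with orthonormal basis $W$, and likewise for $W'$.

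The crucial step is the identity $|\langle w_T,w_{T'}\rangle_{L_2}|^2=\Vol(T\cap T')$, valid for all tiles. When $T\cap T'=\emptyset$ both sides vanish by Lemma~\ref{lem_orthogonal}. When the tiles intersect, say with $j\le j'$, equation \eqref{innerproductww2} gives $\langle w_{j,k,l},w_{j',k',l'}\rangle_{L_2}=b^{(j-j')/2}\overline{{\rm wal}_{k'}(b^{j-j'}l-l')}$; as this value is nonzero and $|{\rm wal}_{k'}|\equiv1$ on its support, its modulus equals $b^{(j-j')/2}$, so $|\langle\cdot\rangle|^2=b^{j-j'}$. On the other hand, in the intersecting case with $j\le j'$ the time-interval of $T$ lies inside that of $T'$ while the frequency-interval of $T'$ lies inside that of $T$, so $T\cap T'$ is a rectangle of area $b^{j}\cdot b^{-j'}=b^{j-j'}$, which proves the identity.

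With this in hand I would, for a fixed $T\in\tau$, sum over $\tau'$ and use the disjointness of the tiles in $\tau'$ to obtain $\sum_{T'\in\tau'}|\langle w_T,w_{T'}\rangle_{L_2}|^2=\sum_{T'\in\tau'}\Vol(T\cap T')=\Vol\bigl(T\cap\bigcup_{T'\in\tau'}T'\bigr)$. The left-hand side is precisely $\|P'w_T\|_{L_2}^2$, where $P'$ is the orthogonal projection onto ${\rm span}\,W'$, since $W'$ is an orthonormal basis of that subspace. Because $\|w_T\|_{L_2}^2=\Vol(T)=1$ and $\|w_T\|^2=\|P'w_T\|^2+\|w_T-P'w_T\|^2$, we conclude that $w_T\in{\rm span}\,W'$ if and only if $\Vol\bigl(T\cap\bigcup_{T'\in\tau'}T'\bigr)=1$, i.e. if and only if $T$ is, up to a null set, contained in $\bigcup_{T'\in\tau'}T'$.

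Both directions then drop out. If $\bigcup_{T\in\tau}T=\bigcup_{T'\in\tau'}T'$, then every $T\in\tau$ lies in $\bigcup_{T'\in\tau'}T'$, so the criterion gives $w_T\in{\rm span}\,W'$ for all $T\in\tau$, whence ${\rm span}\,W\subseteq{\rm span}\,W'$; the symmetric argument gives the reverse inclusion, so the spans coincide. Conversely, if ${\rm span}\,W={\rm span}\,W'$, then each $w_T\in{\rm span}\,W'$, so the criterion forces $\Vol\bigl(T\setminus\bigcup_{T'\in\tau'}T'\bigr)=0$; interchanging $\tau$ and $\tau'$ yields the opposite inclusion, hence equality of the unions. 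I expect the one delicate point to be the passage from the almost-everywhere statement $\Vol(T\setminus\bigcup_{T'}T')=0$ to the exact inclusion $T\subseteq\bigcup_{T'}T'$: here I use that $T\setminus\bigcup_{T'}T'$ is a finite union of half-open dyadic boxes, so that zero measure forces it to be empty. This is the only place where the concrete geometry of the tiles, beyond the inner-product identity, is needed.
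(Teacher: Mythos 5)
Your proof is correct, but it takes a genuinely different route from the paper's. The paper disposes of this lemma in one line, by ``successively using Lemma~\ref{lem_bro}'': both tilings are rewritten, via the local exchange of $b$ horizontally adjacent tiles for $b$ vertically stacked ones, into a common refinement, and the span is invariant under each exchange. That argument is purely combinatorial and very short, but the induction behind ``successively'' (that any two disjoint tilings of the same region can be driven to a common refinement by these moves, and that the reverse implication also follows) is left implicit. Your argument instead isolates the quantitative identity $|\langle w_T,w_{T'}\rangle_{L_2}|^2=\Vol(T\cap T')$, which you correctly extract from Lemma~\ref{lem_orthogonal} and \eqref{innerproductww2} (the nesting of $b$-adic intervals gives $\Vol(T\cap T')=b^{j}\cdot b^{-j'}=b^{j-j'}$ in the intersecting case with $j\le j'$, matching the modulus of the inner product), and then converts the set-theoretic statement into a Bessel/Parseval computation: $w_T\in{\rm span}\,W'$ iff $\|P'w_T\|^2=\Vol\bigl(T\cap\bigcup_{T'}T'\bigr)$ equals $\Vol(T)=1$. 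This buys a self-contained proof of both implications at once, a clean membership criterion for a single $w_T$, and it sidesteps the refinement induction entirely; the cost is that it is longer and relies on the measure-theoretic endgame, which you handle correctly (the residual set is a finite union of half-open boxes, and any nonempty such union has positive measure, so null implies empty --- the boxes need not even be $b$-adic for this). All steps check out, including the use of pairwise disjointness of $\tau'$ to turn $\sum_{T'}\Vol(T\cap T')$ into $\Vol\bigl(T\cap\bigcup_{T'}T'\bigr)$ and the symmetry argument for the two inclusions.
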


\begin{proof}
The proof follows by successively using Lemma~\ref{lem_bro}.
\end{proof}

\begin{lemma}
Let $J \subseteq \mathbb{Z}$, $K \subseteq \mathbb{N}_0$, and $L
\subseteq \mathbb{Z}$ and set $$\tau = \{T_{j,k,l}: j \in J, k \in
K, l \in L\}.$$ Assume that the tiles in $\tau$ are pairwise disjoint. Then the system
$$W = \{w_{j,k,l}: j \in J, k \in K, l \in L\}$$ is a complete
system in $L_2(\mathbb{R})$ if and only if $$\bigcup_{T \in \tau} T
= \mathbb{R} \times \mathbb{R}_0^+.$$
\end{lemma}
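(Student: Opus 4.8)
The plan is to exploit that disjointness of the tiles already gives us a great deal. By Lemma~\ref{lem_orthogonal}, pairwise disjoint tiles correspond to pairwise orthogonal Walsh functions, and since each $w_{j,k,l}$ has unit $L_2$-norm, $W$ is an orthonormal system. Hence completeness of $W$ is equivalent to $\overline{{\rm span}}\,W = L_2(\mathbb{R})$, i.e.\ to $W$ being an orthonormal basis. I would compare $W$ against the \emph{standard} system $B=\{w_{0,k,l}:k\in\mathbb{N}_0,\,l\in\mathbb{Z}\}$, whose tiles $T_{0,k,l}=[l,l+1)\times[k,k+1)$ tile $\mathbb{R}\times\mathbb{R}_0^+$ and which is a complete orthonormal system (the classical completeness of $\{{\rm wal}_k\}_{k\ge0}$ on $[0,1)$, translated over the integer intervals). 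Since both $B$ and $W$ are orthonormal and $B$ is complete, a standard Hilbert-space argument reduces everything to the projection norms: writing $P_W$ for the orthogonal projection onto $\overline{{\rm span}}\,W$, one has $W$ complete $\iff P_W b=b$ for every $b\in B$ $\iff \|P_W w_{0,k,l}\|^2=1$ for all $k,l$. So the whole statement will follow once I can compute $\|P_W w_{0,k,l}\|^2$ and match it to coverage.

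The central step is a local structural analysis of which tiles of $\tau$ can meet a fixed unit square $S=T_{0,k,l}$. Both the spatial and the frequency edges of every tile are $b$-adic intervals, so any two are either nested or disjoint. Examining $T_{j,k',l'}\cap S$ and splitting on the sign of $j$, I would show that a tile meeting $S$ is either $S$ itself (when $j=0$), or \emph{wide} ($j>0$: spatial interval $\supseteq[l,l+1)$, frequency interval $\subseteq[k,k+1)$), or \emph{tall} ($j<0$: spatial interval $\subseteq[l,l+1)$, frequency interval $\supseteq[k,k+1)$). The key observation is that a wide tile and a tall tile meeting $S$ would necessarily intersect each other (their spatial intervals both contain a point of $[l,l+1)$ and their frequency intervals both contain a point of $[k,k+1)$), contradicting disjointness; likewise two wide tiles must have disjoint frequency bands. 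Thus the tiles meeting $S$ form a family of disjoint horizontal bands $\{I_m\subseteq[k,k+1)\}$ (the wide case) or disjoint vertical strips (the tall case), and in either case $\bigcup\tau$ covers $S$ if and only if these bands \emph{partition} $[k,k+1)$ (resp.\ $[l,l+1)$).

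Finally I would evaluate the projection. By Lemma~\ref{lem_orthogonal} only the tiles meeting $S$ contribute to $P_W w_{0,k,l}$, and for a wide tile with frequency band $I_m$ of length $b^{-j}$ the inner product formula \eqref{innerproductww2} gives $|\langle w_{0,k,l},w_{j,k',l'}\rangle|^2=b^{-j}=|I_m|$ (the Walsh factor has modulus $1$ because the meeting condition places its argument in $[0,1)$); the tall case is identical by symmetry. Summing over the disjoint bands yields the clean identity $\|P_W w_{0,k,l}\|^2=\sum_m|I_m|=\Vol(S\cap\bigcup_{T\in\tau}T)$. Hence $\|P_W w_{0,k,l}\|^2=1$ exactly when $S\subseteq\bigcup_{T\in\tau}T$, and since $\mathbb{R}\times\mathbb{R}_0^+$ is the disjoint union of the unit squares $T_{0,k,l}$, requiring this for all $k,l$ is the same as $\bigcup_{T\in\tau}T=\mathbb{R}\times\mathbb{R}_0^+$. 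Combined with the reduction of the first paragraph, this proves both implications at once.

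I expect the main obstacle to be precisely the local structure step rather than the computation. One must handle the fact that infinitely many tiles may meet a single square and that wide tiles \emph{overflow} $S$ spatially, so one cannot simply invoke the finite statement of Lemma~\ref{lem_bro2}; the point of routing the argument through the projection norm is that the area bookkeeping converges ($\sum_m|I_m|\le 1$ by Bessel's inequality) and never requires an exact finite subtiling of $S$. The no-mixing claim, that wide and tall tiles cannot simultaneously meet $S$, is the crux that forces the contributions to form a single $b$-adic partition of an edge of $S$, and it is exactly here that the disjointness hypothesis is used.
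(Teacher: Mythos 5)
Your argument is correct in substance but follows a genuinely different route from the paper's. The paper proves the forward direction by repeatedly applying the local exchange rule of Lemma~\ref{lem_bro} (packaged as Lemma~\ref{lem_bro2}): starting from the given tiling it manufactures every standard tile $T_{0,k,l}$ by finite, span-preserving moves, so that ${\rm span}\,W$ contains the classical translated Walsh system, which is complete; for the converse it exhibits a tile inside the complement of $\bigcup_{T\in\tau}T$ whose Walsh function is orthogonal to all of $W$. You instead bypass the combinatorial rewriting entirely and prove the quantitative identity $\sum_{w\in W}|\langle w_{0,k,l},w\rangle|^2=\Vol\bigl(T_{0,k,l}\cap\bigcup_{T\in\tau}T\bigr)$, using Lemma~\ref{lem_orthogonal} to restrict attention to tiles meeting the unit square, your wide/tall trichotomy plus the no-mixing observation to organize them into disjoint bands, and \eqref{innerproductww2} to identify each squared inner product with the area of the corresponding overlap (the check that the Walsh factor there has modulus one is correct). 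This buys you two things: it handles infinitely many tiles meeting a single square without needing a finite exact subtiling, which is precisely where a naive appeal to Lemma~\ref{lem_bro2} would require extra care, and it yields a clean quantitative statement (projection norm equals covered area) from which both implications drop out simultaneously. The paper's route is shorter and reuses machinery it needs anyway for the multivariate case.

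One caveat, which you share with the paper rather than introduce: your final step equates $\|P_W w_{0,k,l}\|^2=1$ with $T_{0,k,l}\subseteq\bigcup_{T\in\tau}T$, but volume $1$ only gives coverage up to a null set (a disjoint family of $b$-adic bands such as $[1/2,1),[1/4,1/2),[1/8,1/4),\dots$ exhausts $[0,1)$ in measure while missing the point $0$). So what your computation literally proves is that $W$ is complete if and only if the tiles cover $\mathbb{R}\times\mathbb{R}_0^+$ up to measure zero. The paper's converse contains the mirror image of the same imprecision: it asserts that a nonempty complement must contain an entire tile, which likewise fails when the complement is null. Either formulation should be repaired by stating the covering condition modulo null sets, or by adding an argument that the specific families $\tau$ admitted here cannot leave a nonempty null gap; for the tilings actually used later in the paper the covering is exact by construction, so nothing downstream is affected.
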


\begin{proof}
If $$\bigcup_{T \in \tau} T = \mathbb{R} \times \mathbb{R}_0^+,$$
then by applying Lemma~\ref{lem_bro2} one can obtain all tiles of
the form $T_{0,k,l}$, $k \in \mathbb{N}_0$ and $l \in \mathbb{Z}$ as a finite linear combination of tiles in $\tau$. These tiles also cover
$\mathbb{R} \times \mathbb{R}_0^+$, and the corresponding set of
Walsh functions includes the classical Walsh function system, which
is known to be complete in $L_2(\mathbb{R})$. Since the span stays
unchanged when applying Lemma~\ref{lem_bro2} it follows that the
system $W$ is a complete system in $L_2(\mathbb{R})$.

Assume that $$\bigcup_{T \in \tau} T \neq \mathbb{R} \times
\mathbb{R}_0^+.$$ Then there is a tile $T'$ such that $T' \subseteq
\mathbb{R} \times \mathbb{R}_0^+ \setminus \bigcup_{T \in \tau} T$,
then the corresponding Walsh function is orthogonal to all functions
in $W$ and hence $W$ is not complete.
%
%
\end{proof}

In the following we consider functions $f: \mathbb{R}^s \to \mathbb{R}$. In this case we define tensor products of the Walsh functions in the following way. Let $\bsj, \bsl \in \mathbb{Z}^s$ and $\bsk \in \mathbb{N}_0^s$ be given by $\bsj = (j_1,\ldots, j_s)$, $\bsl = (l_1,\ldots, l_s)$, and $\bsk = (k_1,\ldots, k_s)$. Then
\begin{equation*}
w_{\bsj,\bsk,\bsl}(\bsx) = \prod_{i=1}^s w_{j_i, k_i, l_i}(x_i).
\end{equation*}
The tile corresponding to $w_{\bsj,\bsk,\bsl}$ is given by
\begin{equation*}
T_{\bsj,\bsk,\bsl} = \prod_{i=1}^s T_{j_i, k_i, l_i} = \prod_{i=1}^s \left([b^{j_i} l_i, b^{j_i}(l_i+1)) \times [b^{-j_i} k_i, b^{-j_i} (k_i+1))\right).
\end{equation*}
All the results of this section also hold for the tensor product case, that is, two functions $w_{\bsj, \bsk,\bsl}, w_{\bsj', \bsk', \bsl'}$ are orthogonal if and only if the corresponding tiles are disjoint and a system of orthogonal functions $\{w_{\bsj,\bsk,\bsl}: (\bsj,\bsk,\bsl) \in R\}$ is complete in $L_2(\mathbb{R}^s)$ if and only if the corresponding tiles cover $(\mathbb{R} \times\mathbb{R}_0^+)^s$.

\section{Smoothness, convergence behavior, and the decay of the Walsh coefficients}\label{sec_decay_Walsh}

In this section we define classes of integrands of functions $f:\mathbb{R}^s \to \mathbb{R}$. The smoothness of the integrands will be controlled by local smoothness parameters and the rate of decay of $f(\bsx)$, as the point $\bsx$ tends to infinity (in one or more of its coordinates), is controlled by local weight parameters.

Let $s \ge 1$ and let $\Pcal_{\mathbb{R}^s}$ be a partition of $\mathbb{R}^s$ into subintervals $J$ of the form $J = [b^{\bsj} \star \bsell, b^{\bsj} \star (\bsell + \boldsymbol{1})) \subseteq \mathbb{R}^s$, where $\bsj,\bsell \in \mathbb{Z}^s$.

We control the rate of decay of the Walsh coefficients of the
integrand $f$ using three parameters:
\begin{itemize}
\item[(i)] The local smoothness of the integrand $f$, denoted by $\alpha_u$ for $\emptyset \neq u \subseteq S$ (we assume $1/2 < \alpha_u \le 1$);
\item[(ii)] The rate of decay of the integrand $f$ as
$|\bsx|_\infty \rightarrow \infty$; for this we use a function
$\gamma_{\emptyset}: \Pcal_{\mathbb{R}^{s}} \to \mathbb{R}^+$;
\item[(iii)] The rate of decay of the `derivative' of the integrand
$f$; for this we use functions $\gamma_u: \Pcal_{\mathbb{R}^{s}} \to
\mathbb{R}^+$ for $\emptyset \neq u \subseteq S$;
\end{itemize}

We introduce some necessary restrictions on the functions
$\gamma_u$.

\begin{definition}\label{def_weightfun}
Let $s \ge 1$ and let $\Pcal_{\mathbb{R}^s}$ be a partition of $\mathbb{R}^s$ into subintervals $J$ of the form $J = [b^{\bsj} \star \bsell, b^{\bsj} \star (\bsell + \boldsymbol{1})) \subseteq \mathbb{R}^s$, where $\bsj,\bsell \in \mathbb{Z}^s$.

Then we call $\bsgamma = (\gamma_u)_{u\subseteq S}$
local weight parameters if the weight functions $\gamma_u:
\Pcal_{\mathbb{R}^s} \to \mathbb{R}^+$ are such that for each $u \subseteq S$ we have
$$\sup_{J \in \Pcal_{\mathbb{R}^s}} \gamma_u(J) <
\infty.$$

We call $\bsalpha = (\alpha_u)_{\emptyset \neq u \subseteq S}$ local
smoothness parameters if the functions $\alpha_u: \Pcal_{\mathbb{R}^s} \to
\mathbb{R}^+$ are such that
$$\sup_{J \in \Pcal_{\mathbb{R}^s}} \alpha_u(J) \le 1 \quad \mbox{and} \quad \inf_{J \in \Pcal_{\mathbb{R}^{s}}} \alpha_u(J) > 1/2$$ for $\emptyset \neq u \subseteq S$.
\end{definition}
Note that the assumption $\inf_{J \in \Pcal_{\mathbb{R}^s}} \alpha_u(J) > 1/2$ is needed for the main results of the paper, hence we include it already in Definition~\ref{def_weightfun}.

We now define a local variation for functions $f:\mathbb{R}^s \to \mathbb{R}$. Let $J = [b^{\bsj} \star \bsell, b^{\bsj} \star (\bsell + \boldsymbol{1})) \subseteq \mathbb{R}^s$ for some $\bsj, \bsell \in \mathbb{Z}^s$. For a subinterval $I = \prod_{i=1}^s [x_i, y_i) \subseteq J$ with $x_i < y_i$ and a function $f:\mathbb{R}^s \rightarrow \real$, let the
function $\Delta(f,I)$ denote the alternating sum of $f$ at the
vertices of $I$ where adjacent vertices have opposite signs. (Hence, for instance,
for $f = \prod_{i=1}^s f_i$ we have $\Delta(f,I) = \prod_{i=1}^s
(f_i(x_i) - f_i(y_i))$.)

Let $J = [b^{\bsj} \star \bsell, b^{\bsj} \star (\bsell + \boldsymbol{1})) \subseteq \mathbb{R}^s$ for some $\bsj, \bsell \in \mathbb{Z}^s$. We define the local generalized variation in the sense of
Vitali of order $1/2 < \alpha \le 1$ in $J$
by
\begin{equation*}
V^{(s)}_{\alpha, J}(f) = \sup_{{\Pcal_J}} \left(\sum_{I \in \Pcal_J}
\Vol(I) \left|\frac{\Delta(f,I)}{\Vol(I)^{\alpha}}\right|^{2}\right)^{1/2},
\end{equation*}
where the supremum is extended over all partitions $\Pcal_J$ of
$J$ into subintervals and $\Vol(I)$ denotes the volume of the
subinterval $I$. (Again, one could include the cases where $0<\alpha \le 1/2$.)

For $\alpha = 1$ and if the partial derivatives of $f$ are
continuous on $J$ we also have the formula
\begin{equation*}
V_{1,J}^{(s)}(f) = \left(\int_{J} \left|\frac{\partial^s
f}{\partial x_1\cdots \partial x_s} \right|^{2} \rd
\bsx\right)^{1/2}.
\end{equation*}

For $\emptyset \neq u \subseteq\cS$, let $V_{\alpha, J}^{(|u|)}(f_u;u)$
be the local generalized Vitali variation of order $1/2 < \alpha \le
1$ in $J_u = [b^{\bsj_u} \star \bsell_u, b^{\bsj_u} \star (\bsell_u + \boldsymbol{1}_u))$ of the $|u|$-dimensional function
$$f_u(\bsx_u) = \int_{J_{S \setminus u}} f(\bsx) \rd \bsx_{\cS\setminus
u},$$  where $J_{S \setminus u} = [b^{\bsj_{S \setminus u}} \star \bsell_{S \setminus u}, b^{\bsj_{S \setminus u}} \star (\bsell_{S \setminus u} + \boldsymbol{1}_{S \setminus u}))$. For $u = \emptyset$ we define
$$V_{\alpha, J}^{(|\emptyset|)}(f_\emptyset;\emptyset) = \left(\int_{J} |f(\bsx)|^2 \rd \bsx \right)^{1/2}.$$


Let $\Pcal_{\mathbb{R}^s}$ be a partition of $\mathbb{R}^s$ into subintervals of the form $J = [b^{\bsj} \star \bsell, b^{\bsj} \star (\bsell + \boldsymbol{1}))$. The generalized Hardy and Krause variation with local smoothness $\bsalpha$ and local weight $\bsgamma$ with respect to the partition $\Pcal_{\mathbb{R}^s}$ is defined by
\begin{equation*}
V_{\bsalpha,\bsgamma}(f) = \left(\sum_{J \in \Pcal_{\mathbb{R}^s}} \sum_{u \subseteq S} \left[ \gamma^{-1}_u(J) V^{|u|}_{\alpha_u(J), J}(f_u; u) \right]^2  \right)^{1/2}.
\end{equation*}
A function $f$ for which $V_{\bsalpha,\bsgamma}(f) < \infty$ is said to be of
bounded (or finite) variation of order $\bsalpha$. Further we set
\begin{equation*}
H_{\bsalpha,\bsgamma} = \{f:\mathbb{R}^s \to \mathbb{R}: f \mbox{ is continuous and } V_{\bsalpha,\bsgamma}(f) <\infty \}.
\end{equation*}

Let $f: \mathbb{R}^s \to \mathbb{R}$ be given such that
$V_{\bsalpha, \bsgamma}(f) < \infty$. Let the Walsh coefficient
$\widehat{f}_{\bsj,\bsk,\bsl}$ be given by
\begin{equation*}
\widehat{f}_{\bsj,\bsk,\bsl} = \langle f, w_{\bsj,\bsk,\bsl} \rangle_{L_2}.
\end{equation*}
For $\bsj,\bsl \in \mathbb{Z}^s$ and $\bsr \in \mathbb{N}_0^s$ let
\begin{equation*}
\sigma_{\bsj,\bsr,\bsl}(f) = \left(\sum_{k_1=\lfloor b^{r_1-1} \rfloor}^{b^{r_1}-1} \cdots \sum_{k_s = \lfloor b^{r_s-1} \rfloor}^{b^{r_s}-1} |\widehat{f}_{\bsj,\bsk,\bsl}|^2 \right)^{1/2},
\end{equation*}
where $\bsr = (r_1,\ldots, r_s)$.

\begin{lemma}\label{lem_boundw}
Let $\Pcal_{\mathbb{R}^s}$ be a partition of $\mathbb{R}^s$ into subintervals of the form $J = [b^{\bsj} \star \bsell, b^{\bsj} \star (\bsj + \boldsymbol{1}))$, let $\bsalpha$ be local smoothness parameters, and $\bsgamma$ be local weight parameters. Let $f: \mathbb{R}^s \to \mathbb{R}$ be
given such that $V_{\bsalpha,\bsgamma}(f) < \infty$.
\begin{itemize}
\item[(i)] For any $\bsj,\bsl \in \mathbb{Z}^s$ such that $J = [b^{\bsj} \star \bsell, b^{\bsj} \star (\bsell + \boldsymbol{1})) \in \Pcal_{\mathbb{R}^s}$ and $\bsr \in \mathbb{N}_0^s \setminus \{\bszero\}$, where $u_{\bsr} = \{i \in S: r_i \neq 0\}$, we have
\begin{equation*}
\sigma_{\bsj,\bsr,\bsl}(f) \le (b-1)^{(\alpha_{u_{\bsr}}(J) - 1/2) |u_{\bsr}|} b^{-\alpha_{u_{\bsr}}(J) |\bsr|_1} b^{\alpha_{u_{\bsr}}(J) \sum_{i \in u_{\bsr}} j_i} b^{-\sum_{i \notin u_{\bsr}} j_i/2}  \gamma_u(J) V_{\bsalpha,\bsgamma}(f).
\end{equation*}

\item[(ii)] For any $\bsj, \bsl \in \mathbb{Z}$ such that $J = [b^{\bsj} \star \bsell, b^{\bsj} \star (\bsell + \boldsymbol{1})) \in \Pcal_{\mathbb{R}^s}$ and $\bsr \in \mathbb{N}_0^s$, where $u_{\bsr} = \{i \in S: r_i \neq 0\}$,  we have
\begin{equation*}
\sigma_{\bsj,\bsr,\bsl}(f) \le b^{|u_{\bsr}|} 2^{|u_{\bsr}|} \gamma_{\emptyset}(J) V_{\bsalpha,\bsgamma}(f).
\end{equation*}
\end{itemize}
\end{lemma}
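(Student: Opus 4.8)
The plan is to prove the one-dimensional estimates first and then assemble the $s$-dimensional bound from the tensor-product structure of $w_{\bsj,\bsk,\bsl}$ and of $V^{(|u|)}_{\alpha,J}$. Writing $u=u_{\bsr}$, I would begin by separating the coordinates. For $i\notin u$ we have $r_i=0$, so the only index is $k_i=0$ and $\overline{w_{j_i,0,l_i}(x_i)}=b^{-j_i/2}\,1_{[b^{j_i}l_i,b^{j_i}(l_i+1))}(x_i)$; integrating $f$ against this factor simply integrates out the $i$th variable over $J_i$ and produces the constant $b^{-j_i/2}$. Doing this for all $i\notin u$ gives $\widehat f_{\bsj,\bsk,\bsl}=b^{-\sum_{i\notin u}j_i/2}\,\widehat{(f_u)}_{\bsj_u,\bsk_u,\bsl_u}$, where $f_u$ is exactly the marginal appearing in $V^{(|u|)}_{\alpha,J}(f_u;u)$. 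This accounts for the factor $b^{-\sum_{i\notin u}j_i/2}$ and reduces both parts to estimating the block sum of the $|u|$-dimensional Walsh coefficients of $f_u$ on $J_u$.

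For part (i) the heart is a single coordinate $i\in u$, where $b^{r_i-1}\le k_i<b^{r_i}$. Here I would use the two structural properties of Walsh functions implicit in Lemma~\ref{lem_orthogonal}: after the substitution $y=b^{-j_i}x_i-l_i$, the function $\mathrm{wal}_{k_i}$ is constant on intervals of length $b^{-r_i}$ and has vanishing integral over every interval of length $b^{-(r_i-1)}$. Splitting the $i$th integral over the scale-$(r_i-1)$ grid, subtracting the mean on each such block (permissible by the vanishing-integral property), and then invoking the discrete orthogonality of $\{\mathrm{wal}_k\}_{k=0}^{b^{r_i}-1}$ on the scale-$r_i$ grid (Parseval over $\mathbb{Z}_b^{r_i}$), the block sum becomes, exactly, a constant times a sum of squared detail coefficients, i.e.\ squared differences between the average of the rescaled $f_u$ over a scale-$r_i$ cell and over its scale-$(r_i-1)$ parent. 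Iterating over all $i\in u$ turns the whole block sum into a weighted $\ell_2$-sum of multidimensional detail coefficients over the scale-$\bsr$ grid partition of $J_u$.

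The remaining, and main, task is to dominate these detail coefficients by $V^{(|u|)}_{\alpha_u(J),J}(f_u;u)$. The mechanism I would use is that the sup-definition of $V^{(|u|)}_{\alpha,J}$ implies, for every subbox $I\subseteq J_u$, the increment bound $|\Delta(f_u,I)|\le \Vol(I)^{\alpha-1/2}\,V^{(|u|)}_{\alpha,J}(f_u;u)$ (extend $I$ to a partition and drop the nonnegative remaining terms). Each detail coefficient is an average of such increments over cells of side $b^{j_i-r_i}$, the parent having side $b^{j_i-r_i+1}$; organising the increments into a genuine partition, so that the variation is spent only once, produces the exponent $\alpha-1/2$ and the block-counting factor $(b-1)^{\alpha-1/2}$ in each coordinate $i\in u$. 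The Parseval normalizations, together with the rescaling of the variation under $y=b^{-j_i}x_i-l_i$ (which contributes $b^{(\alpha-1/2)\sum_{i\in u}j_i}$), combine to $b^{-\alpha|\bsr|_1}b^{\alpha\sum_{i\in u}j_i}$, and the $|u|$ coordinates give $(b-1)^{(\alpha-1/2)|u|}$. Finally $V^{(|u|)}_{\alpha_u(J),J}(f_u;u)\le \gamma_{u}(J)\,V_{\bsalpha,\bsgamma}(f)$ is immediate from the definition of $V_{\bsalpha,\bsgamma}$ as an $\ell_2$-sum over all $J$ and $u$, which yields the stated bound.

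I expect the step in the previous paragraph to be the genuine obstacle, for two reasons. First, the detail coefficients are built from averages of $f_u$ rather than point values, so one cannot directly read off $\Delta(f_u,I)$ and must pass through the increment bound while tracking the $\alpha$-weights. Second, and more delicately, for general $\alpha\in(1/2,1]$ there is no mean value theorem, so bounding every increment by the global variation overcounts and gives only the factor $(b-1)^{1/2}$ instead of the sharper $(b-1)^{\alpha-1/2}$; obtaining the correct exponent forces one to arrange the increments into disjoint partitions and to use $2\alpha-1>0$. Part (ii), by contrast, is easy: for fixed $\bsj,\bsl$ the system $\{w_{\bsj,\bsk,\bsl}\}_{\bsk}$ is an orthonormal basis of $L_2(J)$, so Bessel's inequality gives $\sigma_{\bsj,\bsr,\bsl}(f)\le \|f\|_{L_2(J)}=V^{(0)}_{\alpha,J}(f_\emptyset;\emptyset)\le \gamma_\emptyset(J)\,V_{\bsalpha,\bsgamma}(f)$, and the stated constant $b^{|u_{\bsr}|}2^{|u_{\bsr}|}\ge 1$ is a harmless over-estimate, incurred only if one insists on reusing the per-coefficient estimates of part (i) rather than Bessel.
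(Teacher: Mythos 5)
Your proof is correct in outline but takes a genuinely different route from the paper's on both halves. For part (i), after the change of variables $\bsy = b^{-\bsj}\star\bsx - \bsl$ the paper does not re-derive the decay of the block sums at all: it applies the known estimate \cite[Lemma~13.23]{DP09} for functions of bounded fractional variation on $[0,1]^s$ to the rescaled function $f(b^{\bsj}\star(\cdot+\bsl))$, and then converts $V^{(|u|)}_{\alpha,[0,1]^s}$ of the rescaled function back into $V^{(|u|)}_{\alpha,J}(f_u;u)$, which is where the powers $b^{\alpha\sum_{i\in u}j_i}b^{-\sum_{i\notin u}j_i/2}$ come from. What you propose --- splitting off the coordinates outside $u_{\bsr}$ to produce the marginal $f_u$, then using the mean-zero/piecewise-constant structure of $\mathrm{wal}_{k_i}$ at scales $r_i-1$ and $r_i$, Parseval over $\mathbb{Z}_b^{r_i}$, and the increment bound $|\Delta(f_u,I)|\le\Vol(I)^{\alpha-1/2}V^{(|u|)}_{\alpha,J}(f_u;u)$ --- is essentially a sketch of the proof of that cited lemma rather than an alternative to it; you correctly identify that the only delicate point is arranging the increments into disjoint partitions so as to obtain $(b-1)^{(\alpha-1/2)|u|}$ and $b^{-\alpha|\bsr|_1}$ rather than lossier constants, but you do not actually carry that step out, so as written the core of part (i) remains a (plausible and standard) sketch where the paper has a one-line citation. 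For part (ii) your argument is cleaner and sharper than the paper's: the paper handles $\bsr=\bszero$ by Cauchy--Schwarz and then, for $\bsr\neq\bszero$, explicitly builds the piecewise-constant partial-sum function $g$ from inclusion--exclusion of the averages $c_{\bsr,\bsa}$, which is what produces the constant $b^{|u_{\bsr}|}2^{|u_{\bsr}|}$; your observation that $\{w_{\bsj,\bsk,\bsl}\}_{\bsk\in\mathbb{N}_0^s}$ is orthonormal and complete in $L_2(J)$, so that Bessel's inequality gives $\sigma_{\bsj,\bsr,\bsl}(f)\le\|f\|_{L_2(J)}\le\gamma_\emptyset(J)V_{\bsalpha,\bsgamma}(f)$ uniformly in $\bsr$, yields the stated bound immediately (and in fact without the factor $b^{|u_{\bsr}|}2^{|u_{\bsr}|}$).
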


\begin{proof}
We prove $(i)$. Let $\bsr \in \mathbb{N}_0^s \setminus\{\bszero\}$.
We have
\begin{eqnarray*}
\widehat{f}_{\bsj,\bsk,\bsl} & = & \int_{\mathbb{R}^s} f(\bsx) \overline{w_{\bsj,\bsk,\bsl}(\bsx)} \, {\rm d} \bsx \\ & = & b^{-(j_1+\cdots + j_s)/2} \int_{b^{j_1} l_1}^{b^{j_1} (l_1+1)} \cdots \int_{b^{j_s} l_s}^{b^{j_s}(l_s+1)} f(\bsx) \overline{{\rm wal}_{\bsk}(b^{-\bsj} \star \bsx - \bsl)} \, {\rm d} \bsx \\ & = & b^{(j_1 + \cdots + j_s) /2} \int_{[0,1]^s} f(b^{\bsj} \star (\bsy+\bsl)) \overline{{\rm wal}_{\bsk}(\bsy)} \, {\rm d} \bsy,
\end{eqnarray*}
where $b^{-\bsj} \star \bsx = (b^{-j_1} x_1, \ldots, b^{-j_s} x_s)$. We have $u_{\bsr} = \{i \in S: r_i \neq 0\} \neq \emptyset$ and therefore, using \cite[Lemma~13.23]{DP09}, we have
\begin{eqnarray*}
\sigma^2_{\bsj,\bsr,\bsl}(f) & \le & (b-1)^{(2\alpha - 1) |u_{\bsr}| } b^{j_1 + \cdots + j_s} b^{-2\alpha |\bsr|_1} \left[ V^{(|u_{\bsr}|)}_{\alpha,[0,1]^s}(f_{u_{\bsr}}(b^{\bsj} \star (\cdot + \bsell)); u_{\bsr}) \right]^2 \\ & = & (b-1)^{(2\alpha - 1) |u_{\bsr}| } b^{j_1 + \cdots + j_s} b^{-2\alpha |\bsr|_1} b^{- (1-2 \alpha)\sum_{i \in u_{\bsr}} j_i } b^{-2\sum_{i \notin u_{\bsr}} j_i} \left[ V^{(|u_{\bsr}|)}_{\alpha,J}(f_{u_{\bsr}}; u_{\bsr}) \right]^2 \\ & = & (b-1)^{(2\alpha - 1) |u_{\bsr}|} b^{-2 \alpha |\bsr|_1} b^{2\alpha \sum_{i \in u_{\bsr}} j_i}  b^{-\sum_{i \notin u_{\bsr}} j_i} \left[ V^{(|u_{\bsr}|)}_{\alpha,J}(f_{u_{\bsr}}; u_{\bsr}) \right]^2 \\ & \le & (b-1)^{(2\alpha - 1) |u_{\bsr}|} b^{-2 \alpha |\bsr|_1} b^{2\alpha \sum_{i \in u_{\bsr}} j_i} b^{-\sum_{i \in u_{\bsr}} j_i}  \gamma_u(J)^{2} V^2_{\bsalpha,\bsgamma}(f).
\end{eqnarray*}

We now show $(ii)$ for $\bsr = \bszero$. Notice that for $\bsk = 0$ we have $$w_{\bsj,\bszero,\bsl}(\bsx) = b^{-(j_1 + \cdots + j_s)/2} 1_{[b^{\bsj} \star \bsl, b^{\bsj} \star (\bsl+\boldsymbol{1} ))}(\bsx),$$ hence
\begin{eqnarray*}
\sigma_{\bsj,\bszero,\bsell} & = & |\widehat{f}_{\bsj,\bszero,\bsl}|  =   \left|\int_{\mathbb{R}^s}
f(\bsx) \overline{w_{\bsj,\bszero,\bsl}(\bsx)} \, {\rm d} \bsx
\right| \\ & = &  b^{-(j_1+\cdots + j_s) /2} \left|\int_{[b^{\bsj}
\star \bsl, b^{\bsj} \star (\bsl + \boldsymbol{1}))} f(\bsx) \, {\rm
d} \bsx \right| \\ & \le &  b^{-(j_1 + \cdots + j_s)/2} \int_{[b^{\bsj} \star \bsell, b^{\bsj} \star (\bsell + \boldsymbol{1}))} |f(\bsx)| \rd \bsx \\ & \le & b^{-(j_1 + \cdots + j_s)/2} \left( \int_{[b^{\bsj} \star \bsell, b^{\bsj} \star (\bsell + \boldsymbol{1}))} 1 \rd \bsx \right)^{1/2} \left(\int_{[b^{\bsj} \star \bsell, b^{\bsj} \star (\bsell + \boldsymbol{1}))} |f(\bsx)|^2 \rd \bsx \right)^{1/2} \\ & \le & \gamma_\emptyset(J) V_{\bsalpha,\bsgamma}(f).
\end{eqnarray*}

Now assume $\bsr \neq \bszero$ and let $A_{\bsr} = \{\bsa = (a_1,\ldots, a_s) \in \mathbb{N}_0^s: 0 \le a_i < b^{r_i} \mbox{ for } 1 \le i \le s\}$. For some $\bsa \in A_{\bsr}$ let $\bsx \in [b^{\bsj} \star \bsell + b^{\bsj - \bsr} \star \bsa, b^{\bsj} \star \bsell + b^{\bsj-\bsr} \star (\bsa + \bsone))$ and
\begin{eqnarray*}
g_{\bsj,\bsr,\bsell}(\bsx) & = & \sum_{k_1 = 0}^{b^{r_1}-1} \cdots \sum_{k_s = 0}^{b^{r_s}-1} \widehat{f}_{\bsj,\bsk,\bsell} w_{\bsj,\bsk,\bsell}(\bsx) \\ & = & \int_{[b^{\bsj} \star \bsell, b^{\bsj} \star (\bsell + \bsone))} \sum_{k_1 =  0}^{b^{r_1}-1} \cdots \sum_{k_s = 0}^{b^{r_s}-1} f(\bsy) w_{\bsj,\bsk,\bsell}(\bsx) \overline{w_{\bsj,\bsk,\bsell}(\bsy)} \rd \bsy \\ & = & b^{-j_1 - \cdots - j_s} \sum_{k_1 =  0}^{b^{r_1}-1} \cdots \sum_{k_s = 0}^{b^{r_s}-1} \\ && \qquad \int_{[b^{\bsj} \star \bsell, b^{\bsj} \star (\bsell + \bsone))}  f(\bsy) \walb_{\bsk}(b^{-\bsj} \star \bsx - \bsell) \overline{\walb_{\bsk}(b^{-\bsj} \star \bsy - \bsell)} \rd \bsy \\ & = &  b^{-j_1 - \cdots - j_s}  \int_{[b^{\bsj} \star \bsell, b^{\bsj} \star (\bsell + \bsone))} f(\bsy) \sum_{k_1 =  0}^{b^{r_1}-1} \cdots \sum_{k_s = 0}^{b^{r_s}-1}   \walb_{\bsk}(b^{-\bsj} \star (\bsx \ominus \bsy)) \rd \bsy \\ & = &  b^{-j_1 - \cdots - j_s + r_1 + \cdots + r_s}  c_{\bsr,\bsa},
\end{eqnarray*}
where
\begin{equation*}
c_{\bsr,\bsa} = \int_{[b^{\bsj} \star \bsell + b^{\bsj - \bsr} \star \bsa , b^{\bsj} \star \bsell + b^{\bsj-\bsr} \star (\bsa + \bsone))} f(\bsy) \rd \bsy.
\end{equation*}

Let now  $g(\bsx) = 0$ for $\bsx \notin  [b^{\bsj} \star \bsell, b^{\bsj} \star (\bsell + \boldsymbol{1}))$ and otherwise let
\begin{eqnarray*}
g(\bsx) & = & \sum_{k_1 = b^{r_1-1}}^{b^{r_1}-1} \cdots \sum_{k_s =
b^{r_s-1}}^{b^{r_s}-1} \widehat{f}_{\bsj,\bsk,\bsell}
w_{\bsj,\bsk,\bsell}(\bsx) \\ & = &  \sum_{u \subseteq u_{\bsr}} (-1)^{|u|} g_{\bsj,\bsr - (\bsone_u, \bszero_{S \setminus u}), \bsell}(\bsx).
\end{eqnarray*}

Then $g$ is constant on intervals of the form $[b^{\bsj} \star \bsell + b^{\bsj - \bsr} \star \bsa, b^{\bsj} \star \bsell + b^{\bsj-\bsr} \star (\bsa + \bsone))$ and therefore
\begin{eqnarray*}
\sigma_{\bsj,\bsr,\bsell}^2(f) & = & \int_{\mathbb{R}^s} |g(\bsx)|^2
\rd \bsx \\ & = & \int_{[b^{\bsj} \star \bsell, b^{\bsj} \star
(\bsell + \boldsymbol{1}))} |g(\bsx)|^2 \rd \bsx \\ & = & \sum_{\bsa \in A_{\bsr}} \int_{[b^{\bsj} \star \bsell + b^{\bsj-\bsr} \star \bsa, b^{\bsj} \star \bsell + b^{\bsj - \bsr} \star (\bsa + \bsone))} |g(\bsx)|^2 \rd \bsx \\ & = & b^{- (j_1 + \cdots + j_s) + r_1 + \cdots + r_s} \sum_{\bsa \in A_{\bsr}}  \left|\sum_{u \subseteq u_{\bsr}} (-1)^{|u|} c_{\bsr - (\bsone_u, \bszero_{S \setminus u}), (\lfloor \bsa_u/b \rfloor, \bsa_{S \setminus u})} \right|^2,
\end{eqnarray*}
where $(\lfloor \bsa_\uu /b \rfloor, \bsa_{\cS \setminus \uu})$ is the vector whose $i$th component is $\lfloor a_i/b \rfloor$ for $i \in \uu$ and $a_i$ otherwise. Let
\begin{equation*}
d_{\bsr,\bsa} = \int_{[b^{\bsj} \star \bsell + b^{\bsj - \bsr} \star \bsa, b^{\bsj} \star \bsell + b^{\bsj-\bsr} \star (\bsa + \bsone))} |f(\bsy)| \rd \bsy.
\end{equation*}
Then
\begin{eqnarray*}
\sigma_{\bsj,\bsr,\bsell}^2(f) & \le & b^{-(j_1 + \cdots + j_s) + r_1 + \cdots + r_s} 4^{|\uu_{\bsr}|} \sum_{\bsa \in A_{\bsr}} d_{\bsr-\bsone, \lfloor \bsa/b \rfloor}^2 \\ & \le & b^{-(j_1 + \cdots + j_s) + r_1 + \cdots + r_s} 4^{|\uu_{\bsr}|} \\ && \times \sum_{\bsa \in A_{\bsr}} \int_{[b^{\bsj} \star \bsell + b^{\bsj - \bsr + (\bsone_{u_{\bsr}}, \bszero)} \star \lfloor \bsa/b \rfloor, b^{\bsj} \star \bsell + b^{\bsj-\bsr + (\bsone_{\uu_{\bsr}},\bszero)} \star (\lfloor \bsa/b \rfloor + (\bsone_{u_{\bsr}},\bszero) ))} 1 \rd \bsy  \\ && \times \int_{[b^{\bsj} \star \bsell + b^{\bsj - \bsr + (\bsone_{u_{\bsr}},\bszero)} \star \lfloor \bsa/b \rfloor, b^{\bsj} \star \bsell + b^{\bsj-\bsr + (\bsone_{u_{\bsr}}, \bszero)} \star (\lfloor \bsa/b \rfloor + (\bsone_{u_{\bsr}}, \bszero)))} |f(\bsy)|^2 \rd \bsy \\ & \le & b^{|u_{\bsr}|} 4^{|u_{\bsr}|} \sum_{\bsa \in A_{\bsr}} \int_{[b^{\bsj} \star \bsell + b^{\bsj - \bsr + (\bsone_{u_{\bsr}},\bszero) } \star \lfloor \bsa/b \rfloor, b^{\bsj} \star \bsell + b^{\bsj-\bsr + (\bsone_{\uu_{\bsr}},\bszero)} \star (\lfloor \bsa/b \rfloor + (\bsone_{u_{\bsr}},\bszero) ))} |f(\bsy)|^2 \rd \bsy \\ & = &  b^{2|u_{\bsr}|} 4^{|u_{\bsr}|} \int_{[b^{\bsj} \star \bsell, b^{\bsj} \star (\bsell + \bsone))} |f(\bsy)|^2 \rd \bsy \\ & \le & b^{2|u_{\bsr}|} 4^{|u_{\bsr}|} \gamma^2_{\emptyset}(J) V^2_{\bsalpha,\bsgamma}(f).
\end{eqnarray*}
\end{proof}

We analyze now the behavior of the Walsh coefficients in terms of
$\bsalpha, \bsgamma$. Let $J = [b^{\bsj} \star \bsell, b^{\bsj} \star (\bsell + \bsone))$.
\begin{itemize}
\item[(i)] Roughly speaking, the parameter $\alpha_u$ controls how fast $\sigma_{\bsj,\bsr,\bsl}(f)$ decays when the location $J$ (i.e. the support of the Walsh function $w_{\bsj,\bsk,\bsl}$) is fixed but the frequency $\bsr$ increases. This follows from $(i)$ of Lemma~\ref{lem_boundw} because of the factor $b^{-\alpha_{u_{\bsr}}(J) |\bsr|_1}$; note that the dependence of $\alpha_{u_{\bsr}}(J)$ on the location is limited, since $\alpha_{u_{\bsr}}(J) \le 1$.
\item[(ii)] The function $\gamma_{\emptyset}$ controls how fast $\sigma_{\bsj,\bsr,\bsl}(f)$ decays when the frequency $\bsk$ (or $\bsr$) is fixed, but the location $J$ changes. This is modelled through the behavior of $\gamma_{\emptyset,\bsj,\bsl}$ and follows from $(ii)$ of Lemma~\ref{lem_boundw}.
\item[(iii)] Let $\emptyset \neq u \subseteq S$ be fixed and let $\bsr = (\bsr_u,\bszero)$ with $\bsr_u \in \mathbb{N}^{|u|}$. Now consider a change of location and frequency in the coordinates in $u$ simultaneously. From Lemma~\ref{lem_boundw} $(i)$ it follows that $\sigma_{\bsj,\bsr,\bsl}(f)$ decays with $b^{-\alpha_u |\bsr|_1} \gamma_{u}(J)$. Hence if $\gamma_{u}(J)$ decreases as $J$ moves towards infinity, then the diagonal elements, where the frequency and location increase simultaneously, decay faster than if just the frequency increases.
\end{itemize}

We prove a result concerning the convergence of the Walsh series for functions $f$ with $V_{\bsalpha,\bsgamma}(f) < \infty$. The result is analogous to \cite[Theorem~XVI]{Fine}.

\begin{theorem}\label{thm_conv}
Let $\bsalpha$ be local smoothness parameters such that
\begin{equation*}
\inf_{J \in \Pcal_{\mathbb{R}^s}} \alpha_u(J) > 1/2
\end{equation*}
for all $\emptyset \neq u \subseteq S$ and let $\bsgamma$ be
local weight parameters. Let $f \in H_{\bsalpha,\bsgamma}$. Let $\boldsymbol{D} \subset \mathbb{Z}^{2s}$ be such that $\{[b^{\bsj} \star \bsl, b^{\bsj} \star (\bsl + \boldsymbol{1})): (\bsj,\bsl) \in \boldsymbol{D}\}$ is a partition of $\mathbb{R}^s$ and consider the set of tiles
\begin{equation*}
\{T_{\bsj,\bsk,\bsl}: \lfloor b^{r_i-1} \rfloor \le k_i < b^{r_i} \mbox{ for } 1 \le i \le s \mbox{ for some } (\bsj,\bsl) \in \boldsymbol{D}, \bsr \in \mathbb{N}_0^s\},
\end{equation*}
which forms a partition of $(\mathbb{R} \times \mathbb{R}_0^+)^s$. Further assume that for all $u \subseteq S$ we have
\begin{equation*}
\sum_{(\bsj,\bsl) \in \boldsymbol{D}} b^{\alpha_{u}(J) \sum_{i \in u} j_i - \sum_{i \notin u} j_i/2} \gamma_{u}(J) < \infty,
\end{equation*}
where $J = [b^{\bsj} \star \bsell, b^{\bsj} \star (\bsell + \bsone))$.

Then the Walsh series
\begin{equation*}
\sum_{(\bsj,\bsl) \in \boldsymbol{D}} \sum_{\bsk \in \mathbb{N}_0^s} \widehat{f}_{\bsj,\bsk,\bsl} w_{\bsj,\bsk,\bsl}(\bsx)
\end{equation*}
converges absolutely and we have
\begin{equation*}
f(\bsx) = \sum_{(\bsj,\bsl) \in \boldsymbol{D}} \sum_{\bsk \in \mathbb{N}_0^s} \widehat{f}_{\bsj,\bsk,\bsl} w_{\bsj,\bsk,\bsl}(\bsx)
\end{equation*}
for all $\bsx \in \mathbb{R}^s$.
\end{theorem}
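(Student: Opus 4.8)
The plan is to combine the structural facts from Section~\ref{sec_Walshmodel} with the blockwise decay estimates of Lemma~\ref{lem_boundw} and the classical pointwise theory of Walsh--Fourier series. First I would record the setup. Since the tiles $T_{\bsj,\bsk,\bsl}$ with $(\bsj,\bsl)\in\boldsymbol{D}$ and $\bsk\in\mathbb{N}_0^s$ partition $(\mathbb{R}\times\mathbb{R}_0^+)^s$, the tensor versions of Lemma~\ref{lem_orthogonal} and of the completeness lemma show that $\{w_{\bsj,\bsk,\bsl}:(\bsj,\bsl)\in\boldsymbol{D},\ \bsk\in\mathbb{N}_0^s\}$ is a complete orthonormal system in $L_2(\mathbb{R}^s)$; as $f\in H_{\bsalpha,\bsgamma}$ satisfies $\|f\|_{L_2}\le (\sup_J\gamma_\emptyset(J))\,V_{\bsalpha,\bsgamma}(f)<\infty$, Parseval gives $L_2$-convergence of the series to $f$. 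The decisive observation for the pointwise claims is that the intervals $J=[b^\bsj\star\bsl,b^\bsj\star(\bsl+\bsone))$, $(\bsj,\bsl)\in\boldsymbol{D}$, partition $\mathbb{R}^s$, while $w_{\bsj,\bsk,\bsl}(\bsx)=0$ unless $\bsx\in J$; hence for each fixed $\bsx$ exactly one index $(\bsj,\bsl)$ contributes to the double sum.

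Next I would prove absolute convergence. Fixing $\bsx$ and the unique active $(\bsj,\bsl)$, and using $|w_{\bsj,\bsk,\bsl}(\bsx)|=b^{-(j_1+\cdots+j_s)/2}$, it suffices to show $\sum_{\bsk\in\mathbb{N}_0^s}|\widehat f_{\bsj,\bsk,\bsl}|<\infty$. I would group $\bsk$ into the dyadic blocks $\lfloor b^{r_i-1}\rfloor\le k_i<b^{r_i}$ indexed by $\bsr\in\mathbb{N}_0^s$; each block has at most $b^{|\bsr|_1}$ elements, so Cauchy--Schwarz gives $\sum_{\text{block }\bsr}|\widehat f_{\bsj,\bsk,\bsl}|\le b^{|\bsr|_1/2}\sigma_{\bsj,\bsr,\bsl}(f)$. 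Inserting Lemma~\ref{lem_boundw}(i) for $\bsr\neq\bszero$ (and (ii) for $\bsr=\bszero$) produces a factor $b^{(1/2-\alpha_{u_{\bsr}}(J))|\bsr|_1}$ times a location-dependent constant. Because $\inf_J\alpha_u(J)>1/2$, the exponent $1/2-\alpha_{u_{\bsr}}(J)$ is bounded above by a negative constant, so the sum over all $\bsr$ with fixed support $u_{\bsr}=u$ is a convergent geometric series; summing over the finitely many $u\subseteq S$ yields $\sum_\bsk|\widehat f_{\bsj,\bsk,\bsl}|<\infty$. This is the crux, and the hypothesis $\alpha_u>1/2$ is exactly what is needed here. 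The global condition $\sum_{(\bsj,\bsl)}b^{\alpha_u(J)\sum_{i\in u}j_i-\sum_{i\notin u}j_i/2}\gamma_u(J)<\infty$ then ensures that the contributions of the infinitely many blocks $J$ have finite total, so that the rearrangement into the stated double sum is legitimate.

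I would establish the pointwise identity by reducing each block to the classical theory on $[0,1)^s$. For $\bsx\in J$ set $\bsy=b^{-\bsj}\star\bsx-\bsl\in[0,1)^s$ and $g(\bsy)=f(b^\bsj\star(\bsy+\bsl))$; exactly as in the computation opening the proof of Lemma~\ref{lem_boundw}, one finds $\widehat f_{\bsj,\bsk,\bsl}\,w_{\bsj,\bsk,\bsl}(\bsx)=\widehat g_\bsk\,\walb_\bsk(\bsy)$, where $\widehat g_\bsk$ are the ordinary Walsh coefficients of the continuous function $g$. By the previous step $\sum_\bsk\widehat g_\bsk\,\walb_\bsk(\bsy)$ converges absolutely, hence uniformly, to a continuous function; since it also converges to $g$ in $L_2([0,1)^s)$ and $g$ is continuous, the two limits coincide, so the series equals $g(\bsy)=f(\bsx)$ (the analogue of \cite[Theorem~XVI]{Fine}). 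As each $\bsx\in\mathbb{R}^s$ lies in exactly one $J$, the identity $f(\bsx)=\sum_{(\bsj,\bsl)\in\boldsymbol{D}}\sum_{\bsk}\widehat f_{\bsj,\bsk,\bsl}\,w_{\bsj,\bsk,\bsl}(\bsx)$ follows for every $\bsx$.

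I expect the main obstacle to be this last step: upgrading $L_2$ (hence almost everywhere) convergence to convergence at every single point. The absolute convergence of the block series supplies a continuous pointwise limit, and identifying this limit with $f$ relies on the continuity of $f$ together with the classical fact that the square partial sums of a Walsh--Fourier series are local averages converging to the function at every point of continuity; carrying this argument cleanly through the tensor-product and the rescaling reduction is the delicate part.
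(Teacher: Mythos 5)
Your proposal is correct and follows essentially the same route as the paper: absolute convergence via Cauchy--Schwarz over the dyadic blocks combined with Lemma~\ref{lem_boundw} and the hypotheses $\inf_J\alpha_u(J)>1/2$ and summability over $(\bsj,\bsl)$, followed by identification of the pointwise limit with $f$ through the classical convergence of square partial sums of Walsh series for continuous functions. The only difference is presentational: the paper dispatches your ``delicate last step'' by citing \cite{SWS} (see also \cite[Appendix~A.3]{DP09}) for the convergence of the partial sums to $f(\bsx)$, whereas you spell out the rescaling to $[0,1)^s$ and the uniform-limit argument explicitly.
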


\begin{proof}
From Lemma~\ref{lem_boundw} and the Cauchy-Schwarz inequality we obtain
\begin{eqnarray*}
\lefteqn{\sum_{k_1 = \lfloor b^{r_1-1} \rfloor}^{b^{r_1}-1} \cdots \sum_{k_s = \lfloor b^{r_s-1} \rfloor}^{b^{r_s}-1} |\widehat{f}_{\bsj,\bsk,\bsl}| } \\ & \le & \sigma_{\bsj,\bsr,\bsl} \left(\sum_{k_1 = \lfloor b^{r_1-1} \rfloor}^{b^{r_1}-1} \cdots \sum_{k_s = \lfloor b^{r_s-1} \rfloor}^{b^{r_s}-1} 1 \right)^{1/2} \\ & \le & (b-1)^{\alpha_{u_{\bsr}}(J) |u_{\bsr}|}
b^{(1/2-\alpha_{u_{\bsr}}(J)) |\bsr|_1} b^{\alpha_{u_{\bsr}}(J) \sum_{i \in u_{\bsr}} j_i - \sum_{i \notin u_{\bsr}} j_i/2} \gamma_{u_{\bsr}}(J) V_{\bsalpha,\bsgamma}(f).
\end{eqnarray*}
By the assumptions of the theorem, the last expression is summable and hence the Walsh series is absolutely convergent.

Since the Walsh series converges absolutely, its partial sums form a Cauchy sequence. In \cite{SWS} (or see also \cite[Appendix~A.3]{DP09}) it was shown that the sums
\begin{equation*}
\sum_{(\bsj,\bsl) \in \boldsymbol{D}} \sum_{\bsk \in \prod_{i=1}^s \{0,\ldots, b^{r_i}-1 \}} \widehat{f}_{\bsj,\bsk,\bsl} w_{\bsj,\bsk,\bsl}(\bsx)
\end{equation*}
converge to $f(\bsx)$ as $r_1,\ldots, r_s \rightarrow \infty$. Hence the convergence of the Walsh series to $f$ follows.
\end{proof}

We assume throughout the paper that the assumptions of Theorem~\ref{thm_conv} are satisfied.

\section{Numerical integration}\label{sec_num_int}

In this section we study the worst-case error for numerical integration in the unit ball of $H_{\bsalpha,\bsgamma}$, that is,
\begin{equation*}
e(H_{\bsalpha,\bsgamma},Q_{P,\Lambda}) = \sup_{f \in
H_{\bsalpha,\bsgamma}, V_{\alpha,\bsgamma}(f) \le 1}
\left|Q_{P,\Lambda}(f) - \int_{\mathbb{R}^s} f(\bsx) \, {\rm d} \bsx
\right|.
\end{equation*}
Here, the quadrature formula is of the form
\begin{equation*}
Q_{P,\Lambda}(f) = \sum_{n=0}^{N-1} \lambda_n f(\bsx_n),
\end{equation*}
where $\lambda_0,\ldots, \lambda_{N-1}$ are positive weights and $\bsx_0,\ldots, \bsx_{N-1} \in \mathbb{R}^s$ are quadrature points. The guiding principle for choosing the weights is the idea to have equal weight quadrature rules locally on elementary intervals of $\mathbb{R}^s$ which yield a small integration error. Let $J$ be an elementary interval from a given a partition of $\mathbb{R}^s$ into elementary intervals. Let $N_J$ be the number of quadrature points in $J$, then the weight corresponding to those quadrature points is given by $\mathrm{Vol}(J) N_J^{-1}$.

We consider tilings of the phase plane which allow us to use Lemma~\ref{lem_boundw}. For a location fixed by $\bsj,\bsl \in \mathbb{Z}^s$ we include all frequencies $\bsk \in \mathbb{N}_0^s$. Let $\boldsymbol{D} \subset \mathbb{Z}^{2s}$ be such that the intervals $[b^{\bsj} \star \bsl, b^{\bsj} \star (\bsl + \boldsymbol{1}))$ for $(\bsj,\bsl) \in \boldsymbol{D}$ form a partition of $\mathbb{R}^s$. The set
\begin{equation*}
\boldsymbol{B} = \{(\bsj,\bsk,\bsl): (\bsj,\bsl) \in \boldsymbol{D}, \bsk \in \mathbb{N}_0^s\}
\end{equation*}
defines a disjoint tiling $\{T_{\bsj,\bsk,\bsl}: (\bsj,\bsk,\bsl) \in \boldsymbol{B}\}$,  which covers $(\mathbb{R} \times \mathbb{R}_0^+)^s$, that is,
\begin{itemize}
\item $T_{\bsj,\bsk,\bsl} \cap T_{\bsj',\bsk',\bsl'} = \emptyset$ for all $(\bsj,\bsk,\bsl), (\bsj',\bsk',\bsl') \in \boldsymbol{B}$ with $(\bsj,\bsk,\bsl) \neq (\bsj',\bsk',\bsl')$, and
\item $\bigcup_{(\bsj,\bsk,\bsl) \in \boldsymbol{B}} T_{\bsj,\bsk,\bsl} = (\mathbb{R} \times \mathbb{R}_0^+)^s$.
\end{itemize}

This ensures that the corresponding system
\begin{equation*}
\{w_{\bsj,\bsk,\bsl}: (\bsj,\bsk,\bsl) \in \boldsymbol{B}\}
\end{equation*}
is a complete orthonormal system of $L_2(\mathbb{R}^s)$.

For $(\bsj,\bsr,\bsl) \in \boldsymbol{B}$ let
\begin{equation*}
\delta_{\bsj,\bsr,\bsl} = \left( \sum_{k_1= \lfloor b^{r_1-1} \rfloor}^{b^{r_1}-1} \cdots \sum_{k_s = \lfloor b^{r_s-1}\rfloor}^{b^{r_s}-1} \left| \sum_{n=0}^{N-1} \lambda_n w_{\bsj,\bsk,\bsl}(\bsx_n) - \int_{\mathbb{R}^s} w_{\bsj,\bsk,\bsl}(\bsx) \, {\rm d} \bsx \right|^2\right)^{1/2}.
\end{equation*}

\begin{theorem}\label{thm_boundinterror}
Let $\bsalpha$ be local smoothness parameters and $\bsgamma$ be
local weight parameters. Let $Q_{P, \Lambda}$ be a
quadrature rule and let $\boldsymbol{B}$, $\delta_{\bsj,\bsr,\bsl}$,
and $\sigma_{\bsj,\bsr,\bsl}(f)$ be defined as above. For $\bsr \in
\mathbb{N}_0^s$ let $u_{\bsr} = \{i \in S: r_i \neq 0\}$. Then we
have
\begin{eqnarray*}
\lefteqn{ e(H_{\bsalpha,\bsgamma},Q_{P,\Lambda})
\le  \sum_{(\bsj,\bsr,\bsl) \in
\boldsymbol{B}} \sigma_{\bsj,\bsr,\bsl}(f) \delta_{\bsj,\bsr,\bsl} }
\\  & \le & \sum_{(\bsj,\bsl) \in \boldsymbol{D}} \gamma_\emptyset(J) \delta_{\bsj,\bszero, \bsell} \\ && +  \sum_{(\bsj,\bsl) \in \boldsymbol{D}}  \sum_{\bsr \in \mathbb{N}_0^s \setminus \{\bszero\}} (b-1)^{\alpha_{u_{\bsr}}(J) |u_{\bsr}|}
b^{-\alpha_{u_{\bsr}}(J) |\bsr|_1} b^{\alpha_{u_{\bsr}}(J) \sum_{i \in u_{\bsr}} j_i} b^{- \sum_{i \notin u_{\bsr}} j_i/2} \gamma_{u_{\bsr}}(J) \delta_{\bsj,\bsr,\bsl}.
\end{eqnarray*}
\end{theorem}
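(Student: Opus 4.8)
The plan is to exploit the fact that the Walsh system $\{w_{\bsj,\bsk,\bsl}: (\bsj,\bsk,\bsl) \in \boldsymbol{B}\}$ is a complete orthonormal system of $L_2(\mathbb{R}^s)$, so that $f$ admits the absolutely convergent expansion $f = \sum_{(\bsj,\bsk,\bsl) \in \boldsymbol{B}} \widehat{f}_{\bsj,\bsk,\bsl}\, w_{\bsj,\bsk,\bsl}$ guaranteed by Theorem~\ref{thm_conv}. The key observation is that the quadrature error is a linear functional in $f$, namely $Q_{P,\Lambda}(f) - \int_{\mathbb{R}^s} f\rd\bsx$, so I would first substitute the Walsh expansion into this functional. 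Because the expansion converges absolutely, I can interchange the (linear, bounded) error functional with the infinite sum and obtain
\begin{equation*}
Q_{P,\Lambda}(f) - \int_{\mathbb{R}^s} f(\bsx)\rd\bsx = \sum_{(\bsj,\bsk,\bsl) \in \boldsymbol{B}} \widehat{f}_{\bsj,\bsk,\bsl} \left( \sum_{n=0}^{N-1} \lambda_n w_{\bsj,\bsk,\bsl}(\bsx_n) - \int_{\mathbb{R}^s} w_{\bsj,\bsk,\bsl}(\bsx)\rd\bsx\right).
\end{equation*}
Here I would need to check that interchanging summation and integration is justified, which follows from the absolute convergence established in Theorem~\ref{thm_conv} together with the finiteness of the quadrature sum (only $N$ terms, bounded weights). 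I would also need to know that the error functional vanishes on the tails in a controlled way; since the statement is an upper bound rather than an identity, a triangle inequality after the interchange suffices and no delicate tail analysis is needed.

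Next I would apply the triangle inequality over the tiles $(\bsj,\bsr,\bsl) \in \boldsymbol{B}$, grouping the frequencies $\bsk$ according to the dyadic-type blocks $\lfloor b^{r_i-1} \rfloor \le k_i < b^{r_i}$. For each fixed block indexed by $(\bsj,\bsr,\bsl)$, I would apply the Cauchy--Schwarz inequality to the inner sum over $\bsk$, splitting the product $\widehat{f}_{\bsj,\bsk,\bsl} \cdot (\text{error term})$ into the two $\ell_2$ norms over the block. The first $\ell_2$ norm is exactly $\sigma_{\bsj,\bsr,\bsl}(f)$ by definition, and the second is exactly $\delta_{\bsj,\bsr,\bsl}$ by definition. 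This yields the first claimed inequality
\begin{equation*}
e(H_{\bsalpha,\bsgamma},Q_{P,\Lambda}) \le \sum_{(\bsj,\bsr,\bsl) \in \boldsymbol{B}} \sigma_{\bsj,\bsr,\bsl}(f)\, \delta_{\bsj,\bsr,\bsl},
\end{equation*}
after taking the supremum over $f$ in the unit ball $V_{\bsalpha,\bsgamma}(f) \le 1$.

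For the second inequality, I would simply insert the two bounds on $\sigma_{\bsj,\bsr,\bsl}(f)$ from Lemma~\ref{lem_boundw}: part $(ii)$ with $\bsr = \bszero$ handles the terms indexed by $(\bsj,\bszero,\bsl)$, contributing $\gamma_\emptyset(J)\, \delta_{\bsj,\bszero,\bsell}$ (using $V_{\bsalpha,\bsgamma}(f) \le 1$ and absorbing the constants $b^{|u_{\bszero}|} 2^{|u_{\bszero}|} = 1$ since $u_{\bszero} = \emptyset$), while part $(i)$ handles all $\bsr \in \mathbb{N}_0^s \setminus \{\bszero\}$, giving the factor $(b-1)^{\alpha_{u_{\bsr}}(J)|u_{\bsr}|} b^{-\alpha_{u_{\bsr}}(J)|\bsr|_1} b^{\alpha_{u_{\bsr}}(J)\sum_{i \in u_{\bsr}} j_i} b^{-\sum_{i \notin u_{\bsr}} j_i/2} \gamma_{u_{\bsr}}(J)$. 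Splitting the sum over $\boldsymbol{B}$ into these two regimes and bounding term by term then produces exactly the stated double sum over $(\bsj,\bsl) \in \boldsymbol{D}$ and $\bsr$.

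The main obstacle I anticipate is the rigorous justification of the interchange of the error functional with the infinite Walsh series, particularly the interchange of the integral $\int_{\mathbb{R}^s}$ with the infinite sum over an unbounded domain. The absolute convergence from Theorem~\ref{thm_conv} controls pointwise behavior, but to move the integral inside one wants a dominated-convergence or Fubini-type argument; the cleanest route is to note that the functions $w_{\bsj,\bsk,\bsl}$ have disjoint bounded supports within each location block, so that $\int_{\mathbb{R}^s} w_{\bsj,\bsk,\bsl}\rd\bsx$ is nonzero only for $\bsk = \bszero$ and the integral of the series reduces to a sum already known to converge absolutely. Everything downstream of the interchange is routine Cauchy--Schwarz bookkeeping and direct substitution of Lemma~\ref{lem_boundw}.
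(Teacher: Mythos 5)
Your proposal is correct and follows essentially the same route as the paper: expand $f$ in the Walsh series over $\boldsymbol{B}$, apply the error functional term by term, group frequencies into the blocks $\lfloor b^{r_i-1}\rfloor \le k_i < b^{r_i}$, use Cauchy--Schwarz on each block to produce $\sigma_{\bsj,\bsr,\bsl}(f)\,\delta_{\bsj,\bsr,\bsl}$, and then insert Lemma~\ref{lem_boundw}. Your observation that the $\bsr=\bszero$ terms require part $(ii)$ of that lemma (with $u_{\bszero}=\emptyset$ so the constant is $1$) is in fact slightly more careful than the paper, which cites only part $(i)$.
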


\begin{proof}
Let
\begin{equation*}
f(\bsx) = \sum_{(\bsj,\bsk,\bsl) \in \boldsymbol{B}} \widehat{f}_{\bsj,\bsk,\bsl} w_{\bsj,\bsk,\bsl}(\bsx).
\end{equation*}

Consider the quadrature formula
\begin{equation*}
Q_{P,\Lambda}(f) = \sum_{n=0}^{N-1} \lambda_n f(\bsx_n).
\end{equation*}
We have
\begin{eqnarray*}
\lefteqn{ Q_{P,\Lambda}(f) - \int_{\mathbb{R}^s}  f(\bsx) \,{\rm d}
\bsx } \\ & = & \sum_{(\bsj,\bsk,\bsl) \in \boldsymbol{B}}
\widehat{f}_{\bsj,\bsk,\bsl} \left[ \sum_{n=0}^{N-1} \lambda_n
w_{\bsj,\bsk,\bsl}(\bsx_n) - \int_{\mathbb{R}^s}
w_{\bsj,\bsk,\bsl}(\bsx) \, {\rm d} \bsx \right] \\ & = &
\sum_{(\bsj,\bsr,\bsl) \in \boldsymbol{B}} \sum_{k_1 = \lfloor
b^{r_1-1} \rfloor}^{b^{r_1}-1} \cdots \sum_{k_s = \lfloor b^{r_s-1}
\rfloor}^{b^{r_s}-1} \widehat{f}_{\bsj,\bsk,\bsl} \left[
\sum_{n=0}^{N-1} \lambda_n w_{\bsj,\bsk,\bsl}(\bsx_n) -
\int_{\mathbb{R}^s} w_{\bsj,\bsk,\bsl}(\bsx) \, {\rm d} \bsx
\right].
\end{eqnarray*}
Using the Cauchy-Schwarz inequality we obtain
\begin{eqnarray*}
\left| Q_{P,\Lambda}(f) - \int_{\mathbb{R}^s}  f(\bsx) \,{\rm d} \bsx \right| & \le & \sum_{(\bsj,\bsr,\bsl) \in \boldsymbol{B}} \sigma_{\bsj,\bsr,\bsl}(f) \delta_{\bsj,\bsr,\bsl}.
\end{eqnarray*}
The second bound follows by using Lemma~\ref{lem_boundw} $(i)$.
\end{proof}

In the bound on the integration error in Theorem~\ref{thm_boundinterror} the only factor which depends on the quadrature points $P = \{\bsx_0,\ldots, \bsx_{N-1}\}$ and weights $\Lambda = \{\lambda_0,\ldots, \lambda_{N-1}\}$ is $\delta_{\bsj,\bsr,\bsl}$. In the following lemma we show that a certain choice of weights will guarantee that many $\delta_{\bsj,\bszero,\bsl}$ are zero.

\begin{lemma}\label{lem_weights}
Let $P =  \{\bsx_0,\ldots, \bsx_{N-1}\} \subset \mathbb{R}^s$ be a set of quadrature points.  For $(\bsj,\bsl) \in\boldsymbol{D}$ let $$N_{\bsj,\bsl} = \{0 \le n < N: \bsx_n \in [b^{\bsj} \star \bsl, b^{\bsj} \star (\bsl+\boldsymbol{1})) \}.$$ Then we have $\bigcup_{(\bsj,\bsl) \in \boldsymbol{D}} N_{\bsj,\bsl} = \{0, \ldots, N-1\}$ and $N_{\bsj,\bsl} \cap N_{\bsj',\bsl'} = \emptyset$ for $(\bsj,\bsl) \neq (\bsj',\bsl')$. For $n \in N_{\bsj,\bsl}$ let
\begin{equation}\label{def_weights}
\lambda_n = \lambda_{\bsj,\bsl} = \frac{b^{j_1+\cdots + j_s}}{|N_{\bsj,\bsl}|}.
\end{equation}
Then for all $(\bsj,\bsr,\bsl) \in \boldsymbol{B}$ we have:
\begin{itemize}
\item[(i)] If $|N_{\bsj,\bsl}| > 0$, then $\delta_{\bsj,\bszero,\bsl} = 0$;
\item[(ii)] If $|N_{\bsj,\bsl}| = 0$, then $\delta_{\bsj,\bszero, \bsl} = b^{(j_1 + \cdots + j_s)/2}$;
\item[(iii)] If $|N_{\bsj,\bsl}| = 0$ and $\bsr \in \mathbb{N}_0^s \setminus\{\bszero\}$, then $\delta_{\bsj,\bsr,\bsl} =  0$.
\end{itemize}
\end{lemma}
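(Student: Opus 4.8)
The plan is to evaluate the quadrature sum and the integral appearing in $\delta_{\bsj,\bsr,\bsl}$ directly, exploiting that for $\bsr=\bszero$ the summation over $\bsk$ collapses to the single term $\bsk=\bszero$, where $w_{\bsj,\bszero,\bsl}$ is simply a normalized indicator of $J = [b^{\bsj}\star\bsl, b^{\bsj}\star(\bsl+\bsone))$. The partition claim is immediate: since $\{[b^{\bsj}\star\bsl, b^{\bsj}\star(\bsl+\bsone)): (\bsj,\bsl)\in\boldsymbol{D}\}$ is by hypothesis a partition of $\mathbb{R}^s$, every point $\bsx_n$ lies in exactly one such interval, so the index sets $N_{\bsj,\bsl}$ are pairwise disjoint and cover $\{0,\ldots,N-1\}$.

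For parts (i) and (ii) I would first record, as already noted in the proof of Lemma~\ref{lem_boundw}, that $w_{\bsj,\bszero,\bsl}(\bsx) = b^{-(j_1+\cdots+j_s)/2}\,1_J(\bsx)$, whence $\int_{\mathbb{R}^s} w_{\bsj,\bszero,\bsl}(\bsx)\rd\bsx = b^{-(j_1+\cdots+j_s)/2}\Vol(J) = b^{(j_1+\cdots+j_s)/2}$, using $\Vol(J)=b^{j_1+\cdots+j_s}$. On the quadrature side, only the points indexed by $N_{\bsj,\bsl}$ fall in $J$ and hence contribute; each contributes $\lambda_{\bsj,\bsl}\,b^{-(j_1+\cdots+j_s)/2}$, and inserting the weight \eqref{def_weights} yields a total of exactly $b^{(j_1+\cdots+j_s)/2}$ whenever $|N_{\bsj,\bsl}|>0$. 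The bracketed difference therefore vanishes, giving $\delta_{\bsj,\bszero,\bsl}=0$ and proving (i). If instead $|N_{\bsj,\bsl}|=0$, the quadrature sum is empty, leaving only the integral term $b^{(j_1+\cdots+j_s)/2}$, which gives (ii).

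For part (iii) I would use two facts: the support of $w_{\bsj,\bsk,\bsl}$ equals $J$ for every $\bsk$, and $\int_{\mathbb{R}} w_{j,k,l}(x)\rd x = b^{j/2}\int_0^1 \walb_k(y)\rd y$ vanishes for $k\ge 1$ by orthogonality of $\walb_k$ to $\walb_0$. Since $|N_{\bsj,\bsl}|=0$, no quadrature point lies in $J$, so $w_{\bsj,\bsk,\bsl}(\bsx_n)=0$ for all $n$ and the quadrature sum is zero. Moreover, for $\bsr\neq\bszero$ at least one index $i$ has $r_i\ge 1$, forcing $k_i\ge \lfloor b^{r_i-1}\rfloor\ge 1$ throughout the summation range, so the tensor-product integral $\prod_{i=1}^s\int_{\mathbb{R}}w_{j_i,k_i,l_i}(x_i)\rd x_i$ contains a zero factor and vanishes. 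Both terms being zero yields $\delta_{\bsj,\bsr,\bsl}=0$.

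There is no genuine obstacle here; the argument is a direct evaluation. The only point requiring a moment's care is the weight \eqref{def_weights} when $|N_{\bsj,\bsl}|=0$, where the expression is formally $0/0$; this is harmless because no index lies in $N_{\bsj,\bsl}$, so such a weight is never assigned to an actual quadrature point and the corresponding contribution to every sum is simply absent.
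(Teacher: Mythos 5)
Your proposal is correct and follows essentially the same route as the paper: the partition claim is immediate, parts (i) and (ii) follow by direct evaluation of the quadrature sum and the integral of the normalized indicator $w_{\bsj,\bszero,\bsl}$, and part (iii) follows because no quadrature point lies in the support while the integral vanishes for $\bsk\neq\bszero$ (the paper phrases this by expanding $\delta_{\bsj,\bsr,\bsl}^2$ as a double sum over $n,n'$, but the substance is identical). Your remark about the formally undefined weight $\lambda_{\bsj,\bsl}$ when $|N_{\bsj,\bsl}|=0$ is a reasonable point of care that the paper leaves implicit.
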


\begin{proof}
Since the intervals $[b^{\bsj} \star \bsl, b^{\bsj} \star (\bsl + \boldsymbol{1}))$, for  $(\bsj, \bsl) \in \boldsymbol{D}$, form a partition of $\mathbb{R}^s$, the sets $N_{\bsj,\bsl}$ form a partition of $\{0, \ldots, N-1\}$.

We have
\begin{equation*}
\int_{-\infty}^\infty w_{j,k,l}(x) \,{\rm d} x = \left\{\begin{array}{ll} b^{j/2} & \mbox{if } k = 0, \\ 0 & \mbox{otherwise}. \end{array} \right.
\end{equation*}
Hence, for $\bsr = \bszero$ and $|N_{\bsj,\bsl}| > 0$ we have
\begin{equation}\label{eq_sumr0delta}
\delta_{\bsj,\bszero,\bsl} = \left|\sum_{n=0}^{N-1} \lambda_n w_{\bsj,\bszero,\bsl}(\bsx_n) - b^{(j_1 + \cdots + j_s)/2} \right|  = \left|\sum_{n \in N_{\bsj,\bsl}} \frac{b^{(j_1+\cdots + j_s)/2}}{|N_{\bsj,\bsl}|} - b^{(j_1 + \cdots + j_s)/2} \right| = 0.
\end{equation}
This shows $(i)$.

If $|N_{\bsj,\bsl}| = 0$, then the sum $\sum_{n=0}^{N-1} \lambda_n w_{\bsj,\bszero,\bsl}(\bsx_n) = 0$. Hence \eqref{eq_sumr0delta} implies $(ii)$.

For $\bsr \neq \bszero$ and $|N_{\bsj,\bsl}| = 0$ we have
\begin{eqnarray*}
\delta_{\bsj,\bsr,\bsl}^2 & = & \sum_{k_1= \lfloor b^{r_1-1} \rfloor}^{b^{r_1}-1} \cdots \sum_{k_s = \lfloor b^{r_s-1} \rfloor}^{b^{r_s}-1} \left| \sum_{n=0}^{N-1} \lambda_n w_{\bsj,\bsk,\bsl}(\bsx_n)\right|^2 \\ & = & \sum_{n,n'=0}^{N-1} \lambda_n \overline{\lambda_{n'}} \sum_{k_1 = \lfloor b^{r_1-1} \rfloor}^{b^{r_1}-1} \cdots \sum_{k_s = \lfloor b^{r_s-1}\rfloor}^{b^{r_s}-1} w_{\bsj,\bsk,\bsl}(\bsx_n) \overline{w_{\bsj,\bsk,\bsl}(\bsx_{n'})} \\ & = & 0,
\end{eqnarray*}
since no point of $P$ lies in the support of $w_{\bsj,\bsk,\bsl}$, which implies $(iii)$.
\end{proof}

For the remainder of the paper we assume the weights $\Lambda$ are
given by \eqref{def_weights}. In this case we have for $\bsr \neq
\bszero$ that
\begin{eqnarray}\label{eq_deltar}
\delta_{\bsj,\bsr,\bsl}^2 & = & b^{-j_1 - \cdots - j_s}
|\lambda_{\bsj,\bsl}|^2 \sum_{k_1  = \lfloor b^{r_1-1}
\rfloor}^{b^{r_1}-1} \cdots \sum_{k_s = \lfloor b^{r_s-1}
\rfloor}^{b^{r_s}-1} \sum_{n, n' \in N_{\bsj,\bsl}}   {\rm
wal}_{\bsk}((b^{-\bsj} \star \bsx_n - \bsl) \ominus (b^{-\bsj}
\bsx_{n'} - \bsl)) \nonumber \\ & = & b^{j_1+\cdots + j_s} \hspace{-2mm} \sum_{k_1= \lfloor
b^{r_1-1}\rfloor }^{b^{r_1}-1} \hspace{-2mm} \cdots \hspace{-2mm} \sum_{k_s = \lfloor b^{r_1-1}
\rfloor}^{b^{r_s}-1}  \frac{1}{N_{\bsj,\bsl}^2} \sum_{n,n' \in
N_{\bsj,\bsl}}  {\rm wal}_{\bsk}((b^{-\bsj} \star \bsx_n - \bsl)
\ominus (b^{-\bsj} \star \bsx_{n'} - \bsl)).
\end{eqnarray}

Using the bound in Theorem~\ref{thm_boundinterror} and Lemma~\ref{lem_weights} we obtain the following result.

\begin{lemma}\label{lem_bound2}
Let $\bsalpha$ be local smoothness parameters and $\bsgamma$ be
local weight parameters. Let $P = \{\bsx_0,\ldots,
\bsx_{N-1}\}$ be a set of quadrature points and $\Lambda =
\{\lambda_0,\ldots, \lambda_{N-1}\}$ be given by
\eqref{def_weights}. Let $\delta_{\bsj,\bsr,\bsl}$ be defined as
above. Then we have
\begin{eqnarray*}
\lefteqn{ e(H_{\bsalpha,\bsgamma}, Q_{P,\Lambda})  \le  \sum_{(\bsj,\bsl)
\in \boldsymbol{D}, |N_{\bsj,\bsl}| = 0} b^{(j_1+\cdots + j_s)/2}
\gamma_{\emptyset}(J) } \\ &&  + \sum_{\emptyset \neq u
\subseteq S}   \sum_{(\bsj,\bsl) \in \boldsymbol{D}, |N_{\bsj,\bsl}| > 0} (b-1)^{\alpha_{u}(J) |u|} b^{\alpha_{u}(J) \sum_{i\in u} j_i - \sum_{i \notin u} j_i/2}  \gamma_{u}(J) \sum_{\bsr_u \in \mathbb{N}^{|u|}} b^{-\alpha_{u}(J)
|\bsr|_1} \delta_{\bsj,(\bsr_u,\bszero),\bsl}.
\end{eqnarray*}
\end{lemma}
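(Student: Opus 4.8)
The plan is to feed the conclusion of Lemma~\ref{lem_weights} into the upper bound of Theorem~\ref{thm_boundinterror} and then to reorganize the resulting sum over frequencies $\bsr$ by grouping them according to their support $u_{\bsr}$. Since the weights $\Lambda$ are chosen as in \eqref{def_weights}, Lemma~\ref{lem_weights} applies verbatim, so no new estimate on $\delta_{\bsj,\bsr,\bsl}$ is required; the work is entirely bookkeeping.

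First I would split the bound of Theorem~\ref{thm_boundinterror} into its $\bsr = \bszero$ contribution and its $\bsr \in \mathbb{N}_0^s \setminus \{\bszero\}$ contribution. For the $\bsr = \bszero$ part, namely $\sum_{(\bsj,\bsl) \in \boldsymbol{D}} \gamma_\emptyset(J) \delta_{\bsj,\bszero,\bsell}$, I would invoke Lemma~\ref{lem_weights}(i) to discard every term with $|N_{\bsj,\bsl}| > 0$ (for which $\delta_{\bsj,\bszero,\bsl} = 0$) and Lemma~\ref{lem_weights}(ii) to replace $\delta_{\bsj,\bszero,\bsl}$ by $b^{(j_1 + \cdots + j_s)/2}$ whenever $|N_{\bsj,\bsl}| = 0$. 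This turns the first sum into exactly $\sum_{(\bsj,\bsl) \in \boldsymbol{D},\, |N_{\bsj,\bsl}| = 0} b^{(j_1 + \cdots + j_s)/2} \gamma_\emptyset(J)$, the first term claimed in the lemma.

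For the $\bsr \neq \bszero$ part I would use Lemma~\ref{lem_weights}(iii): whenever $|N_{\bsj,\bsl}| = 0$ one has $\delta_{\bsj,\bsr,\bsl} = 0$, so only the locations $(\bsj,\bsl) \in \boldsymbol{D}$ with $|N_{\bsj,\bsl}| > 0$ survive. It then remains to rewrite $\sum_{\bsr \in \mathbb{N}_0^s \setminus \{\bszero\}}$ as $\sum_{\emptyset \neq u \subseteq S} \sum_{\bsr_u \in \mathbb{N}^{|u|}}$: every nonzero $\bsr$ corresponds bijectively to the pair consisting of its support $u = u_{\bsr}$ and its strictly positive block $\bsr_u$, written $\bsr = (\bsr_u, \bszero)$. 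On the set $\{\bsr : u_{\bsr} = u\}$ the quantities $u_{\bsr}$, $\alpha_{u_{\bsr}}(J)$, $|u_{\bsr}|$, and $\gamma_{u_{\bsr}}(J)$ are constant and equal to their $u$-indexed counterparts, while $|\bsr|_1 = |\bsr_u|_1$ together with $\sum_{i \in u_{\bsr}} j_i = \sum_{i \in u} j_i$ and $\sum_{i \notin u_{\bsr}} j_i = \sum_{i \notin u} j_i$. Pulling the factors that no longer depend on $\bsr_u$ outside the inner sum yields precisely the second term in the statement.

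The only point requiring care — the main, and fairly minor, obstacle — is this reindexing: one must check that every $\bsr \in \mathbb{N}_0^s \setminus \{\bszero\}$ is counted exactly once under the decomposition $\bsr \leftrightarrow (u,\bsr_u)$, and that the exponents depending on $u_{\bsr}$ genuinely become constant on each block, so that they may legitimately be factored out of the sum over $\bsr_u \in \mathbb{N}^{|u|}$. Once this is verified, collecting the two pieces gives the asserted bound and completes the proof.
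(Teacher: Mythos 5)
Your proposal is correct and follows exactly the route the paper takes: the paper derives Lemma~\ref{lem_bound2} by inserting Lemma~\ref{lem_weights} into the bound of Theorem~\ref{thm_boundinterror}, with the same split into the $\bsr=\bszero$ and $\bsr\neq\bszero$ contributions and the same reindexing of $\bsr\in\mathbb{N}_0^s\setminus\{\bszero\}$ by its support $u=u_{\bsr}$. The reindexing step you flag is indeed the only point needing care, and your treatment of it is sound.
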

The error bound in the lemma above consists of two parts. The
first sum arises from the fact that the points are only spread over
a finite area and hence one obtains a truncation error, whereas the
second term arises from the approximation error of the integral over
the region where there are points.

In the following we define translated and dilated digital nets in
arbitrary elementary intervals in $\mathbb{R}^s$.

\begin{definition}
Let $\bsj,\bsl \in \mathbb{Z}^s$ be given and let $P =
\{\bsx_0,\ldots, \bsx_{b^m-1}\}$ be a point set in $I = [b^{\bsj}
\star \bsl, b^{\bsj} \star (\bsl + \boldsymbol{1}))$. Let
$T:[b^{\bsj} \star \bsl, b^{\bsj} \star (\bsl + \boldsymbol{1}))
\rightarrow [0,1]^s$ be given by
\begin{equation*}
T(\bsx) = b^{-\bsj} \star \bsx - \bsl.
\end{equation*}
Then we call $P$ a digitally shifted digital $(t,m,s)$-net over
$\mathbb{Z}_b$ in $I$, if the translated and dilated point set
$\{T(\bsx_0), \ldots, T(\bsx_{b^m-1})\} \subset [0,1)^s$ is a
digitally shifted digital net over $\mathbb{Z}_b$.
\end{definition}

In the following we analyze the integration error when for each $(\bsj,\bsl) \in \boldsymbol{D}$ with $N_{\bsj,\bsl} > 0$ the points in $[b^{\bsj} \star \bsl, b^{\bsj} \star (\bsl + \boldsymbol{1}))$ form a digitally shifted digital net.

\begin{lemma}
Let $(\bsj,\bsl) \in  \boldsymbol{D}$ be given. Let $P =
\{\bsx_0,\ldots, \bsx_{b^m-1}\} \subset [b^{\bsj} \star \bsl,
b^{\bsj} \star (\bsl + \boldsymbol{1}))$ be a digitally shifted
digital $(t,m,s)$-net over $\mathbb{Z}_b$ in $[b^{\bsj} \star \bsl,
b^{\bsj} \star (\bsl + \boldsymbol{1}))$. Let $\bsr \in
\mathbb{N}_0^s \setminus \{\bszero\}$. Then
\begin{equation*}
\delta_{\bsj,\bsr,\bsl} \le \left\{\begin{array}{ll} 0 & \mbox{if }
|\bsr|_1 \le m - t, \\ (1-1/b)^{|u_{\bsr}|/2} b^{(j_1+\cdots +j_s)/2} b^{(|\bsr|_1 - m + t)/2}
& \mbox{if } |\bsr|_1 > m-t.
\end{array} \right.
\end{equation*}
\end{lemma}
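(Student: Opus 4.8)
The quantity $\delta_{\bsj,\bsr,\bsl}$ for $\bsr\neq\bszero$ was already expressed in \eqref{eq_deltar} as a double sum over net points $n,n'\in N_{\bsj,\bsl}$ of Walsh functions evaluated at digital differences, weighted by $b^{j_1+\cdots+j_s}/|N_{\bsj,\bsl}|^2 = b^{j_1+\cdots+j_s}/b^{2m}$. The first step is to apply the transformation $T(\bsx)=b^{-\bsj}\star\bsx-\bsl$ to move everything into $[0,1)^s$: by the definition of a digitally shifted digital net in $I$, the points $T(\bsx_n)$ form an ordinary digitally shifted digital $(t,m,s)$-net. Crucially, a digital shift leaves the Walsh-character sum invariant, because $\walb_{\bsk}$ of a digital difference is unaffected by adding a common shift to both arguments. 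So $\delta_{\bsj,\bsr,\bsl}^2 = b^{j_1+\cdots+j_s}\, b^{-2m}\sum_{\bsk}\sum_{n,n'}\walb_{\bsk}(\bsy_n\ominus\bsy_{n'})$, where $\{\bsy_n\}$ is an unshifted digital net and $\bsk$ ranges over the box $\lfloor b^{r_i-1}\rfloor\le k_i<b^{r_i}$.

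**Reduction to a net-property sum.** The plan is to recognize the inner double sum as a standard quantity in digital net theory. First I would invoke the character-sum identity for digital nets: $\frac1{b^{2m}}\sum_{n,n'}\walb_{\bsk}(\bsy_n\ominus\bsy_{n'}) = \frac1{b^m}\sum_{n}\walb_{\bsk}(\bsy_n)\cdot\overline{\frac1{b^m}\sum_{n'}\walb_{\bsk}(\bsy_{n'})}=|\frac1{b^m}\sum_n\walb_{\bsk}(\bsy_n)|^2$, which is either $1$ (if $\bsk$ lies in the dual net) or $0$ (otherwise). Thus $\delta_{\bsj,\bsr,\bsl}^2 = b^{j_1+\cdots+j_s}\cdot\#\{\bsk\in\text{box}:\bsk\text{ in the dual net}\}$. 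The two regimes then follow from the $(t,m,s)$-net property: the defining feature is that no nonzero $\bsk$ with $|\bsr_{\bsk}|$-type digit-length below $m-t$ can lie in the dual net, so when $|\bsr|_1\le m-t$ the count is zero, giving $\delta_{\bsj,\bsr,\bsl}=0$. This disposes of the first case.

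**The nonzero regime.** For $|\bsr|_1>m-t$ I would bound the number of dual-net vectors $\bsk$ inside the frequency box. The box has $\prod_{i\in u_{\bsr}}(b^{r_i}-b^{r_i-1}) = (1-1/b)^{|u_{\bsr}|}\,b^{|\bsr|_1}$ nonzero-component choices, but the dual-net condition is a set of $m$ linear constraints over $\mathbb{Z}_b$ on the digit vectors $\vec{k}_1,\ldots,\vec{k}_s$; of these, the high-order digits up to total length $m-t$ are forced, so the fraction of box-elements surviving is at most $b^{-(m-t)}$. Multiplying, the count is at most $(1-1/b)^{|u_{\bsr}|}\,b^{|\bsr|_1}\,b^{-(m-t)} = (1-1/b)^{|u_{\bsr}|}\,b^{|\bsr|_1-m+t}$. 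Taking square roots and restoring the $b^{(j_1+\cdots+j_s)/2}$ prefactor yields exactly the claimed bound $(1-1/b)^{|u_{\bsr}|/2}\,b^{(j_1+\cdots+j_s)/2}\,b^{(|\bsr|_1-m+t)/2}$.

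**Main obstacle.** The delicate point is the counting of dual-net vectors constrained to the dyadic frequency box in the second regime: I must correctly align the $(t,m,s)$-net linear-algebra condition (which controls how many low-order digit combinations the generating matrices can realize) with the box constraint $\lfloor b^{r_i-1}\rfloor\le k_i<b^{r_i}$, taking care that only coordinates $i\in u_{\bsr}$ contribute a factor $(1-1/b)$ while coordinates with $r_i=0$ force $k_i=0$. Getting the exact constant $(1-1/b)^{|u_{\bsr}|/2}$ rather than a cruder bound requires isolating the leading digit in each active coordinate, which ranges over $b-1$ rather than $b$ values. The rest is the routine character-sum machinery for digital nets, which I would cite from the standard theory (e.g.\ the treatment in \cite{DP09}) rather than redo in full.
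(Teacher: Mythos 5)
Your proposal follows essentially the same route as the paper: pull the net back to $[0,1)^s$ via $T$, use the character-sum identity to reduce $\delta_{\bsj,\bsr,\bsl}^2$ to $b^{j_1+\cdots+j_s}$ times the number of dual-net vectors in the frequency box, get the zero case from the $(t,m,s)$-net property, and defer the counting bound $(1-1/b)^{|u_{\bsr}|}b^{|\bsr|_1-m+t}$ in the remaining case to the standard digital-net literature (the paper cites the proof of Lemma~7 of \cite{CDP} at exactly the point where you cite \cite{DP09}). The only caution is that your intermediate phrase ``the fraction of box-elements surviving is at most $b^{-(m-t)}$'' is a heuristic rather than an argument --- the box is not a coset, so the honest proof must fix the $(b-1)^{|u_{\bsr}|}$ nonzero leading digits and count completions via the linear-independence property of the generating matrices, as you correctly identify in your final paragraph.
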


\begin{proof}
Let $\bsy_n = T(\bsx_n)$ for $0 \le n < b^m$, then $P_T =
\{\bsy_0,\ldots, \bsy_{b^m-1}\}$ is a digitally shifted digital
$(t,m,s)$-net over $\mathbb{Z}_b$. We have
\begin{eqnarray*}
\delta_{\bsj,\bsr,\bsl}^2 & = & b^{j_1+\cdots + j_s} \sum_{k_1= \lfloor b^{r_1-1}\rfloor
}^{b^{r_1}-1} \cdots \sum_{k_s = \lfloor b^{r_1-1}
\rfloor}^{b^{r_s}-1}  \frac{1}{b^{2m}} \sum_{n,n' = 0}^{b^m-1} {\rm
wal}_{\bsk}((b^{-\bsj} \star \bsx_n - \bsl) \ominus (b^{-\bsj} \star
\bsx_{n'} - \bsl)) \\ & = & b^{j_1+\cdots + j_s} \sum_{k_1= \lfloor b^{r_1-1}\rfloor
}^{b^{r_1}-1} \cdots \sum_{k_s = \lfloor b^{r_1-1}
\rfloor}^{b^{r_s}-1}  \frac{1}{b^{2m}} \sum_{n,n' = 0}^{b^m-1} {\rm
wal}_{\bsk}(\bsy_n \ominus \bsy_{n'}).
\end{eqnarray*}
Let $C_1,\ldots, C_s \in \mathbb{Z}_b^{m \times m}$ be the
generating matrices of $P_T$ and let $$\mathcal{D} = \{\bsk \in
\mathbb{N}_0^s: C_1 \vec{k}_1 + \cdots + C_s \vec{k}_s \equiv
\vec{0} \in \mathbb{Z}_b^m\}$$ denote the dual net. Then
\begin{equation*}
\frac{1}{b^{2m}} \sum_{n,n'=0}^{b^m-1} {\rm wal}_{\bsk}(\bsy_n
\ominus \bsy_{n'}) = \left\{\begin{array}{ll} 1 & \mbox{if } \bsk
\in \mathcal{D}, \\ 0 & \mbox{otherwise}.
\end{array} \right.
\end{equation*}
Hence it follows that $\delta_{\bsj,\bsr,\bsl} = 0$ if $|\bsr|_1 \le
m-t$.

If $|\bsr|_1 > m-t$, then, as in the proof of \cite[Lemma~7]{CDP},
it follows that
\begin{equation*}
\delta_{\bsj,\bsr,\bsl}^2 \le (1-1/b)^{|u_{\bsr}|} b^{j_1+\cdots + j_s} b^{|\bsr|_1 - m +
t},
\end{equation*}
which implies the result.
\end{proof}

\begin{lemma}\label{lem_dignetsdelta}
Let $(\bsj,\bsl) \in  \boldsymbol{D}$ be given. Let $P =
\{\bsx_0,\ldots, \bsx_{b^m-1}\} \subset [b^{\bsj} \star \bsl,
b^{\bsj} \star (\bsl + \boldsymbol{1}))$ be a digitally shifted
digital $(t,m,s)$-net over $\mathbb{Z}_b$ in $[b^{\bsj} \star \bsl,
b^{\bsj} \star (\bsl + \boldsymbol{1}))$. Let $1/2 < \alpha \le 1$
and let $\emptyset \neq u \subseteq S$. Then
\begin{equation*}
\sum_{\bsr_u \in \mathbb{N}^{|u|}} b^{-\alpha |\bsr_u|_1}
\delta_{\bsj,(\bsr_u,\bszero),\bsl} \le (b-1)^{-|u|/2} b^{|u|/2}
b^{1/2-\alpha} b^{(j_1+\cdots + j_s)/2} b^{-\alpha (m-t)} \binom{m-t + |u|}{|u|-1}.
\end{equation*}
\end{lemma}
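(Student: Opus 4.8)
The plan is to feed the two-case bound on $\delta_{\bsj,\bsr,\bsl}$ from the preceding lemma into the sum and reduce everything to a single scalar series in the total order $|\bsr_u|_1$. First I would observe that for $\bsr = (\bsr_u,\bszero)$ with $\bsr_u \in \mathbb{N}^{|u|}$ every coordinate indexed by $u$ is nonzero, so $u_{\bsr} = u$ and $|u_{\bsr}| = |u|$. Consequently the preceding lemma gives $\delta_{\bsj,(\bsr_u,\bszero),\bsl} = 0$ whenever $|\bsr_u|_1 \le m-t$, and $\delta_{\bsj,(\bsr_u,\bszero),\bsl} \le (1-1/b)^{|u|/2} b^{(j_1+\cdots+j_s)/2} b^{(|\bsr_u|_1 - m + t)/2}$ otherwise. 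Substituting this into $\sum_{\bsr_u} b^{-\alpha |\bsr_u|_1}\delta_{\bsj,(\bsr_u,\bszero),\bsl}$ annihilates all terms with $|\bsr_u|_1 \le m-t$, and I can pull the $\bsr_u$-independent factors $(1-1/b)^{|u|/2} b^{(j_1+\cdots+j_s)/2} b^{-(m-t)/2}$ out in front.

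What remains inside is $\sum_{\bsr_u \in \mathbb{N}^{|u|},\, |\bsr_u|_1 > m-t} b^{-(\alpha-1/2)|\bsr_u|_1}$, where I have merged $b^{-\alpha|\bsr_u|_1}$ with the $b^{|\bsr_u|_1/2}$ coming from the $\delta$-bound. The next step is to reorganize this multi-index sum by the value of the total order: the number of $\bsr_u \in \mathbb{N}^{|u|}$ with $|\bsr_u|_1 = v$ is the number of compositions of $v$ into $|u|$ positive parts, namely $\binom{v-1}{|u|-1}$. Thus the inner sum becomes the scalar tail series $\sum_{v > m-t} \binom{v-1}{|u|-1} b^{-(\alpha-1/2)v}$, and since $\inf_J \alpha_u(J) > 1/2$ the geometric ratio $b^{-(\alpha-1/2)} < 1$ guarantees absolute convergence.

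The crux is estimating this tail series. I would shift the index by writing $v = (m-t)+1+\ell$ with $\ell \ge 0$, factor out the leading decay $b^{-(\alpha-1/2)((m-t)+1)}$, and bound the residual series $\sum_{\ell \ge 0}\binom{m-t+\ell}{|u|-1} b^{-(\alpha-1/2)\ell}$. Here the binomial coefficient grows only polynomially in $\ell$ whereas the weight decays geometrically, so a resummation controls it: via the Vandermonde identity $\binom{m-t+\ell}{|u|-1} = \sum_i \binom{m-t}{|u|-1-i}\binom{\ell}{i}$ together with $\sum_{\ell\ge 0}\binom{\ell}{i} x^\ell = x^i(1-x)^{-i-1}$, the series is bounded by a constant multiple of $\binom{m-t+|u|}{|u|-1}$, the constant being the coordinate-wise geometric factor governed by $\alpha > 1/2$. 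I expect this resummation to be the main obstacle, since one must play the polynomial growth of $\binom{m-t+\ell}{|u|-1}$ off against the geometric weight and extract precisely the clean combinatorial factor $\binom{m-t+|u|}{|u|-1}$ rather than a messier partial-binomial sum.

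Finally I would reassemble the exponents. The product of the pulled-out factor $b^{-(m-t)/2}$ with the leading decay $b^{-(\alpha-1/2)((m-t)+1)}$ collapses exactly to $b^{-\alpha(m-t)} b^{1/2-\alpha}$, since $-(m-t)/2 - (\alpha-1/2)((m-t)+1) = -\alpha(m-t) + (1/2-\alpha)$. The surviving constant $(1-1/b)^{|u|/2}$ together with the geometric factor produced by the resummation then combine into the coordinate-wise prefactor stated in the lemma, which yields the claimed bound.
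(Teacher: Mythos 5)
Your argument is structurally identical to the paper's: insert the two-case bound on $\delta_{\bsj,\bsr,\bsl}$ from the preceding lemma (noting $u_{\bsr}=u$), pull out the $\bsr_u$-independent factors, reorganize by the value of $|\bsr_u|_1$ using the composition count, and bound the resulting tail series by $b^{(1/2-\alpha)(m-t+1)}\binom{m-t+|u|}{|u|-1}$ times a geometric constant. The only divergence is at the step you correctly identify as the crux: the paper disposes of the tail sum by citing \cite[Lemma~6]{CDP}, whereas you carry out the resummation by hand via Vandermonde. One caveat there: your explicit computation produces the geometric factor $(1-b^{1/2-\alpha})^{-|u|}$, which depends on $\alpha$ and blows up as $\alpha\to 1/2^{+}$, whereas the stated prefactor requires the tail-sum constant to be $(1-1/b)^{-|u|}$; these agree only when $b^{1/2-\alpha}\le b^{-1}$, i.e.\ $\alpha\ge 3/2$, so your final assertion that the constants ``combine into the coordinate-wise prefactor stated in the lemma'' does not follow from the resummation as you describe it. You should either track the $\alpha$-dependent constant honestly or verify that the cited \cite[Lemma~6]{CDP} really delivers the $\alpha$-free constant in this regime.
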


\begin{proof}
We have
\begin{eqnarray*}
\sum_{\bsr_u \in \mathbb{N}^{|u|}} b^{-\alpha |\bsr_u|_1}
\delta_{\bsj,(\bsr_u,\bszero),\bsl} & \le & (1-1/b)^{|u|/2} b^{(j_1+\cdots + j_s)/2}
b^{(-m+t)/2} \sum_{\bsr_u \in \mathbb{N}^{|u|}, |\bsr_u|_1 > m-t}
b^{(1/2-\alpha) |\bsr_u|_1} \\ & = & (1-1/b)^{|u|/2}b^{(j_1+\cdots + j_s)/2}  b^{(-m+t)/2}
\sum_{l = m-t+1}^\infty b^{(1/2-\alpha)l} \sum_{\bsr_u \in
\mathbb{N}^{|u|}, |\bsr_u|_1 = l} 1 \\ & \le & (1-1/b)^{|u|/2} b^{(j_1+\cdots + j_s)/2}
b^{(-m+t)/2} \sum_{l = m-t+1}^\infty b^{(1/2-\alpha)l} \binom{l+|u|-1}{|u|-1} \\ & \le & (b-1)^{-|u|/2} b^{|u|/2} b^{1/2-\alpha} b^{(j_1+\cdots + j_s)/2}
b^{-\alpha (m-t)} \binom{m-t + |u|}{|u|-1},
\end{eqnarray*}
where the last inequality follows from \cite[Lemma~6]{CDP}.
\end{proof}

The following theorem follows from Lemmas~\ref{lem_bound2} and
\ref{lem_dignetsdelta}.

\begin{theorem}\label{thm_errorbound}
Let $\bsalpha$ be local smoothness parameters and $\bsgamma$ be local weight parameters. Let a set of quadrature points $P = \{\bsx_0,\ldots, \bsx_{N-1}\}$ be given such that for each $(\bsj, \bsl) \in
\boldsymbol{D}$ with $|N_{\bsj,\bsl}| > 0$ the point sets $\{\bsx_n:
n \in N_{\bsj,\bsl}\}$ are digitally shifted digital
$(t_{\bsj,\bsl},m_{\bsj,\bsl}, s)$-nets over $\mathbb{Z}_b$. Let
$\Lambda = \{\lambda_0,\ldots, \lambda_{N-1}\}$ be given by
\eqref{def_weights}. Then we have
\begin{eqnarray*}
\lefteqn{ e(H_{\alpha,\bsgamma}, Q_{P,\Lambda}) \le   \sum_{(\bsj,\bsl) \in
\boldsymbol{D}, |N_{\bsj,\bsl}| = 0} b^{(j_1+\cdots + j_s)/2}
\gamma_{\emptyset}(J) } \\ &&  + \sum_{\emptyset \neq u
\subseteq S} \sum_{(\bsj,\bsl) \in \boldsymbol{D}, |N_{\bsj,\bsl}|
> 0}  \gamma_{u}(J) C_{\alpha_{u}(J),|u|, J}
b^{-\alpha_{u}(J) (m_{\bsj,\bsl} - t_{\bsj,\bsl})}
\binom{m_{\bsj,\bsl} - t_{\bsj,\bsl} + |u|}{|u|-1},
\end{eqnarray*}
where $J= [b^{\bsj}\star \bsl, b^{\bsj} \star (\bsl +
\boldsymbol{1}))$ and
\begin{equation*}
C_{\alpha_{u}(J),|u|, J} =  (b-1)^{(\alpha_{u}(J) -1/2) |u|} b^{|u|/2}
b^{1/2-\alpha_{|u|}(J)} b^{(\alpha_{u}(J) + 1/2) \sum_{i \in u} j_i}.
\end{equation*}
\end{theorem}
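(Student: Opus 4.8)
The plan is to derive Theorem~\ref{thm_errorbound} by substituting the per-interval net estimate of Lemma~\ref{lem_dignetsdelta} into the two-part error bound of Lemma~\ref{lem_bound2}, and then absorbing all of the resulting powers of $b$ and $(b-1)$ into the single constant $C_{\alpha_u(J),|u|,J}$. Since the hypotheses of the theorem are exactly those under which both lemmas apply (the weights $\Lambda$ are given by \eqref{def_weights}, and on each interval with $|N_{\bsj,\bsl}|>0$ the points form a digitally shifted digital net), no new analytic input is needed; the content of the proof is purely the bookkeeping of exponents.

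First I would write down the bound of Lemma~\ref{lem_bound2}. Its first sum, ranging over $(\bsj,\bsl)\in\boldsymbol{D}$ with $|N_{\bsj,\bsl}|=0$, already coincides verbatim with the truncation term $\sum b^{(j_1+\cdots+j_s)/2}\gamma_\emptyset(J)$ appearing in the theorem, so it can be carried through unchanged. For the second sum the only factor still depending on the point set is the inner series $\sum_{\bsr_u\in\mathbb{N}^{|u|}} b^{-\alpha_u(J)|\bsr|_1}\delta_{\bsj,(\bsr_u,\bszero),\bsl}$. Then, for each $\emptyset\neq u\subseteq S$ and each $(\bsj,\bsl)\in\boldsymbol{D}$ with $|N_{\bsj,\bsl}|>0$, I would invoke the hypothesis that $\{\bsx_n:n\in N_{\bsj,\bsl}\}$ is a digitally shifted digital $(t_{\bsj,\bsl},m_{\bsj,\bsl},s)$-net and apply Lemma~\ref{lem_dignetsdelta} with $\alpha=\alpha_u(J)$, $m=m_{\bsj,\bsl}$, $t=t_{\bsj,\bsl}$. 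This bounds the inner series by $(b-1)^{-|u|/2}b^{|u|/2}b^{1/2-\alpha_u(J)}b^{(j_1+\cdots+j_s)/2}b^{-\alpha_u(J)(m_{\bsj,\bsl}-t_{\bsj,\bsl})}\binom{m_{\bsj,\bsl}-t_{\bsj,\bsl}+|u|}{|u|-1}$, which I would multiply against the prefactor $(b-1)^{\alpha_u(J)|u|}b^{\alpha_u(J)\sum_{i\in u}j_i-\sum_{i\notin u}j_i/2}$ coming from Lemma~\ref{lem_bound2}.

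The only step requiring any care is the collection of these exponents. The factors of $(b-1)$ combine to $(b-1)^{(\alpha_u(J)-1/2)|u|}$. For the powers of $b$ the crucial observation is $b^{(j_1+\cdots+j_s)/2}=b^{\sum_{i\in u}j_i/2}\,b^{\sum_{i\notin u}j_i/2}$, so that the factor $b^{-\sum_{i\notin u}j_i/2}$ exactly cancels the contribution of the coordinates outside $u$; the coordinates in $u$ then combine as $b^{\alpha_u(J)\sum_{i\in u}j_i}b^{\sum_{i\in u}j_i/2}=b^{(\alpha_u(J)+1/2)\sum_{i\in u}j_i}$, while the location-independent factors give $b^{|u|/2}b^{1/2-\alpha_u(J)}$. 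Assembling these recovers precisely $C_{\alpha_u(J),|u|,J}$ together with the remaining factor $b^{-\alpha_u(J)(m_{\bsj,\bsl}-t_{\bsj,\bsl})}\binom{m_{\bsj,\bsl}-t_{\bsj,\bsl}+|u|}{|u|-1}$, which completes the bound. There is no genuine obstacle here; the main things to watch are that the exponent $\alpha$ fed into Lemma~\ref{lem_dignetsdelta} is the local value $\alpha_u(J)$ attached to the interval $J$, and that the cancellation of the $i\notin u$ coordinates is carried out correctly.
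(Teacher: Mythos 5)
Your proposal is correct and is exactly the argument the paper intends: the theorem is stated there as following directly from Lemmas~\ref{lem_bound2} and \ref{lem_dignetsdelta}, and your bookkeeping of the $(b-1)$ and $b$ exponents (including the cancellation of the $i \notin u$ coordinates and the recombination into $b^{(\alpha_u(J)+1/2)\sum_{i\in u} j_i}$) correctly reconstructs the constant $C_{\alpha_u(J),|u|,J}$. No further comment is needed.
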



The last theorem lends itself to the following strategy. Use randomized digital nets in regions $J=[b^{\bsj}\star \bsl, b^{\bsj} \star (\bsl +
\boldsymbol{1}))$ where $\gamma_{u}(J)$ is `large'. In regions $J$ where $\gamma_{u}(J)$ is
`small' use less (or no) quadrature points (or redefine the regions so that the regions themselves become larger). Use variance
estimators of the variances of the local integration errors and
increase the number of points where this variance is largest so that
the local integration errors in each region are of approximately
equal size. Hence the values of $\gamma_{u}(J)$ do not have
to be known, instead one adjusts the quadrature points adaptively
using local variance estimators. As the number of quadrature points
increases spread out to cover a larger and larger area.

A problem that can occur is that, if the number of dimensions is
large, there are too many subcubes to consider. For example dividing
a region into two parts in each coordinate for a problem where the
dimension $s = 100$ yields $2^{100} \approx 10^{30}$ subcubes. This
is infeasible. To avoid this problem one can instead only divide the
most important coordinates into smaller intervals, leaving the
majority of the coordinates without any subdivisions.

Another problem that can occur with this method is that one
overlooks an important area where no quadrature points are used since the weight functions are not known in practice. In this case the number of quadrature points can
increase without decreasing the error.

A further disadvantage of this method is that the one-dimensional projections are not optimal, since the points are chosen independently in each subcube.

If the adaptive subdivision strategy fails, one can use the approach
outlined in the following section (in this case the one-dimensional projections are optimal).

\section{Construction of digital nets}\label{sec_construction}

We now turn to the construction of quadrature points. Let the number
of quadrature points be $b^m$, where $b\ge 2$ is a prime number and
$m \ge 1$ is an integer. First, one constructs the one-dimensional
projections for each coordinate and labels the points from $0$ to
$b^m-1$, as illustrated in Figure~\ref{fig1}. Then one uses a
digital net in $[0,1)^s$ with points $\bsx_0,\ldots, \bsx_{b^m-1}$,
where $\bsx_n=(x_{n,1},\ldots, x_{n,s})$ and maps it to
$\mathbb{R}^s$ using the labels for each one-dimensional projection.
That is, the point $x_{n,i}$ is replaced by the point on
$\mathbb{R}$ with label $x_{n,i} b^m$ (note that for digital nets,
$x_{n,i}= kb^{-m}$ for some nonnegative integer $k$). See
Figure~\ref{fig1} for an illustration. We present the details of
this procedure in the following.

\begin{figure}[ht]
\begin{center}
\includegraphics[scale=0.5]{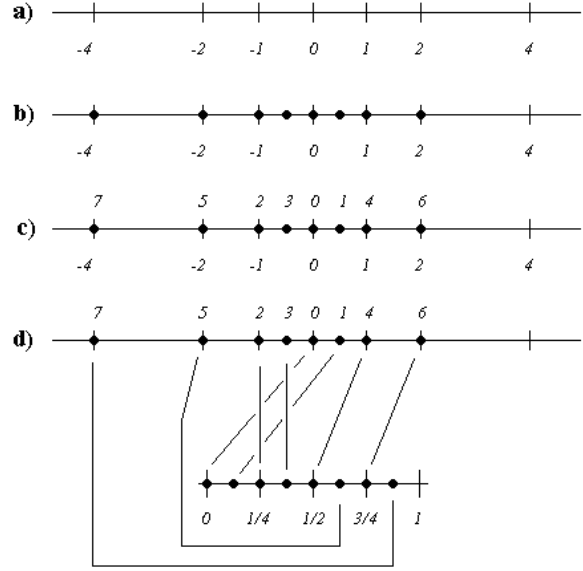}
\caption{\textit{An illustration of the procedure in dimension one
in base $b=2$. a) Decide on a partitioning of $\mathbb{R}$; here we
consider the intervals $[-4,-2), [-2,-1), [-1,0), [0,1), [1,2),
[2,4)$}; b) Decide on how many points should be in each interval and
then place them equally spaced and left-centered; For instance, the
interval $[-1,0)$ contains the points $-1$ and $-1/2$; c) Label the
points, starting with the intervals with the largest number of
points and continue with intervals with a smaller and smaller number
of points; d) Map the points $n/2^m$, $0 \le n < 2^m$ to the point
with label $n$.}
\end{center}\label{fig1}
\end{figure}

Let $b \ge 2$ be an integer. For $1 \le i \le s$ and $1 \le d_i \le \Delta_i$ let $J_{i,d_i} = [b^{j_{i,d_i}} l_{i,d_i}, b^{j_{i,d_i}} (l_{i,d_i} + 1))$ for $j_{i,d_i}, l_{i,d_i} \in \mathbb{Z}$ such that $J_{i,d_i} \cap J_{i,d'_i} =
\emptyset$ for $1 \le d_i < d'_i \le \Delta_i$. Let $0 \le m_{i, \Delta_i} \le m_{i, \Delta_i-1} \le
\cdots \le m_{i,1} \le m$ be integers such that
\begin{equation*}
b^{m_{i,1}} + b^{m_{i,2}} + \cdots + b^{m_{i,\Delta_i}} = b^m.
\end{equation*}
Let $z_{b^{m_{i,1}} + \cdots + b^{m_{i,d_i-1}}}, \ldots, z_{b^{m_{i,1}}+\cdots +
b^{m_{i,d_i-1}} + b^{m_{i,d_i}}-1}$ be $b^{m_{i,d_i}}$ equally spaced points in
$J_{i,d_i}$ such that
\begin{eqnarray*}
z_{b^{m_{i,1}} + \cdots + b^{m_{i,d_i-1}}} & = & b^{j_{i,d_i}} l_{i,d_i}, \\  z_{b^{m_{i,1}}
+ \cdots + b^{m_{i,d_i-1}} + 1} & = & b^{j_{i,d_i}} l_{i,d_i} + b^{j_{i,d_i}-m_{i,d_i}}, \\ &
\vdots & \\ z_{b^{m_{i,1}} + \cdots + b^{m_{i,d_i-1}} + k} &  = & b^{j_{i,d_i}} l_{i,d_i} + k b^{j_{i,d_i} - m_{i,d_i}}   \\ & \vdots &  \\
z_{b^{m_{i,1}} + \cdots + b^{m_{i,d_i}}-1} & = & b^{j_{i,d_i}} (l_{i,d_i}+1) -
b^{j_{i,d_i} - m_{i,d_i}},
\end{eqnarray*}
where for $d_i = 1$ we set $b^{m_{i,1}} + \cdots + b^{m_{i,d_i-1}} = 0$.

Let $C_1,\ldots, C_s \in \mathbb{Z}_b^{m \times m}$ be the
generating matrices of a digital $(t,m,s)$-net over $\mathbb{Z}_b$. See for instance \cite{DP09,Faure,Nied,NiedXing,Sobol} for explicit examples.
For $1 \le i \le s$ and $0 \le n < b^m$, where $n = n_0 + n_1 b +
\cdots + n_{m-1} b^{m-1}$, let
\begin{equation*}
C_i \left(\begin{array}{c} n_0 \\ n_1 \\ \vdots \\ n_{m-1} \end{array} \right) = \left(\begin{array}{c} \eta_{n,i,0} \\ \eta_{n,i,1} \\ \vdots \\
\eta_{n,i,m-1}
\end{array} \right)
\end{equation*}
and set
\begin{equation*}
\eta_{n,i} = \eta_{n,i,0} b^{m-1} + \eta_{n,i,1} b^{m-2} + \cdots +
\eta_{n,i,m-1}.
\end{equation*}
Define
\begin{equation*}
x_{n,i} = z_{\eta_{n,i}} \quad \mbox{and} \quad \bsx_n= (x_{n,1},
\ldots, x_{n,s}).
\end{equation*}

\begin{theorem}
Let $P = \{\bsx_0,\ldots, \bsx_{b^m-1}\}$ be constructed as above
based on a digital $(t,m,s)$-net over $\mathbb{Z}_b$. For $\bsd = (d_1,\ldots, d_s) \in \prod_{i=1}^s \{1,\ldots, \Delta_i\}$ let
\begin{equation*}
m_{\bsd} = m - \sum_{i=1}^s (m-m_{i,d_i}).
\end{equation*}
Then for each $J_{\bsd} = \prod_{i=1}^s J_{i,d_i}$, $1 \le d_i \le \Delta_i$ for $1 \le i
\le s$ with
\begin{equation*}
m_{\bsd} \ge t,
\end{equation*}
the points $P$ in $J_{\bsd}$ form a digitally shifted
digital $(t,m_{\bsd},s)$-net over $\mathbb{Z}_b$ in $J_{\bsd}$. In particular, for all $\bsd \in \prod_{i=1}^s \{1,\ldots, \Delta_i\}$ with $m_{\bsd} \ge t$ it follows that $J_{\bsd}$ contains at least one point of $P$.
\end{theorem}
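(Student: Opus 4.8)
The plan is to convert the geometric statement ``$\bsx_n \in J_{\bsd}$'' into a linear condition on the digit vector $C_i \vec{n}$, and then read off both the number of points and their digital-net structure from the defining $(t,m,s)$-net property of $C_1,\dots,C_s$. Write $\eta_{n,i}$ for the label of $x_{n,i}$, so that its base-$b$ digit vector (most significant first) is exactly $C_i \vec{n}$. The crucial observation is that the label blocks are digitally aligned: since $m_{i,1} \ge \cdots \ge m_{i,\Delta_i}$, each of $b^{m_{i,1}},\dots,b^{m_{i,d_i-1}}$ is divisible by $b^{m_{i,d_i}}$, hence the first label $b^{m_{i,1}}+\cdots+b^{m_{i,d_i-1}}$ assigned to $J_{i,d_i}$ is a multiple of $b^{m_{i,d_i}}$. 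Consequently the block of $b^{m_{i,d_i}}$ labels assigned to $J_{i,d_i}$ is precisely the set of labels whose top $m-m_{i,d_i}$ digits equal the fixed vector $v_i$ formed by the leading digits of that block start. Thus, writing $A_i$ for the top $m-m_{i,d_i}$ rows of $C_i$ and $B_i$ for its remaining $m_{i,d_i}$ rows, we have $\bsx_n \in J_{\bsd} \iff A_i \vec{n} = v_i$ for all $1 \le i \le s$.

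Next I would count the solutions. Stacking the conditions gives a single system $A\vec{n}=v$, where $A$ has $\sum_{i=1}^s (m-m_{i,d_i}) = m-m_{\bsd}$ rows. When $m_{\bsd}\ge t$ we have $m-m_{\bsd}\le m-t$, so these rows are exactly the top $m-m_{i,d_i}$ rows of the $C_i$ with total count at most $m-t$; the defining $(t,m,s)$-net property therefore guarantees they are linearly independent over $\mathbb{Z}_b$. Hence $A$ has full row rank $m-m_{\bsd}$, the map $\vec{n}\mapsto A\vec{n}$ is onto, and every fibre --- in particular the one over $v$ --- has exactly $b^{m}/b^{m-m_{\bsd}}=b^{m_{\bsd}}$ elements. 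This shows $J_{\bsd}$ contains exactly $b^{m_{\bsd}}\ge 1$ points of $P$, which already yields the final assertion that $J_{\bsd}$ is nonempty.

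It remains to identify the structure of $\{T(\bsx_n): \bsx_n\in J_{\bsd}\}$. A direct evaluation of $T(\bsx)=b^{-\bsj}\star\bsx-\bsl$ on the equally spaced points $z_\bullet$ shows that the $i$th coordinate of $T(\bsx_n)$ has digit vector $B_i\vec{n}$, i.e. it is the within-block offset at resolution $b^{-m_{i,d_i}}$. I would then parametrize the solution set as $\vec{n}=\vec{n}_0+M\vec{\xi}$, where $\vec{n}_0$ is a particular solution and the columns of $M$ form a basis of $\ker A$ (of dimension $m_{\bsd}$). Then $B_i\vec{n}=\tilde C_i\vec{\xi}+\vec\beta_i$ with $\tilde C_i = B_i M$ and $\vec\beta_i=B_i\vec{n}_0$, so the transformed point set is exactly the digital net generated by $\tilde C_1,\dots,\tilde C_s$, digitally shifted by $\vec\beta=(\vec\beta_1,\dots,\vec\beta_s)$.

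The hard part is verifying that $\tilde C_1,\dots,\tilde C_s$ generate a $(t,m_{\bsd},s)$-net, i.e. that for every $f_1,\dots,f_s\ge 0$ with $\sum_i f_i = m_{\bsd}-t$ the top $f_i$ rows of the $\tilde C_i$ are linearly independent. Here I would use duality over the field $\mathbb{Z}_b$: a combination $\vec{g}$ of the corresponding rows of the $B_i$ satisfies $\vec{g}M=0$ if and only if $\vec{g}\in(\ker A)^\perp=\mathrm{rowspace}(A)$, i.e. $\vec{g}$ is also a combination of the top $m-m_{i,d_i}$ rows of the $C_i$. Substituting this back produces a linear dependence among the top $(m-m_{i,d_i})+f_i$ rows of the $C_i$, whose total count is $(m-m_{\bsd})+(m_{\bsd}-t)=m-t$; since in each coordinate the two groups of rows used are disjoint, the $(t,m,s)$-net property forces all coefficients to vanish, and in particular the original combination was trivial. (One checks along the way that $f_i\le m_{\bsd}\le m_{i,d_i}$, so all invoked rows exist; the matrices $\tilde C_i$ have $m_{i,d_i}\ge m_{\bsd}$ rows, but only their leading rows enter the net property.) This establishes the claimed $(t,m_{\bsd},s)$-net property, completing the proof.
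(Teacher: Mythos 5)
Your proof is correct, and while it starts from the same computation as the paper (translating $\bsx_n \in J_{\bsd}$ into prescribing the leading $m-m_{i,d_i}$ digits of $C_i\vec{n}$, then invoking the $(t,m,s)$-net property to get full row rank and the count $b^{m_{\bsd}}$), it diverges in how it certifies the net quality of the induced point set. The paper never writes down generating matrices for the local net: it runs the same digit-prescription argument uniformly over every elementary subinterval $I=[b^{\bsp}\star\bsq, b^{\bsp}\star(\bsq+\bsone))\subseteq J_{\bsd}$ with $\sum_i(j_{i,d_i}-p_i)\le m_{\bsd}-t$, prescribing $a_i=(m-m_{i,d_i})+(j_{i,d_i}-p_i)$ leading digits per coordinate, and reads off the exact point count $b^{m_{\bsd}-\sum_i(j_{i,d_i}-p_i)}$ from the linear independence of at most $m-t$ rows; the $(t,m_{\bsd},s)$-net property then follows from the equidistribution definition, and ``digitally shifted digital'' is deduced tersely from the observation that the index set is an affine subspace of $\mathbb{Z}_b^m$. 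You instead fix $I=J_{\bsd}$ only, explicitly parametrize the affine solution set, exhibit the generating matrices $\tilde C_i=B_iM$ and the shift $B_i\vec{n}_0$, and verify the rank condition for the $\tilde C_i$ via the duality $(\ker A)^\perp=\mathrm{rowspace}(A)$, which converts a putative dependence among rows of the $\tilde C_i$ into a dependence among at most $m-t$ leading rows of the $C_i$. Your route costs an extra linear-algebra step but makes the ``digitally shifted digital net'' conclusion fully explicit (concrete generator matrices and shift vector), where the paper's final sentence leaves that identification implicit; the paper's route is shorter and verifies equidistribution in one stroke for all relevant subintervals. Both hinge on exactly the same instance of the net property, namely linear independence of prescribed leading rows totalling at most $m-t$.
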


\begin{proof}
Let $I= [b^{\bsp} \star \bsq, b^{\bsp} \star (\bsq +
\boldsymbol{1})) \subseteq J_{\bsd}$ such that ${\rm Vol}(I) \le
b^{j_{1,d_1} + \cdots + j_{s,d_s} + t - m_{\bsd}}$, that is,
\begin{equation*}
(j_{1,d_1} - p_1) + \cdots + (j_{s,d_s} - p_s) \le m_{\bsd} - t.
\end{equation*}
We need to show that $I$ contains exactly $b^{m_{\bsd} - (j_{1,d_1}
-p_1) - \cdots - (j_{s,d_s} -p_s)}$ points of $P$.

Let $\bsx_n \in I$, then $x_{n,i} = z_{\eta_{n,i}} \in [b^{p_i} q_i,
b^{p_i} (q_i+1))$ for $1 \le i \le s$. Let $k_i = \eta_{n,i} -
b^{m_{i,1}} - \cdots - b^{m_{i,d_i-1}}$, then $z_{\eta_{n,i}} =
b^{j_{i,d_i}} l_{i,d_i} + k_i b^{j_{i,d_i}-m_{i,d_i}}$ and
\begin{equation*}
b^{p_i} q_i \le b^{j_{i,d_i}} l_{i,d_i} + k_i b^{j_{i,d_i}- m_{i,d_i}} <
b^{p_i} (q_i+1)
\end{equation*}
for $1 \le i \le s$. This implies that
\begin{equation*}
b^{p_i-j_{i,d_i} + m_{i,d_i}} q_i - b^{m_{i,d_i}} l_{i,d_i} \le k_i < b^{p_i
- j_{i,d_i} + m_{i,d_i}} (q_i+1) - b^{m_{i,d_i}} l_{i,d_i}
\end{equation*}
and
\begin{eqnarray*}
\lefteqn{ b^{p_i-j_{i,d_i} + m_{i,d_i}} q_i - b^{m_{i,d_i}} l_{i,d_i} + b^{m_{i,1}} +
\cdots + b^{m_{i,d_i-1}} } \\ & \le & \eta_{n,i}  <  b^{p_i - j_{i,d_i} + m_{i,d_i}}
(q_i+1) - b^{m_{i,d_i}} l_{i,d_i} + b^{m_{i,1}} + \cdots + b^{m_{i,d_i-1}}
\end{eqnarray*}
for $1 \le i \le s$. Hence, for $1 \le i \le s$, the digits
\begin{equation*}
\eta_{n,i,0}, \ldots, \eta_{n,i,m-m_{i,d_i} + j_{i,d_i} - p_i-1}
\end{equation*}
are prescribed and the remaining digits can be chosen freely. Let
$a_i = m-m_{i,d_i} + j_{i,d_i} - p_i$,
\begin{equation*}
\zeta_{i,0} = \eta_{n,i,0}, \ldots, \zeta_{i,a_i-1} =
\eta_{n,i,a_i-1}.
\end{equation*}
Let $C_i = (c_{i,1},\ldots, c_{i,m})^\top$ for $1 \le i \le s$.

Then $\bsx_n \in I$ if and only if
\begin{equation}\label{eq_sys}
\left(\begin{array}{c} c_{1,1} \\ \vdots \\ c_{1,a_1} \\ \vdots \\
c_{s,1} \\ \vdots \\ c_{s,a_s}
\end{array} \right)\vec{n} = \left(\begin{array}{c} \zeta_{1,0} \\ \vdots \\ \zeta_{1,a_1-1} \\ \vdots \\ \zeta_{s,0} \\ \vdots \\ \zeta_{s,a_s-} \end{array} \right).
\end{equation}

The vectors $c_{1,1}, \ldots, c_{1,a_1}, \ldots, c_{s,1}, \ldots,
c_{s,a_s}$ are linearly independent since
\begin{equation*}
a_1 + \cdots + a_s = \sum_{i=1}^s (m-m_{i,d_i}) + \sum_{i=1}^s
(j_{i,d_i} - p_i) = m-m_{\bsd} + \sum_{i=1}^s (j_{i,d_i} - p_i) \le m-
t.
\end{equation*}
Hence \eqref{eq_sys} has $b^{m- a_1 - \cdots - a_s}$ solutions and
therefore there are $b^{m-a_1- \cdots - a_s}$ points of $P$ in
$J_{\bsd}$. Since
\begin{equation*}
m- \sum_{i=1}^s a_i = m - \sum_{i=1}^s (m- m_{i,d_i}) - \sum_{i=1}^s
(j_{i,d_i} - p_i) = m_{\bsd} - \sum_{i=1}^s (j_{i,d_i} - p_i),
\end{equation*}
there are exactly $b^{m_{\bsd} - (j_{1,d_1} - p_1) - \cdots - (j_{s,d_s}
- p_s)}$ points of $P$ in $J_{\bsd}$.

Let $N_{\bsd} = \{\vec{n}: 0 \le n < b^m \mbox{ and } \bsx_n \in
J_{\bsd}\}$. From above we see that $N_{\bsd}$ is an affine subspace
of $\mathbb{Z}_b^m$ and hence the set of points in $J_{\bsd}$ form a
digitally shifted digital $(t, m_{\bsd}, s)$-net over $\mathbb{Z}_b$
in $J_{\bsd}$. Hence the result follows.
\end{proof}

Note that if $m_{\bsd} < t$ for some $\bsd \in \prod_{i=1}^s \{1,\ldots, \Delta_i\}$, it is possible that $J_{\bsd}$ does not contain any point of $P$. This follows from the fact that \eqref{eq_sys} may not have any solution.

Further, it is clear from the construction that if the generating matrices $C_1,\ldots, C_s$ of the digital net are nonsingular, then each one-dimensional projection of the quadrature points $\{\bsx_0,\ldots, \bsx_{b^m-1}\}$ yields the points $z_0,\ldots, z_{b^m-1}$.

%

\section{Error bound}\label{sec_error_bound}

We use the construction of Section~\ref{sec_construction} and Theorem~\ref{thm_errorbound} to obtain an upper bound on the integration error.

\begin{theorem}\label{thm_boundnet}
Let $\boldsymbol{D} = \prod_{i=1}^s D_i$, where $D_i \subseteq
\mathbb{Z}^2$ is such that for each $1 \le i \le s$ the collection
of intervals \begin{equation}\label{intervals} \{[b^{j} l,
b^j(l+1)): (j,l) \in D_i\}\end{equation} forms a partition of
$\mathbb{R}$. For $1 \le i\le s$ choose $\Delta_i$ different
intervals from the set \eqref{intervals} represented by $(j_{i,d_i},
l_{i,d_i}) \in D_i$ for $1 \le i \le s$ and $1 \le d_i \le
\Delta_i$, where $(j_{i,d_i}, l_{i,d_i}) \neq (j_{i,d'_i},
l_{i,d'_i})$ for all $d_i \neq d'_i$. Let $E_i = \{(j_{i,d_i},
l_{i,d_i}): 1 \le d_i \le \Delta_i, 1 \le i \le s\}$,
$\boldsymbol{E} = \prod_{i=1}^s E_i$, and
\begin{equation*}
J_{i,d_i} = [b^{j_{i,d_i}} l_{i,d_i}, b^{j_{i,d_i}} (l_{i,d_i} + 1)).
\end{equation*}
Let a digital $(t,m,s)$-net over $\mathbb{Z}_b$ be given. Let $t \le m_{i,\Delta_i} \le \cdots \le m_{i,1} \le m$ be integers such that
\begin{equation*}
b^{m_{i,1}} + \cdots + b^{m_{i,\Delta_i}} = b^m.
\end{equation*}
Let $P = \{\bsx_0,\ldots, \bsx_{b^m-1}\}$ be constructed using the scheme of Section~\ref{sec_construction} based on the digital $(t,m,s)$-net over $\mathbb{Z}_b$. Let $\boldsymbol{F} = \{(\bsj,\bsl) \in \boldsymbol{E}: m_{\bsd} \ge t\}$, where $\bsj = (j_{1,d_1}, \ldots, j_{s,d_s})$, $\bsd = (d_1,\ldots, d_s)$, and $$m_{\bsd} = m - \sum_{i=1}^s (m-m_{i,d_i}).$$  Let
$\Lambda = \{\lambda_0,\ldots, \lambda_{N-1}\}$ be given by
\eqref{def_weights}.

Let $\bsalpha$ be local smoothness parameters and $\bsgamma$ be local weight parameters. Then we have
\begin{eqnarray*}
e(H_{\alpha,\bsgamma}, Q_{P,\Lambda}) & \le &  \sum_{(\bsj,\bsl) \in
\boldsymbol{D} \setminus \boldsymbol{F}} b^{(j_1+\cdots + j_s)/2}
\gamma_{\emptyset}(J) \\ &&  + \sum_{\emptyset \neq u
\subseteq S} \sum_{(\bsj,\bsl) \in \boldsymbol{F}}  \gamma_{u}(J) C_{\alpha,|u|, J}
b^{-\alpha_{u}(J) (m_{\bsd} - t)}
\binom{m_{\bsd} - t + |u|}{|u|-1},
\end{eqnarray*}
where $J= [b^{\bsj}\star \bsl, b^{\bsj}\star (\bsl+\bsone))$ and
\begin{equation*}
C_{\alpha_{u}(J),|u|, J} =  (b-1)^{(\alpha_{u}(J) -1/2) |u|} b^{|u|/2}
b^{1/2-\alpha_{|u|}(J)} b^{(\alpha_{u}(J) + 1/2) \sum_{i \in u} j_i}.
\end{equation*}
\end{theorem}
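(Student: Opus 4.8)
The plan is to treat Theorem~\ref{thm_boundnet} as the specialization of the general bound in Theorem~\ref{thm_errorbound} to the explicit point set $P$ constructed in Section~\ref{sec_construction}. Since $\Lambda$ is given by \eqref{def_weights}, Lemma~\ref{lem_weights} and Theorem~\ref{thm_errorbound} apply directly, so the only real task is to translate the construction into the two alternatives appearing there: for each cell $(\bsj,\bsl)\in\boldsymbol{D}$ one must decide whether $|N_{\bsj,\bsl}|=0$ or whether $\{\bsx_n:n\in N_{\bsj,\bsl}\}$ is a digitally shifted digital net, and with which parameters.

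First I would invoke the main theorem of Section~\ref{sec_construction}: for every $(\bsj,\bsl)\in\boldsymbol{F}$ (that is, with $m_{\bsd}\ge t$) the points of $P$ in $J=J_{\bsd}$ form a digitally shifted digital $(t,m_{\bsd},s)$-net in $J$, so $|N_{\bsj,\bsl}|=b^{m_{\bsd}}>0$. Feeding $t_{\bsj,\bsl}=t$ and $m_{\bsj,\bsl}=m_{\bsd}$ into the net sum of Theorem~\ref{thm_errorbound} reproduces verbatim the second sum of Theorem~\ref{thm_boundnet}, constant $C_{\alpha_u(J),|u|,J}$ included. Next, because the construction assigns the labels $0,\dots,b^m-1$ only to the chosen one-dimensional intervals $J_{i,d_i}$, no point of $P$ can fall into a cell $(\bsj,\bsl)\in\boldsymbol{D}\setminus\boldsymbol{E}$; there $|N_{\bsj,\bsl}|=0$, and by Lemma~\ref{lem_weights}(ii),(iii) such a cell contributes exactly the truncation term $b^{(j_1+\cdots+j_s)/2}\gamma_{\emptyset}(J)$ and nothing to the net sum.

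The step I expect to be the main obstacle is the bookkeeping for the chosen cells with $m_{\bsd}<t$, i.e. $\boldsymbol{E}\setminus\boldsymbol{F}$, since for these the construction no longer yields a $(t,m_{\bsd},s)$-net and Theorem~\ref{thm_errorbound} cannot be applied to them as they stand. For the two sums to line up one needs $\{(\bsj,\bsl):|N_{\bsj,\bsl}|>0\}=\boldsymbol{F}$, i.e. these cells must be empty. I would establish this by reopening the linear system \eqref{eq_sys} with $I$ equal to the full cell $J_{\bsd}$: the landing conditions prescribe $\sum_{i=1}^s(m-m_{i,d_i})=m-m_{\bsd}$ digits, and when $m_{\bsd}<t$ this exceeds the $m-t$ rows whose independence the $(t,m,s)$-net property guarantees, so one aims to show the prescribed-digit system is inconsistent and hence $J_{\bsd}$ receives no point; should such a cell nonetheless carry points, its contribution would instead have to be estimated crudely and absorbed into the truncation sum. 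Once emptiness of the $\boldsymbol{E}\setminus\boldsymbol{F}$ cells is secured, the index sets match, $\{(\bsj,\bsl):|N_{\bsj,\bsl}|=0\}=\boldsymbol{D}\setminus\boldsymbol{F}$, and since every summand is nonnegative the first sum of Theorem~\ref{thm_errorbound} is at most the truncation sum over $\boldsymbol{D}\setminus\boldsymbol{F}$; combining this with the net sum over $\boldsymbol{F}$ yields the asserted bound.
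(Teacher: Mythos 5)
Your overall route is the paper's: Theorem~\ref{thm_boundnet} is obtained by feeding the construction theorem of Section~\ref{sec_construction} (each cell $J_{\bsd}$ with $m_{\bsd}\ge t$ carries a digitally shifted digital $(t,m_{\bsd},s)$-net, hence $|N_{\bsj,\bsl}|=b^{m_{\bsd}}>0$) into Theorem~\ref{thm_errorbound} with $t_{\bsj,\bsl}=t$ and $m_{\bsj,\bsl}=m_{\bsd}$, while the cells of $\boldsymbol{D}$ receiving no point contribute only the truncation term via Lemma~\ref{lem_weights}. You have also correctly isolated the one genuine subtlety, namely the chosen cells in $\boldsymbol{E}\setminus\boldsymbol{F}$.

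However, your primary plan for those cells does not work. For $I=J_{\bsd}$ the system \eqref{eq_sys} prescribes $m-m_{\bsd}>m-t$ digits; the $(t,m,s)$-net property then no longer guarantees that the relevant rows are linearly independent, but dependence does not imply inconsistency --- the system can still be solvable for some right-hand sides, and the paper itself only states that such a cell ``may'' contain no point of $P$. So emptiness cannot be proved in general. Your fallback (``estimate crudely and absorb into the truncation sum'') also misses the actual difficulty: if such a cell is nonempty, Lemma~\ref{lem_weights}(i) gives $\delta_{\bsj,\bszero,\bsl}=0$, which is harmless, but the nonzero-frequency terms $\delta_{\bsj,\bsr,\bsl}$, $\bsr\neq\bszero$, are then positive and uncontrolled, since Lemma~\ref{lem_dignetsdelta} requires the points in the cell to form a digitally shifted digital net. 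The resolution the paper adopts (in the remark immediately following the theorem) is simply to discard any quadrature points that land in cells with $m_{\bsd}<t$; then every such cell has $|N_{\bsj,\bsl}|=0$, Lemma~\ref{lem_weights}(ii),(iii) makes it contribute exactly $b^{(j_1+\cdots+j_s)/2}\gamma_\emptyset(J)$ and nothing else, the index sets match as you require, and the stated bound follows. With that modification your argument is complete.
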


From the proof of Theorem~\ref{thm_errorbound} it follows that only subcubes $J= [b^{\bsj}\star \bsl, b^{\bsj}\star (\bsl+\bsone))$ are taken into account where $m_{\bsd} \ge t$. Hence Theorem~\ref{thm_boundnet} also applies if one does not use the quadrature points which fall into subcubes $J$ where $m_{\bsd} < t$. The quality parameter $t$ of the original digital $(t,m,s)$-net used in the construction described in Section~\ref{sec_construction} is the same in each subcube $J$ where $m_{\bsd} \ge t$. Hence, in this sense, if one uses a digital $(t,m,s)$-net and places the first $b^{m_{\bsd}}$ points of this net in each of the relevant subcubes, then the quality parameter of the digital nets in each subcube is also $t$, hence nothing is gained by placing the points in each subcube manually instead of using the procedure described in Section~\ref{sec_construction}.

\section{Some Examples}\label{sec_examples_all}

The aim of this section is to show some examples of how Theorem~\ref{thm_boundnet} can be applied in various situations. We do not give a comprehensive overview of how digital nets in $\mathbb{R}^s$ should be constructed for applications. This depends largely on the applications at hand and is left for future work.

\subsection{QMC rules defined on $[0,1]^s$}

The classical case of integration of functions defined on $[0,1]^s$
is included in Theorem~\ref{thm_boundnet} as a special case. It is related to the results on the worst-case setting in \cite{HHY}. Therein Haar functions on $[0,1)$ were used (which just corresponds to a particular tiling of the Walsh phase plane; the decay of the Haar coefficients there was not related to the modulus of continuity).

Assume we are given a partition $\mathcal{P}_{\mathbb{R}^s}$ which includes the unit $[0,1)^s$. Let
\begin{equation*}
\alpha_u(J) = \alpha \quad\mbox{for all } J\in\mathcal{P}_{\mathbb{R}^s} \mbox{ and } \emptyset \neq u \subseteq S,
\end{equation*}
where $1/2 < \alpha \le 1$ and
\begin{equation*}
\gamma_u(J) = \left\{\begin{array}{rr} 1 & \mbox{if } J=[0,1)^s, \\ 0 & \mbox{otherwise}, \end{array}\right.
\end{equation*}
for all $u \subseteq S$. The function space $H_{\bsalpha,\bsgamma}$  is then related to a Besov space of functions $f:[0,1]^s \rightarrow \mathbb{R}$.

Further choose $\Delta_i = 1$ for $1 \le i
\le s$. With these choices one obtains a function space on the
domain $[0,1]^s$ and also qMC rules defined on $[0,1]^s$. The point
set $P = \{\bsx_0,\ldots, \bsx_{b^m-1}\}$ obtained from the
construction in Section~\ref{sec_construction} is the same as the
original digital $(t,m,s)$-net.

In this case we choose $\boldsymbol{D} = \boldsymbol{F} = \{(\bszero, \bszero)\}$. Hence $\boldsymbol{D} \setminus \boldsymbol{F} = \emptyset$. We obtain the following result.

\begin{corollary}
Let $P$ be constructed using the scheme of Section~\ref{sec_construction} based on a digital $(t,m,s)$-net over $\mathbb{Z}_b$. With the choice of parameters described in this subsection we have
\begin{eqnarray*}
e(H_{\alpha,\bsgamma}, Q_{P,\Lambda}) & \le &  b^{-\alpha (m - t)} \sum_{\emptyset \neq u
\subseteq S} C_{\alpha,|u|}
\binom{m - t + |u|}{|u|-1},
\end{eqnarray*}
where
\begin{equation*}
C_{\alpha,|u|} =  (b-1)^{(\alpha -1/2) |u|} b^{1/2-\alpha} b^{|u|/2}.
\end{equation*}
\end{corollary}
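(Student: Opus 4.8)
The plan is to specialize Theorem~\ref{thm_boundnet} to the parameter choices described in this subsection and simplify the resulting bound. First I would substitute the chosen weight functions into the general error bound. Since $\gamma_\emptyset(J) = 0$ for every $J \neq [0,1)^s$, and since in fact $\gamma_u(J) = 0$ for all $u \subseteq S$ whenever $J \neq [0,1)^s$, the entire contribution from subcubes other than the unit cube vanishes. With $\boldsymbol{D} = \boldsymbol{F} = \{(\bszero,\bszero)\}$ we have $\boldsymbol{D} \setminus \boldsymbol{F} = \emptyset$, so the first sum in Theorem~\ref{thm_boundnet} (the truncation term) is empty and contributes nothing.

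Next I would evaluate the second sum, which now runs only over the single pair $(\bsj,\bsl) = (\bszero,\bszero)$ and over all $\emptyset \neq u \subseteq S$. For this pair, $J = [0,1)^s$, so $\gamma_u(J) = 1$ and $\alpha_u(J) = \alpha$ for every $u$. Because $\bsj = \bszero$, the factor $b^{(\alpha_u(J) + 1/2)\sum_{i \in u} j_i}$ in $C_{\alpha_u(J),|u|,J}$ reduces to $b^0 = 1$, so the location-dependent constant collapses to the purely combinatorial constant
\begin{equation*}
C_{\alpha,|u|} = (b-1)^{(\alpha - 1/2)|u|} b^{1/2 - \alpha} b^{|u|/2}
\end{equation*}
stated in the corollary. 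Moreover, since $\Delta_i = 1$ for each $i$ we have $m_{i,1} = m$ and hence $m_{\bsd} = m - \sum_{i=1}^s (m - m_{i,d_i}) = m$ for the unique admissible index $\bsd = (1,\ldots,1)$. Thus $m_{\bsd} - t = m - t$ throughout, and each summand carries the common factor $b^{-\alpha(m-t)}$ together with the binomial $\binom{m-t+|u|}{|u|-1}$.

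Finally I would collect the common factor $b^{-\alpha(m-t)}$ outside the sum over $u$, leaving exactly
\begin{equation*}
e(H_{\alpha,\bsgamma}, Q_{P,\Lambda}) \le b^{-\alpha(m-t)} \sum_{\emptyset \neq u \subseteq S} C_{\alpha,|u|} \binom{m-t+|u|}{|u|-1},
\end{equation*}
which is the claimed inequality. I expect no genuine obstacle here, as the corollary is a direct specialization; the only point requiring care is confirming that the chosen $\gamma_u$ force $\boldsymbol{D} \setminus \boldsymbol{F}$ to contribute nothing and that the product-net construction of Section~\ref{sec_construction} with $\Delta_i = 1$ indeed returns the original digital $(t,m,s)$-net so that $m_{\bsd} = m$. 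Both are immediate from the definitions, so the main task is purely verifying that the substitutions into $C_{\alpha_u(J),|u|,J}$ and $m_{\bsd}$ produce the stated $\alpha$-independent-of-location form.
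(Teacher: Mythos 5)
Your proposal is correct and follows exactly the route the paper intends: the corollary is a direct specialization of Theorem~\ref{thm_boundnet} with $\boldsymbol{D}=\boldsymbol{F}=\{(\bszero,\bszero)\}$ killing the truncation term, $\bsj=\bszero$ collapsing $C_{\alpha_u(J),|u|,J}$ to $C_{\alpha,|u|}$, and $\Delta_i=1$ forcing $m_{\bsd}=m$. The substitutions all check out, so nothing further is needed.
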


\subsection{Rational decay of the weight
parameters}\label{subsec_example2}

Now we consider numerical integration of functions defined on $\mathbb{R}^s$, where the weights decay slowly to $0$ as $|\bsx|_\infty \rightarrow \infty$.

Let the local smoothness parameters $\bsalpha$ be constant, that is
\begin{equation*}
\alpha_u(J) = \alpha \quad\mbox{for all } J \in \mathcal{P}_{\mathbb{R}^s} \mbox{ and for all } \emptyset \neq u \subseteq S,
\end{equation*}
where $1/2 < \alpha \le 1$.

For $J = \prod_{i=1}^s [a_i, b_i)$ let $\gamma_i(J) = (1 + \min(|a_i|, |b_i|)^{2\alpha + 1/2})^{-1}$,
\begin{equation*}
\gamma_u(J) = \prod_{i =1}^s \gamma_i(J)
\end{equation*}
for $\emptyset \neq u \subseteq S$, and let
\begin{equation*}
\gamma_{\emptyset}(J) = \prod_{i=1}^s (1 + \min(|a_i|, |b_i|)^{\alpha+1/2})^{-1}.
\end{equation*}

Let $b = 2$ and $m_l = m-1 - \lceil l/2 \rceil$ for $1 \le l \le 2(m-2)$ and $m_{2m-3} = m_{2m-2} = 1$. Then
\begin{equation*}
\sum_{l=1}^{2m-2} 2^{m_l} = 2[1 + 1 + 2 + 2^2 + \cdots + 2^{m-2}] = 2^m.
\end{equation*}

For $1 \le i \le s$ and $d_i \ge 1$ let
\begin{equation*}
(j_{i,d_i}, l_{i,d_i}) = \left\{\begin{array}{ll} (0,0) & \mbox{for } d_i = 1, \\ (0,-1) & \mbox{for } d_i = 2, \\ ((d_i-3)/2,1) & \mbox{for } d_i \ge 3, d_i \mbox{ odd}, \\ ((d_i-4)/2,-2) & \mbox{for } d_i \ge 4, d_i \mbox{ even}, \end{array} \right.
\end{equation*}
and
\begin{equation*}
J_{i,d_i} = [2^{j_{i,d_i}} l_{i,d_i}, 2^{j_{i,d_i}} (l_{i,d_i} + 1)).
\end{equation*}
The intervals $J_{i,1}, J_{i,2}, J_{i,3}, \ldots$ form a partition of $\mathbb{R}$. Let $$E_i = \{(j_{i,d_i}, l_{i,d_i}): 1 \le d_i \le 2m-2, 1 \le i \le s\}$$ and $\boldsymbol{E} = \prod_{i=1}^s E_i$ and let $$D_i = \{(j_{i,d_i}, l_{i,d_i}): d_i \ge 1, 1 \le i \le s\}$$ and $\boldsymbol{D} = \prod_{i=1}^s D_i$.

Let $P$ be the point set constructed according to Section~\ref{sec_construction} based on a digital $(t,m,s)$-net over $\mathbb{Z}_2$.
\begin{corollary}\label{cor_polydecay}
Let $P$ be constructed using the scheme of Section~\ref{sec_construction} based on a digital $(t,m,s)$-net over $\mathbb{Z}_2$ where $m > t + 3s$ and $s \ge 2$. With the choice of parameters as above we have
\begin{equation*}
e(H_{\bsalpha,\bsgamma}, Q_{P,\Lambda}) \le 2^{-\alpha (m-t)} \binom{m-t-2s}{s}^2 2^{s(3\alpha + 2)} \left(2^{ - \alpha} + 2 (1 + 2^{3/2})^s \right).
\end{equation*}
\end{corollary}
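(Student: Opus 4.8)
The plan is to specialise Theorem~\ref{thm_boundnet} to the stated data and then bound the two resulting sums separately; call them $S_1$ (the truncation sum over $\boldsymbol{D}\setminus\boldsymbol{F}$) and $S_2$ (the quadrature sum over $\emptyset\neq u\subseteq S$ and $(\bsj,\bsl)\in\boldsymbol{F}$). The feature that makes this tractable is that everything factors across the $s$ coordinates: $\gamma_\emptyset(J)$ and $\gamma_u(J)$ are products of the one-dimensional weights, $2^{(j_1+\cdots+j_s)/2}$ and $2^{(\alpha+1/2)\sum_{i\in u}j_i}$ split coordinatewise, and each axis carries the same partition. I would therefore first reduce both sums to one-dimensional quantities indexed by the \emph{level} $k$ of an axis interval: the two intervals at level $k\ge2$ sit at distance $2^{k-2}$ from the origin with exponent $j=k-2$, while level $1$ is $\{[0,1),[-1,0)\}$. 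The single inequality driving everything is that at level $k$ each relevant one-dimensional factor is at most $2^{-\alpha(k-2)}$; indeed $2^{j/2}(1+\min|x|^{\alpha+1/2})^{-1}\le2^{(k-2)/2}2^{-(k-2)(\alpha+1/2)}=2^{-\alpha(k-2)}$, $\gamma_i(J)2^{(\alpha+1/2)j}\le2^{-\alpha(k-2)}$, and $\gamma_i(J)\le2^{-(k-2)(2\alpha+1/2)}\le2^{-\alpha(k-2)}$. This is exactly the role of the exponents $\alpha+1/2$ and $2\alpha+1/2$: they are picked so the growth factors $2^{j/2}$ and $2^{(\alpha+1/2)j}$ are swallowed.

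Next I would reparametrise with $e_i=m-m_{i,d_i}\ge2$ (so level $k$ has $e_i=k+1$), $f_i=e_i-2\ge0$, and $L=m-t-2s$. Then $\boldsymbol{F}$ is precisely $\{\sum_i f_i\le L\}$, $m_{\bsd}-t=L-\sum_i f_i$, and writing the decay as $2^{-\alpha(k-2)}=2^{3\alpha}2^{-\alpha e_i}$ accounts for the $2^{3\alpha s}$ in $2^{s(3\alpha+2)}$. The hypotheses $m>t+3s$ and $s\ge2$ force $L>s$ and, crucially, $f_i\le L<m-3$ throughout $\boldsymbol{F}$, so only regular levels occur, each with exactly two intervals, and the special intervals $d_i\in\{2m-3,2m-2\}$ never enter the counting. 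The key cancellation then appears in $S_2$: the factor $b^{-\alpha(m_{\bsd}-t)}=2^{-\alpha L}2^{\alpha\sum_i f_i}$ annihilates the $2^{-\alpha f_i}$ decay of every active coordinate $i\in u$, so each active coordinate contributes a bounded constant and only the inactive coordinates $i\notin u$ retain a net geometric decay $2^{-(\alpha+1/2)f_i}$.

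The main work, and the main obstacle, is consolidating the combinatorics of $S_2$ into the stated prefactor. After the cancellation, for fixed $u$ with $|u|=v$ I must sum $\binom{L-\sum_i f_i+v}{v-1}$ against $2^s$ interval choices and the inactive geometric weights over the simplex $\sum_i f_i\le L$; summing out the inactive coordinates and convolving the count $\binom{a+v-1}{v-1}$ of active configurations (with $a=\sum_{i\in u}f_i$) against the binomial factor via a Vandermonde-type identity collapses the double sum to order $\binom{L+2v}{2v}$. The dominant subset is $u=S$, where this is $\binom{L+2s}{2s}$, and I would invoke the elementary inequality $\binom{L+2s}{2s}\le\binom{L}{s}^2=\binom{m-t-2s}{s}^2$; the subsets with $v<s$ are of strictly lower order in $L$ and, using $L>s$, are absorbed. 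Collecting the constants from the active cancellation and the two-interval counting yields the prefactor $2^{-\alpha(m-t)}\binom{m-t-2s}{s}^2 2^{s(3\alpha+2)}$ with residual coefficient $2^{-\alpha}$. I would handle $S_1$ in parallel but more crudely: using $\prod_i g(d_i)\le 2^{\alpha s}2^{-\alpha\sum_i f_i}$ together with the one-dimensional $\gamma_\emptyset$-sums (whose per-coordinate value is controlled, via $\alpha\le1$ and $\alpha+1/2\le3/2$, by $1+2^{3/2}$) and the constraint $\sum_i f_i\ge L+1$ defining $\boldsymbol{D}\setminus\boldsymbol{F}$, the resulting tail is of lower order than $S_2$ and is dominated by the same prefactor times $2(1+2^{3/2})^s$. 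Adding the two coefficients gives $2^{-\alpha}+2(1+2^{3/2})^s$, which is the claim. The genuinely delicate point throughout is keeping every $s$-dependent constant pinned down through the Vandermonde collapse and verifying that the lower-order subsets and the crude $S_1$ estimate really do fit under the stated constants.
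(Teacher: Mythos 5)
Your overall architecture matches the paper's proof: specializing Theorem~\ref{thm_boundnet}, the coordinatewise reduction, the cancellation of the growth factors $2^{j/2}$ and $2^{(\alpha+1/2)j}$ against the exponents $\alpha+1/2$ and $2\alpha+1/2$ built into the weights, the identification of $\boldsymbol{F}$ as a simplex condition on the levels, and the geometric tail estimate for the truncation sum $S_1$ are all essentially what the paper does. The divergence, and the gap, lies in how you consolidate $S_2$. The paper does \emph{not} convolve the binomial factor against the count of configurations: it bounds $\binom{m_{\bsd}-t+|u|}{|u|-1}$ uniformly by $\binom{m-t-2s}{s-1}$ (using $m_{\bsd}\le m-3s$ and $|u|\le s$) and multiplies by the raw count $|\boldsymbol{F}|\le 2^{2s}\binom{m-t-2s}{s}$, which directly yields a product of two binomials of the advertised shape. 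Your Vandermonde collapse instead lands on a single binomial of order $\binom{L+2s}{2s}$ with $L=m-t-2s$, and the step you then invoke, $\binom{L+2s}{2s}\le\binom{L}{s}^2$, is false in the permitted parameter range: the hypothesis $m>t+3s$ only guarantees $L\ge s+1$, and for $s=2$, $m=t+7$ one has $L=3$ with $\binom{7}{4}=35>9=\binom{3}{2}^2$. Worse, at $L=s+1$ the ratio $\binom{3s+1}{2s}/\binom{s+1}{s}^2$ grows essentially like $(27/4)^s$, which exceeds the exponential slack $(1+2^{3/2})^s\approx 3.83^s$ available in the stated constant, so the failure cannot be absorbed by bookkeeping. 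Your inequality does become true once $L$ is large compared with $s$ (the ratio tends to $1/\binom{2s}{s}$), so as written your argument proves the corollary only for $m-t$ sufficiently large, not under the stated hypothesis.

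The fix is to abandon the convolution and follow the cruder uniform-bound-times-cardinality route: since every term of $\boldsymbol{F}$ carries the same constant after the full cancellation (the paper discards the extra decay $2^{-(\alpha+1/2)j_i}$ on inactive coordinates rather than exploiting it), one gets $\binom{m-t-2s}{s-1}\cdot 2^{2s}\binom{m-t-2s}{s}$ directly, which is the form the stated bound is built around. The remaining ingredients of your write-up (the role of $2^{3\alpha s}$, the two-intervals-per-level counting, the $S_1$ tail via the analogue of \cite[Lemma~6]{CDP}) are consistent with the paper and would survive this repair.
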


\begin{proof}
We have
\begin{equation*}
\gamma_{i}(J_{i,d_i}) = \left\{\begin{array}{ll} 1 & \mbox{for } d_i = 1,2, \\ (1 + 2^{\alpha j_{i,d_i}})^{-1} & \mbox{for } d_i \ge 3. \end{array} \right.
\end{equation*}
Therefore, for $J_{\bsd} = \prod_{i=1}^s J_{i,d_i}$ we have
\begin{equation*}
\gamma_{u}(J_{\bsd}) \le 2^{- (2\alpha + 1/2) \sum_{i =1}^s j_{i,d_i}}
\end{equation*}
and
\begin{equation*}
\gamma_{\emptyset}(J_{\bsd}) \le 2^{-(\alpha+1/2) \sum_{i=1}^s j_{i,d_i}}.
\end{equation*}

Further we have
\begin{equation*}
\gamma_u(J_{\bsd}) C_{\alpha,|u|} \le 2^{|u|/2} 2^{-\alpha \sum_{i=1}^s j_{i,d_i}}.
\end{equation*}

We have $m_{i,d_i} = m-1 - \lceil d_i/2\rceil$ and hence $m-m_{i,d_i} = 1 + \lceil d_i/2 \rceil$ for $1 \le d_i \le 2(m-2)$. Further we have $m_{i,2m-3} = m_{i,2m-2} = 1$. Thus $m-m_{i,d_i} = \min(1+\lceil d_i/2 \rceil, m-1)$ for $1 \le d_i \le 2m-2$. From $m_{\bsd} \ge t$ it follows that
\begin{equation*}
t \le m_{\bsd} = m- \sum_{i=1}^s (m-m_{i,d_i}) = m-\sum_{i=1}^s \min(1+\lceil d_i/2\rceil, m-1),
\end{equation*}
which implies that
\begin{equation}\label{ineq_ex_rat}
m-t \ge \sum_{i=1}^s \min\left(1 + \lceil d_i/2 \rceil, m-1 \right) = \sum_{i=1}^s \min(j_{i,d_i} + 3,m-1) \le m- t,
\end{equation}
since $m - m_{i,d_i} = 1 + \lceil d_i/2 \rceil = j_{i,d_i} + 3$. Since $m-t\le m$ and $1 + \lceil d_i/2 \rceil \ge 1$ it follows that $j_{i,d_i}+3 \le m-1$ in \eqref{ineq_ex_rat}. Hence
\begin{equation*}
\boldsymbol{F} = \left\{(\bsj, \bsl) \in \boldsymbol{E}: \sum_{i=1}^s j_{i} \le m- t - 3 s \right\}.
\end{equation*}
Since $j_i \ge 0$ for $1 \le i \le s$, it follows that $\boldsymbol{F}$ is empty if $m-t-3s < 0$.

We have
\begin{eqnarray*}
\lefteqn{ e(H_{\alpha,\bsgamma}, Q_{P,\Lambda}) } \\ & \le &  \sum_{(\bsj,\bsl) \in
\boldsymbol{D} \setminus \boldsymbol{F}} 2^{-\alpha |\bsj|_1}
  + \sum_{\emptyset \neq u
\subseteq S} 2^{|u|/2} \sum_{(\bsj,\bsl) \in \boldsymbol{F}}  2^{-\alpha \sum_{i  = 1}^s j_{i}}
2^{-\alpha (m_{\bsd} - t)} \binom{m_{\bsd} - t + |u|}{|u|-1}.
\end{eqnarray*}
For the first sum we have
\begin{eqnarray*}
\sum_{(\bsj,\bsl) \in \boldsymbol{D}\setminus \boldsymbol{F}} 2^{-\alpha |\bsj|_1} & \le & 2^s \sum_{l = m-t-3s  + 1}^\infty 2^{-\alpha l} \sum_{\bsj \in \mathbb{Z}^s, |\bsj|_1 = l} 1 \\ & \le & 2^s \sum_{l=m-t-3s+1}^\infty 2^{-\alpha l} \binom{l + s-1}{s-1} \\ & \le & 2^{-\alpha (m-t)} 2^{s(2+3\alpha) -  \alpha} \binom{m-t-2s}{s-1},
\end{eqnarray*}
where we used \cite[Lemma~6]{CDP}.

Now we consider the second sum. First note that
\begin{equation*}
m_{\bsd} = m- \sum_{i=1}^s (j_{i,d_i} + 3) \le m - 3s.
\end{equation*}
Hence we have
\begin{eqnarray*}
\lefteqn{ \sum_{(\bsj,\bsl) \in \boldsymbol{F}} 2^{-\alpha \sum_{i\in u} j_i} 2^{-\alpha (m_{\bsd}-t)} \binom{m_{\bsd} - t + |u|}{|u| - 1} } \\ & \le  & 2^{-\alpha (m-t)} \sum_{(\bsj,\bsl) \in \boldsymbol{F}} 2^{-\alpha \sum_{i = 1}^s j_i} 2^{\alpha \sum_{i=1}^s (j_i+3 )} \binom{m- t - 3s + |u|}{|u|-1} \\ & \le & 2^{-\alpha (m-t)} 2^{3 \alpha s } \binom{m-t -2s}{s-1} |\boldsymbol{F}|.
\end{eqnarray*}
We have
\begin{equation*}
|\boldsymbol{F}| \le 2^{2s} \sum_{l=0}^{m-t-3s} \binom{l+s-1}{s-1} = 2^{2s} \binom{m-t-2s}{s}.
\end{equation*}
Thus we obtain
\begin{eqnarray*}
\lefteqn{ e(H_{\alpha,\bsgamma}, Q_{P,\Lambda})  } \\ & \le & 2^{-\alpha (m-t)} 2^{s(3\alpha + 2) - \alpha} \binom{m-t-2s}{s-1} + 2^{-\alpha (m-t)} 2^{s(3 \alpha + 2) + 1} \binom{m-t-2s}{s}^2 \sum_{\emptyset \neq u \subseteq S} 2^{|u|/2} \\ & \le & 2^{-\alpha (m-t)} \binom{m-t-2s}{s}^2 2^{s(3\alpha+2)} \left(2^{- \alpha} + 2 (1 + 2^{1/2})^s \right)
\end{eqnarray*}
and hence the result follows.
\end{proof}


\subsection{Exponential decay of the weight parameters}\label{sec_examples}

Now we consider numerical integration of functions $f:\mathbb{R}^s \to \mathbb{R}$ where the functions and the modulus of continuity decay exponentially fast.

Let the local smoothness parameters $\bsalpha$ be given by
\begin{equation*}
\alpha_u(\bsx) = \alpha,
\end{equation*}
for all $\emptyset \neq u \subseteq S$, where $1/2 < \alpha \le 1$.

For $J = \prod_{i=1}^s [a_i, b_i)$ let $\gamma_i(J) = 2^{-\min(|a_i|,|b_i|)}$ and
\begin{equation*}
\gamma_u(J) = \prod_{i =1}^s \gamma_i(J)
\end{equation*}
for $u \subseteq S$.

Let $b = 2$ and $m_l = m-1 - \lceil l/2 \rceil$ for $1 \le l \le 2(m-2)$ and $m_{2m-3} = m_{2m-2} = 1$. Then
\begin{equation*}
\sum_{l=1}^{2m-2} 2^{m_l} = 2[1 + 1 + 2 + 2^2 + \cdots + 2^{m-2}] = 2^m.
\end{equation*}

For $1 \le i \le s$ and $d_i \in \mathbb{Z}$ let $j_{i,d_i} = 0$,
\begin{equation*}
l_{i,d_i} = \left\{\begin{array}{ll} (d_i-1)/2 & \mbox{for } d_i \ge 1, d_i \mbox{ odd}, \\ - d_i/2 & \mbox{if } d_i \ge 2, d_i \mbox{ even}, \end{array} \right.
\end{equation*}
and
\begin{equation*}
J_{i,d_i} = [l_{i,d_i},  (l_{i,d_i} + 1)).
\end{equation*}
The intervals $J_{i,1}, J_{i,2}, J_{i,3}, \ldots$ form a partition of $\mathbb{R}$. Let $$E_i = \{(j_{i,d_i}, l_{i,d_i}): 1 \le d_i \le 2m-2, 1 \le i \le s\}$$ and $\boldsymbol{E} = \prod_{i=1}^s E_i$. Let $$D_i = \{(j_{i,d_i}, l_{i,d_i}): d_i \ge 1, 1 \le i \le s\}$$ and $\boldsymbol{D} = \prod_{i=1}^s D_i$.

Let $P$ be the point set constructed according to Section~\ref{sec_construction} based on a digital $(t,m,s)$-net over $\mathbb{Z}_2$.

\begin{corollary}\label{cor_expdecay}
Let $P$ be constructed using the scheme of Section~\ref{sec_construction} based on a digital $(t,m,s)$-net over $\mathbb{Z}_2$ where $m > t + 2s$ and $s \ge 2$.
With the choice of parameters as above we have
\begin{equation*}
e(H_{\bsalpha,\bsgamma}, Q_{P,\Lambda}) \le 2^{3s-1} 2^{-(m-t)} \binom{m-t}{s-1} + 2^{s(2\alpha + 1)} 2^{-\alpha (m-t)} \binom{m-t-s}{s}^2.
\end{equation*}
\end{corollary}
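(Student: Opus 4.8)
The plan is to specialize Theorem~\ref{thm_boundnet} to the present choice of parameters and then estimate the resulting two sums exactly as in the proof of Corollary~\ref{cor_polydecay}, the only genuine difference being the bookkeeping forced by the exponential (rather than rational) weights. The first step is to record the relevant quantities for each subcube $J_{\bsd}=\prod_{i=1}^s J_{i,d_i}$. Since $j_{i,d_i}=0$ throughout, writing $n_i=\min(|l_{i,d_i}|,|l_{i,d_i}+1|)=\lceil d_i/2\rceil-1$ one checks directly that $\gamma_u(J_{\bsd})=\gamma_{\emptyset}(J_{\bsd})=2^{-\sum_{i=1}^s n_i}$ for every $u\subseteq S$, that $C_{\alpha,|u|}=2^{|u|/2}2^{1/2-\alpha}$ (the factors $(b-1)^{\ldots}$ and $b^{(\alpha+1/2)\sum_{i\in u}j_i}$ are both $1$ because $b=2$ and all $j_i=0$), and that $m-m_{i,d_i}=1+\lceil d_i/2\rceil=n_i+2$, so that $m_{\bsd}=m-2s-\sum_{i=1}^s n_i$. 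In particular the condition $m_{\bsd}\ge t$ defining $\boldsymbol{F}$ becomes simply $\sum_{i=1}^s n_i\le m-t-2s$, which is empty precisely when $m-t-2s<0$; this is why the hypothesis $m>t+2s$ appears.

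Next I would count tiles by the value $L=\sum_{i=1}^s n_i$. In each coordinate the value $n_i=v$ is attained for exactly the two indices $d_i\in\{2v+1,2v+2\}$ (equivalently $l_{i,d_i}\in\{v,-v-1\}$), so the number of $\bsd$ with $\sum_i n_i=L$ equals $2^s\binom{L+s-1}{s-1}$. Summing this over $0\le L\le m-t-2s$ and using the hockey-stick identity gives the clean value $|\boldsymbol{F}|=2^s\binom{m-t-s}{s}$, which supplies one of the two binomial factors in the second term of the claimed bound.

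For the truncation term $\sum_{(\bsj,\bsl)\in\boldsymbol{D}\setminus\boldsymbol{F}}\gamma_{\emptyset}(J)$ (the factor $b^{(j_1+\cdots+j_s)/2}$ is $1$), I would write it as $2^s\sum_{L>m-t-2s}\binom{L+s-1}{s-1}2^{-L}$ and bound the geometric tail by \cite[Lemma~6]{CDP}, exactly the estimate used for the first sum in Corollary~\ref{cor_polydecay} with decay rate $2^{-1}$ in place of $2^{-\alpha}$; this produces the first term $2^{3s-1}2^{-(m-t)}\binom{m-t}{s-1}$. For the quadrature term I would first carry out the cancellation $\gamma_u(J)\,2^{-\alpha(m_{\bsd}-t)}=2^{-\alpha(m-t)}2^{2\alpha s}2^{-(1-\alpha)\sum_i n_i}$, discard $2^{-(1-\alpha)\sum_i n_i}\le1$ since $\alpha\le1$, bound $\binom{m_{\bsd}-t+|u|}{|u|-1}\le\binom{m-t-s}{s}$ using $m_{\bsd}\le m-2s$ and $|u|\le s$, and then replace $\sum_{(\bsj,\bsl)\in\boldsymbol{F}}1$ by $|\boldsymbol{F}|=2^s\binom{m-t-s}{s}$. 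Collecting the factors $2^{|u|/2}2^{1/2-\alpha}$ from $C_{\alpha,|u|}$ and summing over $\emptyset\neq u\subseteq S$ yields the second term $2^{s(2\alpha+1)}2^{-\alpha(m-t)}\binom{m-t-s}{s}^2$.

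The conceptual content is entirely in the first two paragraphs; the main obstacle is purely careful constant tracking. The delicate point is that, unlike the polynomial example where the substitution $m_{\bsd}-t=m-t-\sum(j_i+3)$ makes the location-dependent factors cancel completely and leaves only $|\boldsymbol{F}|$, here the exponential factor $2^{-(1-\alpha)\sum_i n_i}$ survives the cancellation; one must check that discarding it (legitimate since $0<\alpha\le1$) does not destroy the estimate, and that the two separate tail/counting estimates really recombine into the stated powers of $2$ and the two binomial coefficients. Matching these to the precise constants $2^{3s-1}$ and $2^{s(2\alpha+1)}$, and in particular extracting the binomial arguments $\binom{m-t}{s-1}$ and $\binom{m-t-s}{s}^2$ from \cite[Lemma~6]{CDP} and the hockey-stick identity, is the only place where real care is needed.
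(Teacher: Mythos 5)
Your proposal follows essentially the same route as the paper's own proof: specialize Theorem~\ref{thm_boundnet}, note that all $j_{i,d_i}=0$ so that $C_{\alpha,|u|}\le 2^{|u|/2}$ and $\gamma_u(J_{\bsd})=2^{-\sum_i n_i}$, identify $\boldsymbol{F}$ via $\sum_i n_i\le m-t-2s$ (whence the hypothesis $m>t+2s$), count $|\boldsymbol{F}|\le 2^s\binom{m-t-s}{s}$ by the hockey-stick identity, and handle the truncation tail and the quadrature sum with \cite[Lemma~6]{CDP} after the cancellation $\gamma_u(J)\,2^{-\alpha(m_{\bsd}-t)}\le 2^{2\alpha s}2^{-\alpha(m-t)}$. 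The only deviation is cosmetic bookkeeping in the truncation term, where you index the tail by $\sum_i n_i$ while the paper indexes it by $\sum_i(n_i+1)$, which is how it arrives at the particular constants $2^{3s-1}$ and $\binom{m-t}{s-1}$; the final absorption of $\sum_{\emptyset\neq u}2^{|u|/2}$ into $2^{s(2\alpha+1)}\binom{m-t-s}{s}^2$ is carried out with the same level of precision in both arguments.
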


\begin{proof}
We have
\begin{equation*}
\gamma_{i}(J_{i,d_i}) = 2^{- \min(|l_{i,d_i}|, |l_{i,d_i}+1|)}.
\end{equation*}
Therefore, for $u \subseteq S$ and $J_{\bsd} = \prod_{i=1}^s J_{i,d_i}$ we have
\begin{equation*}
\gamma_{u}(J_{\bsd}) = 2^{-\sum_{i=1}^s \min(|l_{i,d_i}|, |l_{i,d_i}+1|}.
\end{equation*}

Further we have
\begin{equation*}
C_{\alpha,|u|} \le 2^{|u|/2}.
\end{equation*}

We have $m_{i,d_i} = m-1 - \lceil d_i/2\rceil$ and hence $m-m_{i,d_i} = 1 + \lceil d_i/2 \rceil$ for $1 \le d_i \le 2(m-2)$ and $m_{i,2m-3} = m_{i,2m-2} = 1$. Thus $m-m_{i,d_i} = \min(1+\lceil d_i/2 \rceil, m-1)$ for $1 \le d_i \le 2m-2$. From $m_{\bsd} \ge t$ it follows that
\begin{equation*}
t \le m_{\bsd} = m-\sum_{i=1}^s (m-m_{i,d_i}) = m-\sum_{i=1}^s \min(1+\lceil d_i/2\rceil, m-1),
\end{equation*}
which implies that
\begin{equation}\label{ineq_ex_exp}
m-t \ge \sum_{i=1}^s \min\left(1 + \lceil d_i/2 \rceil,m-1 \right) = \sum_{i=1}^s \min(1+ \max(|l_{i,d_i}|, |l_{i,d_i}+1|), m-1),
\end{equation}
since $\lceil d_i/2 \rceil = l_{i,d_i}+1$ for $d_i$ odd and $\lceil d_i/2 \rceil = - l_{i,d_i}$ for $d_i$ even. Since $m-t \le m$ and $1+\max(|l_{i,d_i}|, |l_{i,d_i}+1|) \ge 1$, it follows that $1+\max(|l_{i,d_i}|, |l_{i,d_i}+1|) \le m-1$ in \eqref{ineq_ex_exp}. Hence
\begin{equation*}
\boldsymbol{F} = \left\{(\bsj,\bsl) \in \boldsymbol{E}: \sum_{i=1}^s \max(|l_{i,d_i}|, |l_{i,d_i}+1|) \le m- t-s \right\}.
\end{equation*}

Therefore we have
\begin{eqnarray*}
\lefteqn{ e(H_{\alpha,\bsgamma}, Q_{P,\Lambda})  \le  \sum_{(\bsj,\bsl) \in
\boldsymbol{D} \setminus \boldsymbol{F}} 2^{- \sum_{i=1}^s \min(|l_{i,d_i}|, |l_{i,d_i}+1|)} } \\ &&  + \sum_{\emptyset \neq u
\subseteq S} 2^{|u|/2} \sum_{(\bsj,\bsl) \in \boldsymbol{F}} 2^{- \sum_{i=1}^s \min(|l_{i,d_i}|, |l_{i,d_i}+1|)}  2^{-\alpha (m_{\bsd} - t)} \binom{m_{\bsd} - t + |u|}{|u|-1}.
\end{eqnarray*}
For the first sum we have
\begin{eqnarray*}
\sum_{(\bsj,\bsl) \in
\boldsymbol{D} \setminus \boldsymbol{F}}  2^{- \sum_{i=1}^s \min(|l_{i,d_i}|, |l_{i,d_i}+1|)}  & \le & 2^{s} \sum_{l=m-t-s+1}^\infty 2^{-l} \binom{l+s-1}{s-1} \\ & \le & 2^{3s -1} 2^{-(m-t)} \binom{m-t}{s-1},
\end{eqnarray*}
where we used \cite[Lemma~6]{CDP}.

Now we consider the second sum. First note that
\begin{equation*}
m_{\bsd} = m- \sum_{i=1}^s \min(1 + \lceil d_i/2 \rceil,m-1) \le m - 2s.
\end{equation*}
Hence we have
\begin{eqnarray*}
\lefteqn{\sum_{(\bsj,\bsl) \in \boldsymbol{F}} 2^{-\sum_{i=1}^s \min(|l_{i,d_i}|, |l_{i,d_i}+1|)}  2^{-\alpha (m_{\bsd} - t)} \binom{m_{\bsd} - t + |u|}{|u|-1}} \\ & \le &  2^{-\alpha (m-t)} \sum_{(\bsj,\bsl) \in \boldsymbol{F}} 2^{- \sum_{i=1}^s \min(|l_{i,d_i}|, |l_{i,d_i}+1|)} 2^{\alpha \sum_{i=1}^s (1 + \max(|l_{i,d_i}|, |l_{i,d_i}+1|))} \binom{m-t-s}{s-1} \\ & \le & 2^{-\alpha(m-t)} 2^{2\alpha s} \binom{m-t-s}{s-1} |\boldsymbol{F}|,
\end{eqnarray*}
as $-\min(|l_{i,d_i}|, |l_{i,d_i}+1|) + \alpha \max(|l_{i,d_i}|, |l_{i,d_i}+1|) \le \alpha$.

We have
\begin{equation*}
|\boldsymbol{F}| \le 2^s \sum_{l=0}^{m-t-2s} \binom{l+s-1}{s-1} \le 2^s \binom{m-t-s}{s}.
\end{equation*}
Thus we obtain
\begin{eqnarray*}
\lefteqn{ e(H_{\alpha,\bsgamma}, Q_{P,\Lambda})  } \\ & \le & 2^{3s-1} 2^{-(m-t)} \binom{m-t}{s-1} + 2^{s(2\alpha + 1)} 2^{-\alpha (m-t)} \binom{m-t-s}{s}^2
\end{eqnarray*}
and hence the result follows.
\end{proof}


\section{Numerical result}\label{sec_numresult}

In this section we present some numerical results. We tested the
algorithm for the approximation of the integral
\begin{equation*}
I = \int_{-\infty}^\infty \int_{-\infty}^\infty
\int_{-\infty}^\infty \mathrm{e}^{2\sqrt{\pi} (x+y+z)}
\mathrm{e}^{-\pi (x^2+y^2+z^2)} \,\mathrm{d} x \,\mathrm{d} y
\,\mathrm{d} z = \left(e \int_{-\infty}^\infty
\mathrm{e}^{-(1-\sqrt{\pi} x)^2} \,\mathrm{d} x\right)^3 =
\mathrm{e}^3.
\end{equation*}

As underlying digital net we use the first $2^m$ points of a Sobol
sequence. As in the examples in Subsection~\ref{subsec_example2} and
\ref{sec_examples}, we set $m_l= m-1-\lceil l/2 \rceil$ for $1 \le l
\le 2(m-2)$ and $m_{2m-3} = m_{2m-2}=1$. Then
\begin{equation*}
\sum_{l=1}^{2m-2} 2^{m_l} = 2[1 + 1 + 2 + 2^2 + \cdots + 2^{m-2}] =
2^m.
\end{equation*}

The partitioning of the real line can be done in different ways. For
instance, let
\begin{equation*}
a_l = \mathrm{erfinv}(1-2^{-l})*X \quad \mbox{for } 0 \le l < m,
\end{equation*}
where $\mathrm{erfinv}$ denotes the inverse error function. The number $X$ determines the size of the region where the integrand will be estimated. In the numerical experiments below we consider two values for $X$. Then for
$1 \le l \le m-1$ set
\begin{equation*}
J_{2l-1} = [a_{l-1}, a_l)
\end{equation*}
and
\begin{equation*}
J_{2l} = [-a_l, -a_{l-1}).
\end{equation*}
In each interval $J_l$ we define $2^{m_l}$ equally spaced and
left-centered points, which we use as the one dimensional
projections.

If a point $\bsx_n \in \prod_{i=1}^3 J_{l_i}$, then the
corresponding weight $\lambda_n$ is given by
\begin{equation*}
\lambda_n = \prod_{i=1}^s (a_{l_i}-a_{l_i-1}) N^{-1}_{l_1,l_2,l_3},
\end{equation*}
where $N_{l_1,l_2,l_3}$ is the number of points in the interval
$\prod_{i=1}^s [a_{l_i}, a_{l_i-1})$. (Note that if $N_{l_1,l_2,l_3}
< 2^t$, then the corresponding weight can be arbitrarily chosen in
the interval $[0,\prod_{i=1}^s (a_{l_i}-a_{l_i-1})
N^{-1}_{l_1,l_2,l_3}]$.)

A Matlab implementation of this algorithm can be found at
\begin{quote} \textrm{http://quasirandomideas.wordpress.com/2010/11/11/qmc-rules-over-rs-matlab-code-and-numerical-example/}
\end{quote}

We compare the result using the proposed algorithm with using the inverse normal cumulative distribution function, that is, we write the integral as
\begin{equation*}
I = \int_0^1 \int_0^1 \int_0^1 \mathrm{e}^{2 \sqrt{\pi} [\mathrm{erfinv}(2x - 1) + \mathrm{erfinv}(2y-1) + \mathrm{erfinv}(2z-1)]} \,\mathrm{d} x \,\mathrm{d} y \,\mathrm{d} z.
\end{equation*}
We approximate $I$ by
\begin{equation*}
Q = \frac{1}{N} \sum_{n=0}^{N-1} \mathrm{e}^{2 \sqrt{\pi} [\mathrm{erfinv}(2x_n - 1) + \mathrm{erfinv}(2y_n-1) + \mathrm{erfinv}(2z_n-1)]},
\end{equation*}
where $\bsx_n = (x_n,y_n,z_n) \in [0,1)$, $0 \le n < N$ are the quadrature points.

We use the first $2^m$ points of a Sobol sequence as underlying digital net in both cases. 

The errors and computation times are listed in Table~\ref{table1}. There, e(Rs) denotes the error using the proposed method, e(invcom) denotes the error using the inverse normal cumulative distribution function, t(Rs) denotes the time required for the computation using the proposed method in seconds, and t(invcom) denotes the time required using the inverse normal cumulative distribution function in seconds.

The speedup in terms of the computation time for the proposed algorithm comes from the fact that the computation of the inverse normal cumulative distribution function is computationally intensive and the fact that this step is not needed in our method, but is needed in the comparison method. In problems where computing the inverse cumulative distribution function is not a significant factor, then one can expect the computation times to be more similar to each other.

The integration error is also significantly lower using the proposed method. There is however a dependence on the parameter $X$. This value determines the size of the sample space, which is an important ingredient in the computation. Using a different partitioning numbers $a_0,\ldots, a_{m-1}$ could yield even better results. In high dimensions, say $s > 5$, it is likely to be advisable to use a different partitioning for coordinate indices beyond, say, $5$. A complete investigation of good choices of partitionings for high dimensional problems is left for future work.

\begin{table}
\begin{center}
\begin{tabular}{l|lll|lll}
m & e(Rs) $X=6$ & e(Rs) $X=12$ & e(invcom) & t(Rs) $X=6$ & t(Rs) $X=12$ & t(invcom) \\ \hline
13 & 0.139001 & 1.970174 & 0.699538 & 0.031 & 0.032 & 0.289 \\ 
14 & 0.232291 & 5.566163 & 0.290106 & 0.035 & 0.036 & 0.575 \\ 
15 & 0.216679 & 0.828577 & 0.380826 & 0.070 & 0.071 & 1.257 \\ 
16 & 0.015490 & 0.233993 & 0.398023 & 0.140 & 0.140 & 2.414 \\ 
17 & 0.072803 & 0.408627 & 0.323633 & 0.282 & 0.283 & 4.732 \\ 
18 & 0.024119 & 0.114013 & 0.298877 & 0.587 & 0.587 & 9.752 \\ 
19 & 0.026249 & 0.064150 & 0.141170 & 1.308 & 1.315 & 19.460 \\ 
20 & 0.000056 & 0.115068 & 0.144152 & 2.878 & 2.946 & 37.755 \\ 
21 & 0.000002 & 0.003248 & 0.120121 & 6.229 & 5.970 & 75.6635 \\
22 & 0.000199 & 0.002157 & 0.096750 & 12.250 & 12.538 & 153.900 \\ \hline
\end{tabular}
\end{center}
\label{table1}
\end{table}

\end{document}